\def\smallskip{\vskip\smallskipamount}
\def\medskip{\vskip\medskipamount}
\def\bigskip{\vskip\bigskipamount}
\newtheoremstyle{thmstyle}{}{}{\itshape}{}{\bfseries}{ }{5pt}{}
\newtheoremstyle{exstyle}{}{}{}{}{\bfseries}{ }{5pt}{}
\newtheoremstyle{defstyle}{}{}{}{}{\bfseries}{ }{5pt}{}
\newtheoremstyle{remstyle}{}{}{}{}{\bfseries}{ }{5pt}{}
\theoremstyle{thmstyle}
\newtheorem{thm}{Theorem}[section]
\newtheorem{theorem}[thm]{Theorem}
\newtheorem*{theorem*}{Theorem}
\newtheorem{lemma}[thm]{Lemma}
\newtheorem{proposition}[thm]{Proposition}
\newtheorem{corollary}[thm]{Corollary}
\newtheorem*{definition*}{Definition}
\newtheorem*{corollary*}{Corollary}
\theoremstyle{exstyle}
\newtheorem{example}[thm]{Example}
\theoremstyle{defstyle}
\newtheorem{definition}[thm]{Definition}
\newtheorem{def-prop}[thm]{Definition-Proposition}
\newtheorem{def-lem}[thm]{Definition-Lemma}
\newtheorem{rem-convention}[thm]{Remark-Convention}
\newtheorem{def-note}[thm]{Definition-Notation}
\theoremstyle{remstyle}
\newtheorem{remark}[thm]{Remark}
\theoremstyle{remstyle}
\newcommand{\ZZ}{\mathbb Z}
\newcommand{\M}{\mathcal{M}}
\newcommand{\C}{\mathcal{C}}
\newcommand{\X}{\mathcal{X}}
\newcommand{\Y}{\mathcal{Y}}
\newcommand{\T}{\mathcal{T}}
\newcommand{\F}{\mathcal{F}}
\newcommand{\D}{\mathcal{D}}
\newcommand{\U}{\mathcal{U}}
\newcommand{\V}{\mathcal{V}}
\newcommand{\Ho}{\mathrm{H}}
\newcommand{\K}{\operatorname{K}}
\newcommand{\g}{\mathbf g}
\newcommand{\s}{\mathfrak s}
\newcommand{\Eb}{\mathbb{E}}
\newcommand{\La}{\Lambda}
\newcommand{\add}{\operatorname{add}}
\newcommand{\tors}{\operatorname{tors}}
\newcommand{\ftors}{\operatorname{f-tors}}
\newcommand{\lwide}{\operatorname{l-wide}}
\newcommand{\fptors}{\operatorname{(f,p)-tors}}
\newcommand{\ptors}{\operatorname{p-tors}}
\newcommand{\ithick}{\operatorname{inj-thick}}
\newcommand{\xtors}{\operatorname{x-tors}}
\newcommand{\ytors}{\operatorname{y-tors}}
\newcommand{\covtors}{\operatorname{cov-tors}}
\newcommand{\contrtors}{\operatorname{contr-tors}}
\newcommand{\thick}{\operatorname{thick}}
\newcommand{\xytors}{\operatorname{(x,y)-tors}}
\newcommand{\cotors}{\operatorname{cotors}}
\newcommand{\ccotors}{\operatorname{c-cotors}}
\newcommand{\hcotors}{\operatorname{h-cotors}}
\newcommand{\contrhcotors}{\operatorname{(contr,h)-cotors}}
\newcommand{\sttilt}{\operatorname{s\tau-tilt}}
\newcommand{\chcotors}{\operatorname{(c,h)-cotors}}
\newcommand{\stors}{\operatorname{stors}}
\newcommand{\silt}{\operatorname{silt}}
\newcommand{\proj}{\operatorname{proj}}
\newcommand{\inj}{\operatorname{inj}}
\newcommand{\mmod}{\operatorname{mod}}
\newcommand{\Ext}{\mathrm{Ext}}
\newcommand{\Hom}{\mathrm{Hom}}
\newcommand{\Ker}[1]{\mathrm{Ker}(#1)}
\newcommand{\doublewidetilde}[1]{{%
  \mathpalette\double@widetilde{#1}%
}}
\newcommand{\double@widetilde}[2]{%
  \sbox\z@{$\m@th#1\widetilde{#2}$}%
  \ht\z@=.9\ht\z@
  \widetilde{\box\z@}%
}
\newcommand{\Address}{{% additional braces for segregating \footnotesize
  \bigskip
  \footnotesize

  E.~Gupta, \textsc{Université Paris-Saclay, UVSQ, CNRS, Laboratoire de Mathématiques de Versailles, 78000, Versailles, France.}\par\nopagebreak
  \textit{E-mail address}: \texttt{esha.gupta2@uvsq.fr}
}}
\title[Silting objects and (co) torsion pairs in truncated derived categories]{On $d$-term silting objects and (co) torsion pairs in truncated derived categories}
\author{Esha Gupta}
\date{}
\begin{document}
\begin{abstract}
    For a finite-dimensional algebra $\La$ over an algebraically closed field $K$, it is known that the poset of $2$-term silting objects in $\K^b(\proj\La)$ is isomorphic to the poset of functorially finite torsion classes in $\mmod\La$, and to that of complete cotorsion classes in $\K^{[-1,0]}(\proj\La)$. In this work, we generalise this result to the case of $d$-term silting objects for arbitrary $d\geq 2$ by introducing the notion of torsion classes for extriangulated categories. In particular, we show that the poset of $d$-term silting objects in $\K^b(\proj\La)$ is isomorphic to the poset of complete and hereditary cotorsion classes in $\K^{[-d+1,0]}(\proj\La)$, and to that of positive and functorially finite torsion classes in $\D^{[-d+2,0]}(\mmod\La)$, an extension-closed subcategory of $\D^b(\mmod\La)$. We further show that the posets $\cotors\K^{[-d+1,0]}(\proj\La)$ and $\tors \D^{[-d+2,0]}(\mmod\La)$ are lattices, and that the truncation functor $\tau_{\geq -d+2}$ gives an isomorphism between the two.
\end{abstract}
\maketitle

\tableofcontents{}

\section{Introduction}
The theory of $\tau$-tilting was first introduced in \cite{AIR} as a mutation completion of tilting theory. In that seminal work, the authors provided isomorphisms between the following posets for an arbitrary finite-dimensional algebra $\La$.
\begin{enumerate}
    \item Isomorphism classes of basic support $\tau$-tilting modules in $\mmod\La$.
    \item Functorially finite torsion pairs in $\mmod\La$.
    \item Isomorphism classes of basic $2$-term silting objects in $\K^b(\proj\La)$.
\end{enumerate}

For path algebras of acyclic quivers, it was proven in \cite{IT} that finitely generated torsion classes in $\mmod\La$ are in bijection with finitely generated wide subcategories of $\mmod\La$. This was generalized in \cite{MS} to all finite-dimensional algebras, adding the following class to the above list. 
\begin{enumerate}
    \item [(4)] Left finite wide subcategories of $\mmod \La$.
\end{enumerate}

In \cite{G1}, the author interpreted the class of thick subcategories as the `mirror' of wide subcategories, adding the following class to the mix.
\begin{enumerate}
    \item [(5)] Thick subcategories in $\K^{[-1,0]}(\proj \La)$ with enough injectives.
\end{enumerate}

Finally, in \cite{PZ}, the authors gave a map from the poset of torsion classes in $\mmod\La$ and the poset of cotorsion pairs in $\K^{[-1,0]}(\proj\La)$, which restricts to an isomorphism of the above posets with
\begin{enumerate}
    \item [(6)] Complete cotorsion pairs in $\K^{[-1,0]}(\proj\La)$. 
\end{enumerate}
Recently, in \cite{G2}, the same map was shown to provide a bijection between all torsion pairs in $\mmod\La$ and all cotorsion pairs in $\K^{[-1,0]}(\proj\La)$. The proof of this fact relies on the following equivalence of categories. 
\begin{equation}\label{H0}
    \Ho^0:\frac{\K^{[-1,0]}(\proj\La)}{\Sigma \proj\La}\to \mmod\La
\end{equation} 

The above isomorphisms can be summarized in the diagram below, where $\K_\La=\K^{[-1,0]}(\proj\La)$.
\begin{figure}[h!]
\captionsetup{labelformat=empty}
\centering
\begin{tikzcd}
{}&{}& {2\mbox{-}\silt \K_\La} \arrow[rd,"\textrm{\cite{G1}}",sloped]\arrow[d,no head] & {}& {}  \\
{\K_\La}& {c\mbox{-}\cotors \K_\La} \arrow[ru] \arrow[rr] \arrow[ddd,"\textrm{\cite{PZ}}"] & {}\arrow[d, no head]& {\ithick \K_\La} \arrow[ddd, "\textrm{\cite{G1}}"] & {}\\
{} \arrow[r, no head, dashed] & {} \arrow[rr, no head, dashed]  & {} \arrow[d, "\textrm{\cite{AIR}}"] & {} \arrow[r, no head, dashed] & {} \\
{}& {}& {\sttilt\La} \arrow[rd, "\textrm{\cite{MS}}", sloped] \arrow[ld, "\textrm{\cite{AIR}}", sloped] & {}& {\mmod\La} \\
{}& \ftors\La \arrow[rr,"\textrm{\cite{MS}}"]& {} & \lwide\La &  {}
\end{tikzcd}  
\caption{Figure taken from \cite{G1}}
\end{figure}
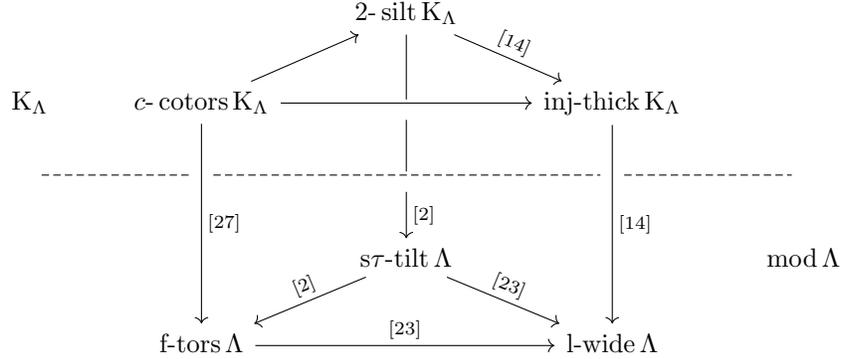

Given the variety of families in bijection with $2$-term silting objects in $\K^b(\proj\La)$, it is natural to consider if the poset of $d$-term silting objects in $\K^b(\proj\La)$ enjoys similar properties for arbitrary $d\geq 2$. It was proven in \cite[Corollary~3.4]{IJ} that this poset is isomorphic to the poset of simple-minded collections in $\D^b(\mmod\La)$ lying in $\D^{[-d+1,0]}(\mmod\La)$. Furthermore, they show that if $\La$ is hereditary of Dynkin type, silting objects with cohomology concentrated in $[-d+1,0]$ are in bijection with $d$-simple-minded systems in the $(-d)$-cluster category $\mathcal{C}_{-d}(\La)$, which are then proven to be counted by the positive Fuss-Catalan number $C^+_d(W)$, where $W$ is the Weyl group of $\La$ \cite[Theorem~1.1, 1.2]{IJ}. 
\medskip
For an acyclic quiver $Q$, it was shown in \cite[Theorem~5.14]{KQ} that the hearts of $\D^b(\mmod kQ)$ lying between the canonical heart $\mathcal{H}_Q=\mmod kQ$ and $\mathcal{H}_Q[d-1]$ are in bijection with basic $d$-term silting complexes as well as $d$-cluster tilting sets.

\medskip
In \cite{AHJKPT2}\footnote{This is a work in progress building up on the previous work \cite{AHJKPT1} of the same authors, and whose results were announced at the online talk https://www.fd-seminar.xyz/talks/2023-05-04/}, the authors study the subset of $d$-term silting objects whose cohomology is concentrated in degrees $0$ and $-d+1$ using $(d-1)$-torsion classes in a $(d-1)$-cluster tilting subcategory $\mathcal{M}$ of $\mmod\La$. In particular, they show that the set of functorially finite $(d-1)$-torsion classes in $\mathcal{M}$ injects into this set.

\medskip
Furthermore, for Dynkin types, $d$-term silting objects were shown to be in bijection with several combinatorial objects in \cite{STW}, including $d$-eralized clusters and $d$-eralized noncrossing partitions. These objects were then shown to be counted by the Fuss-Catalan numbers of type $W$, where $W$ is the Coxeter group associated to $\La$.

In this work, we start by generalising the notion of torsion pairs for abelian categories \cite{D} and triangulated categories \cite{BR, IJ} to extriangulated categories with an additional structure that plays the role of extension spaces in all integer degrees satisfying some compatibility conditions. This is axiomatised in the definition of a \emph{bivariant $\delta$-functor} in \cite[Definition~4.5]{GNP}. Although the definition of a $\delta$-functor is more general, for simplification, we restrict ourselves to the case where the degree $0$ extensions are given by the $\Hom$-functors while the degree $1$ extensions coincide with the extriangulated structure. We also introduce the notion of positive torsion pairs for such categories. For extriangulated categories equipped with negative first extensions, there exists the notion of $s$-torsion classes as introduced in \cite{AET}, and we provide a comparison between these two notions in Theorem \ref{bijections}. 

Recall that for an extriangulated category $(\C,\Eb,\s)$, for any $\Eb$-extension $\delta\in \Eb(C,A)$, there are canonical natural transformations $A_\delta:\Hom(-,C)\to \Eb(-,A)$ and $B_\delta:\Hom(A,-)\to \Eb(C,-)$ \cite[Definition~3.1]{NP}. 

\begin{definition*}(Definition \ref{torclass})
    Let $(\C,\Eb,\s)$ be an extriangulated category equipped with a bivariant $\delta$-functor $(G^\bullet, \delta_\#, \delta^\#)$ such that $G^0=\Hom_\C$, $G^1=\Eb$, and for any $\Eb$-extension $\delta\in \Eb(C,A)$, $\delta^0_\#=A_\delta$ and $\delta^{\#,0}=B_\delta$. Let $\T,\F$ be full subcategories of $\C$. 
    \begin{enumerate}
        \item The pair $(\T,\F)$ is called a \emph{torsion pair} if 
        \begin{enumerate}
        \item $\Hom(t,\F)=0$ if and only if $t\in\T$;
        \item $\Hom(\T,f)=0$ if and only if $f\in\F$.
        \end{enumerate}
        \item A torsion pair $(\T,\F)$ is called \emph{positive} if $G^{k}(\T,\F)=0$ for all $k\leq -1$.
        \end{enumerate}
\end{definition*}

In order to relate this notion to $d$-term silting objects, the main step is finding a `higher' generalization of Equation \ref{H0}. It is natural to replace $\K^{[-1,0]}(\proj\La)$ by $\K^{[-d+1,0]}(\proj\La)$, and the key step is replacing $\mmod\La$ by $\D^{[-d+2,0]}(\mmod\La)$. These two categories are then related by the truncation functor $\tau_{\geq -d+2}$ and we have the following equivalence (Proposition \ref{equiv}).
\begin{equation}\label{tau_d}
    \tau_{\geq -d+2}:\frac{\K^{[-d+1,0]}(\proj\La)}{\Sigma^{d-1} \proj\La}\to \D^{[-d+2,0]}(\mmod\La)
\end{equation}

Our main results are the following. 

\begin{theorem*}(\ref{bijections})
    The map $\Phi$ defined as $(\X,\Y)\mapsto (\tau_{\geq -d+2}\Y,~(\tau_{\geq -d+2}\Y)^{\perp})$ gives an isomorphism between the poset of cotorsion pairs in $\K^{[-d+1,0]}(\proj\La)$ and the poset of torsion pairs in $\D^{[-d+2,0]}(\mmod\La)$.
\end{theorem*}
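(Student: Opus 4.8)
\emph{Strategy and set-up.} The approach is to exhibit an explicit two-sided inverse to $\Phi$ and to check that both maps preserve the partial order. Write $\tau:=\tau_{\geq -d+2}$, $\K_\La:=\K^{[-d+1,0]}(\proj\La)$ and $\D_\La:=\D^{[-d+2,0]}(\mmod\La)$; for a full subcategory $\mathcal{B}\subseteq\D_\La$ put $\tau^{-1}\mathcal{B}:=\{X\in\K_\La:\tau X\in\mathcal{B}\}$, and let ${}^{\perp_1}(-)$, $(-)^{\perp_1}$ denote left and right orthogonality for $\Eb_{\K_\La}(-,-)=\Hom_{\K^b(\proj\La)}(-,\Sigma-)$, and $(-)^{\perp}$ the $\Hom_{\D_\La}$-orthogonal. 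The inverse map will be
\[
\Psi\colon\ (\T,\F)\ \longmapsto\ \bigl({}^{\perp_1}(\tau^{-1}\T),\ \tau^{-1}\T\bigr),
\]
a higher analogue of the construction of \cite{PZ,G2}, and I will verify that $\Phi$ and $\Psi$ are mutually inverse morphisms of posets. From Proposition \ref{equiv} I will use that $\tau$ is an exact functor of extriangulated categories, that it is dense, that $\{X\in\K_\La:\tau X\cong 0\}=\add(\Sigma^{d-1}\proj\La)$, and hence that $\Hom_{\D_\La}(\tau X,\tau Y)$ is $\Hom_{\K_\La}(X,Y)$ modulo the maps factoring through $\add(\Sigma^{d-1}\proj\La)$, with every object and morphism of $\D_\La$ liftable to $\K_\La$.

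\emph{Projective--injective data, saturation, and the formal identities.} The first step is an easy computation in $\K^b(\proj\La)$, using only the degree bounds, showing that the stalk complexes in $\proj\La$ in degree $0$ are projective objects of $\K_\La$ and those in $\proj\La$ in degree $-d+1$ (i.e.\ $\Sigma^{d-1}\proj\La$) are injective objects, so that $\proj\La\subseteq\X$ and $\Sigma^{d-1}\proj\La\subseteq\Y$ for every cotorsion pair $(\X,\Y)$. Since $\Y=\X^{\perp_1}$ is closed under direct summands and extensions, $\Sigma^{d-1}\proj\La\subseteq\Y$ forces $\tau^{-1}(\tau\Y)=\Y$ (if $\tau Y'\cong\tau Y$ with $Y\in\Y$, then $Y'$ is a summand of $Y\oplus\Sigma^{d-1}Q\in\Y$ for some $Q\in\proj\La$); dually $\tau(\tau^{-1}\T)=\T$ for every torsion class $\T$ by density of $\tau$. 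Granting the two well-definedness statements below, these identities then give immediately $\Phi\Psi=\mathrm{id}$ (using $\tau\tau^{-1}\T=\T$ and $\F=\T^{\perp}$) and $\Psi\Phi=\mathrm{id}$ (using $\tau^{-1}\tau\Y=\Y$ and ${}^{\perp_1}\Y=\X$), so $\Phi$ and $\Psi$ are inverse bijections.

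\emph{The key computation and the main obstacle.} The technical core, on which both well-definedness statements rest, is the comparison of $\Eb_{\K_\La}$ with the $\Hom$- and extension-functors of $\D_\La$. For $X,Y\in\K_\La$, applying $\Hom_{\D^b(\mmod\La)}(X,-)$ to the truncation triangle $\Sigma^{d-1}\Ho^{-d+1}(Y)\to Y\to\tau Y\to\Sigma^{d}\Ho^{-d+1}(Y)$ in $\D^b(\mmod\La)$, together with the vanishing $\Hom_{\D^b}(X,\Sigma^{d}M)=\Hom_{\D^b}(X,\Sigma^{d+1}M)=0$ for $M\in\mmod\La$ (because the Hom-complex $\Hom^{\bullet}_\La(X,M)$ is concentrated in cohomological degrees $[0,d-1]$), yields $\Eb_{\K_\La}(X,Y)\cong\Hom_{\D^b}(X,\Sigma\tau Y)$; applying $\Hom_{\D^b}(-,\Sigma\tau Y)$ to the truncation triangle of $X$ then gives a natural exact sequence
\[
0\to\Eb_{\D_\La}(\tau X,\tau Y)\to\Eb_{\K_\La}(X,Y)\to\Hom_\La\bigl(\Ho^{-d+1}(X),\,\Ho^{-d+2}(Y)\bigr)\to\Hom_{\D^b}(\tau X,\Sigma^2\tau Y).
\]
Thus $\Eb_{\K_\La}$-orthogonality in $\K_\La$ is governed by extension-orthogonality in $\D_\La$, up to a correction term supported on the cohomology $\Ho^{-d+1}$ that $\tau$ destroys. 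Well-definedness of $\Phi$ is the statement that $\tau\Y$ is a torsion class, i.e.\ $\tau\Y={}^{\perp}\bigl((\tau\Y)^{\perp}\bigr)$, and well-definedness of $\Psi$ is the statement that $\tau^{-1}\T$ is the right-hand part of a cotorsion pair, i.e.\ $\bigl({}^{\perp_1}(\tau^{-1}\T)\bigr)^{\perp_1}=\tau^{-1}\T$. In each case one inclusion is formal; for the reverse inclusion one starts from a witnessing non-vanishing $\Hom$ (respectively $\Eb$), transports it through $\tau$, and uses the displayed sequence to convert it into a witnessing non-vanishing $\Eb$ (respectively $\Hom$) on the other side. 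I expect \emph{this conversion} to be the main obstacle: one must show that the correction term $\Hom_\La(\Ho^{-d+1}(-),\Ho^{-d+2}(-))$ never produces an $\Eb_{\K_\La}$-non-orthogonality that is invisible on the $\D_\La$-side, and it is precisely here that one must use that $\Sigma^{d-1}\proj\La$ lies in the relevant cotorsion class together with the closure of torsion classes under extensions and quotients — the step that generalises the $d=2$ analysis in \cite{PZ,G2}.

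\emph{Monotonicity.} Finally, with the partial orders on the two sides taken to be inclusion of one of the two member subcategories, chosen compatibly (so that $\tau$ and $\tau^{-1}$ are covariant for it, as in the case $d=2$), $\Phi$ and $\Psi$ preserve order, since $\tau(-)$ and $\tau^{-1}(-)$ preserve inclusions and ${}^{\perp_1}(-)$, $(-)^{\perp}$ intertwine them. Combined with the previous paragraphs, this shows that $\Phi$ is an isomorphism of posets with inverse $\Psi$.
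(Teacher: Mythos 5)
There is a genuine gap: the technical core of the theorem --- the well-definedness of $\Phi$ (that $\tau_{\geq-d+2}\Y$ really satisfies $\tau_{\geq-d+2}\Y={}^{\perp}\bigl((\tau_{\geq-d+2}\Y)^{\perp}\bigr)$) and of $\Psi$ (that $\bigl({}^{\perp_1}(\tau^{-1}\T)\bigr)^{\perp_1}=\tau^{-1}\T$) --- is exactly the part you flag as ``the main obstacle'' and leave unproved. Your formal bookkeeping (projective-injectivity of $\proj\La$ and $\Sigma^{d-1}\proj\La$, saturation $\tau^{-1}\tau\Y=\Y$ and $\tau\tau^{-1}\T=\T$, mutual inverseness granted well-definedness, monotonicity) is correct and agrees with Step~2 of the paper's proof, but it is the easy part.

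Moreover, the exact sequence you propose as the key computation, while correct, points in the wrong direction. It compares $\Eb_{\K_\La}(X,Y)$ with $\Eb_{\D_\La}(\tau X,\tau Y)$, i.e.\ extensions with extensions, keeping $\tau Y$ in the \emph{second} argument. But a torsion class is defined by the vanishing of $\Hom(\tau Y,\F)$ with $\tau Y$ in the \emph{first} argument, so no amount of massaging the correction term $\Hom_\La(\Ho^{-d+1}(X),\Ho^{-d+2}(Y))$ will connect $\Eb$-orthogonality in $\K^{[-d+1,0]}(\proj\La)$ to the condition $\tau Y\in{}^{\perp}(\T^{\perp})$. The paper achieves this variance reversal with Serre duality (Theorem \ref{Happel}), which yields $\Eb(X,Y)\cong\mathrm{D}\Hom_{\D^b(\mmod\La)}(\tau_{\geq-d+2}Y,\,\Sigma^{-1}\tau_{\leq-1}\nu X)$ (Lemma \ref{Extlem}); combined with the dual truncation equivalence for injectives (Corollary \ref{dual}), which shows every $F\in\T^{\perp}$ arises as $\Sigma^{-1}\tau_{\leq-1}\nu X$ for some $X$, this lets one test membership in $\Y$, respectively in $\T$, by $\Hom$-vanishing in $\D^{[-d+2,0]}(\mmod\La)$. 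Without some such duality input your strategy cannot close, so the proof as written is incomplete at its central step.
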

In \cite{KY}, the authors provided isomorphisms $\psi$ (resp. $\psi'$) between the poset of silting objects in $\K^b(\proj\La)$ and algebraic $t$-structures in $\D^b(\mmod\La)$ (resp. bounded co-$t$-structures in $\K^b(\proj\La)$) (See Definitions \ref{psi} and \ref{psi2}).

\begin{theorem*}(\ref{main})
    The map $\Phi$ defined above along with the maps $\psi$ and $\psi'$ introduced in \cite{KY} restrict to the following commutative triangle of poset isomorphisms.  
    \begin{equation*}
    \begin{tikzcd}
    d\mbox{-}\silt\La \arrow[rr, "\psi"] \arrow[rrd, "\psi'"'] &  & \chcotors \K^{[-d+1,0]}(\proj\La)\arrow[d, "\Phi"] \\ &  & \fptors \D^{[-d+2,0]}(\mmod\La)                     
    \end{tikzcd}
    \end{equation*}
\end{theorem*}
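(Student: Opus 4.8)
The plan is to use Theorem~\ref{bijections} as the backbone: it already supplies a poset isomorphism $\Phi$ from the full poset $\cotors\K^{[-d+1,0]}(\proj\La)$ onto the full poset $\tors\D^{[-d+2,0]}(\mmod\La)$, so it suffices to establish three points. \emph{(I)} For every $T\in d\mbox{-}\silt\La$ the cotorsion pair $\psi(T)$ is complete and hereditary, and the torsion pair $\psi'(T)$ is positive and functorially finite. \emph{(II)} The restricted maps $\psi$ and $\psi'$ are surjective onto $\chcotors\K^{[-d+1,0]}(\proj\La)$ and $\fptors\D^{[-d+2,0]}(\mmod\La)$. \emph{(III)} $\Phi\circ\psi=\psi'$ on $d\mbox{-}\silt\La$. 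Indeed, $\psi$ and $\psi'$ are injective and monotone, being built from the poset isomorphisms of \cite{KY} by restriction to $d\mbox{-}\silt\La$; together with (I) and (II) this makes them poset isomorphisms onto the stated sub-posets, and then (III) combined with Theorem~\ref{bijections} forces $\Phi$ to restrict to the poset isomorphism $\psi'\circ\psi^{-1}$ between them, which is the asserted commuting triangle. (In fact, by (III), once one knows that $\Phi$ respects the extra conditions --- established along the way --- it is enough to prove (II) for just one of $\psi,\psi'$.)

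For \emph{(I)} I would unwind Definitions~\ref{psi} and~\ref{psi'}: $\psi(T)$ is the cotorsion pair in $\K^{[-d+1,0]}(\proj\La)$ whose right-hand class is the intersection with $\K^{[-d+1,0]}(\proj\La)$ of the aisle of the algebraic $t$-structure of \cite{KY} associated to $T$, and $\psi'(T)$ is the torsion pair in $\D^{[-d+2,0]}(\mmod\La)$ obtained by intersecting the corresponding bounded co-$t$-structure on $\K^b(\proj\La)$ with $\K^{[-d+1,0]}(\proj\La)$ and pushing it forward along $\tau_{\ge -d+2}$. The crucial input is that $T$ is $d$-term, so by \cite[Corollary~3.4]{IJ} its simple-minded collection lies in $\D^{[-d+1,0]}(\mmod\La)$ and the $t$-/co-$t$-structure is ``$d$-intermediate'' --- concretely, the relevant truncation triangles of objects of the truncated category stay inside it --- whence the intersections genuinely define a cotorsion pair and a torsion pair. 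Completeness of $\psi(T)$ and functorial finiteness of $\psi'(T)$ are precisely the existence of these truncation triangles; hereditariness of $\psi(T)$ (vanishing of the higher $G^{k}$ between the two classes) and positivity of $\psi'(T)$ (vanishing of the negative $G^{k}$) then follow from the $\Hom$-orthogonality built into a $t$-/co-$t$-structure once the degree bounds $[-d+1,0]$ and $[-d+2,0]$ are fed in. Concretely one should check that $\psi(T)$ coincides with the cotorsion pair generated by $T$ inside $\K^{[-d+1,0]}(\proj\La)$ --- with $T$ the basic object satisfying $\add T=\X\cap\Y$ --- generalising the $d=2$ descriptions of \cite{PZ,G2}.

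Point \emph{(III)} is carried by the equivalence $\tau_{\ge -d+2}\colon\K^{[-d+1,0]}(\proj\La)/\Sigma^{d-1}\proj\La\xrightarrow{\ \sim\ }\D^{[-d+2,0]}(\mmod\La)$ of Proposition~\ref{equiv}. The Koenig--Yang correspondence pairs $\psi(T)$ and $\psi'(T)$ compatibly, through an HRS-type gluing along $\add T$, and the content of (III) is that the aisle of the $t$-structure and the (appropriately shifted) co-aisle of the co-$t$-structure agree after intersection with $\K^{[-d+1,0]}(\proj\La)$; applying $\tau_{\ge -d+2}$ then identifies $\tau_{\ge -d+2}$ of the right-hand class $\Y$ of $\psi(T)$ with the torsion class of $\psi'(T)$, and since $\Sigma^{d-1}\proj\La$ is annihilated the orthogonal in $\Phi(\X,\Y)=(\tau_{\ge -d+2}\Y,(\tau_{\ge -d+2}\Y)^{\perp})$ becomes the torsion-free class of $\psi'(T)$. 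The same bookkeeping --- how $\Hom_\C$, the higher $G^{k}$ and the negative $G^{k}$ behave under $\tau_{\ge -d+2}$, higher extensions in $\K^{[-d+1,0]}(\proj\La)$ turning into negative extensions in $\D^{[-d+2,0]}(\mmod\La)$ after killing $\Sigma^{d-1}\proj\La$ --- shows, independently of (II), that $\Phi$ matches ``complete'' with ``functorially finite'' and ``hereditary'' with ``positive'', so that $\Phi$ restricts to a bijection $\chcotors\K^{[-d+1,0]}(\proj\La)\xrightarrow{\sim}\fptors\D^{[-d+2,0]}(\mmod\La)$.

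The main obstacle is \emph{(II)}: reconstructing a $d$-term silting object from a complete hereditary cotorsion pair --- equivalently, via the restriction of $\Phi$ just obtained, from a positive functorially finite torsion pair. This is the $d$-term analogue of the hard direction of \cite{AIR}. Given a complete hereditary cotorsion pair $(\X,\Y)$ in $\K^{[-d+1,0]}(\proj\La)$ I would let $T$ be the basic object with $\add T=\X\cap\Y$ and show: completeness supplies the conflations proving $\thick(T)=\K^b(\proj\La)$; hereditariness kills the obstructing extensions so that $\Hom_{\K^b}(T,T[i])=0$ for $i\ne 0$, i.e.\ $T$ is silting; $T\in\K^{[-d+1,0]}(\proj\La)$ forces $T$ to be $d$-term; and $\psi(T)=(\X,\Y)$ by construction. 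The delicate verifications are that $\X\cap\Y$ has exactly as many indecomposable summands as there are simple $\La$-modules, and that these homological conditions survive transport through Proposition~\ref{equiv}; an alternative is to carry out the reconstruction on the torsion side of $\D^{[-d+2,0]}(\mmod\La)$, or to bootstrap from the bijection of \cite[Corollary~3.4]{IJ} between $d$-term silting objects and simple-minded collections in $\D^{[-d+1,0]}(\mmod\La)$.
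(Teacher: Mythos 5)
Your overall architecture --- well-definedness of $\psi$ and $\psi'$ into the decorated sub-posets, surjectivity of one of them, and commutativity of the triangle, all then combined with Theorem \ref{bijections} --- is exactly the paper's, and your points (I) and (III) track Lemma \ref{partiallemma}, Lemma \ref{stors} and Proposition \ref{commute} in outline. One refinement worth noting on (I): the paper does not prove covariant finiteness of $\psi'(T)$ directly; it shows $\psi'(T)$ is an $s$-torsion pair (hence contravariantly finite and positive) and then obtains covariant finiteness only after the triangle commutes, since $\psi'(T)=\Phi(\psi(T))$ and $\Phi$ carries completeness to covariant finiteness (Theorem \ref{tors-cotors}). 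Your parenthetical remark about transporting conditions through (III) shows you see this mechanism, but your claim to get functorial finiteness of $\psi'(T)$ outright in step (I) is not how the argument closes.

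The genuine divergence, and the weak point, is your step (II). You propose a direct AIR-style reconstruction: take the basic object $T$ with $\add T=\X\cap\Y$ and verify by hand that it is $d$-term silting with $\psi(T)=(\X,\Y)$. This is plausible but you leave precisely the hard verifications open ($\thick(T)=\K^b(\proj\La)$, the vanishing $\Hom(T,\Sigma^iT)=0$ for $i>0$, and that $\add T$ recovers the whole pair), and the ``delicate verification'' you flag about counting indecomposable summands of $\X\cap\Y$ is a symptom of the difficulty. The paper sidesteps all of this: given a complete hereditary cotorsion pair $(\X',\Y')$ it forms $C(\X')$ and $C(\Y')$ by closing $\X'\cup\K^{\geq 0}(\proj\La)$ and $\Y'\cup\K^{\leq -d+1}(\proj\La)$ under extensions and summands, proves via two octahedra and the completeness triangles that $(C(\X'),C(\Y'))$ is a bounded co-$t$-structure with co-heart inside $\K^{[-d+1,0]}(\proj\La)$, and then reads off the silting object from the Koenig--Yang bijection $\Theta$; the counting and the silting axioms come for free from \cite{KY}, and $\psi\circ\chi=\mathrm{id}$ follows from $\Y'=C(\Y')\cap\K^{[-d+1,0]}(\proj\La)$ together with the injectivity of $\psi$. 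So your route is not wrong in principle, but as written it defers the core of the theorem to ``the $d$-term analogue of the hard direction of \cite{AIR}'' without supplying it, whereas the extension-to-a-co-$t$-structure argument is the actual content of the proof.
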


The above results can be summarized in the following commutative diagram of poset morphisms, where c stands for complete, h for hereditary, f for functorially finite, p for positive, cov for covariantly, and contr for contravariantly finite. 
\begin{center}
\begin{tikzcd}[column sep=2em, /tikz/column 5/.style={column sep= 0.1em}, /tikz/column 4/.style={column sep=0.5em}]
& {\chcotors\La} \arrow[rd, hook] \arrow[ddd, "\sim"] \arrow[rrr, hook] &  & & {\cotors\La} \arrow[ddd, "\sim"] & 
\\ & & {\contrhcotors\La} \arrow[ddd, "\sim"] \arrow[rru, hook] & \mbox{hereditary} \arrow[<->]{d} &  & \mbox{complete} \arrow[<->]{d} \\
d\mbox{-}\silt\La \arrow[ruu, "\sim"] \arrow[rd, "\sim"'] & &  & \mbox{positive}  &  & \mbox{cov. finite} \\
& {\fptors\La} \arrow[rd, hook] \arrow[rrr, hook]     & & & {\tors\La} &  \\ 
& & {\stors\La} \arrow[rru, hook] & & &
\end{tikzcd}   
\end{center}

Furthermore, it is known that the poset of torsion classes in $\mmod\La$ is a lattice, and its lattice-theoretic properties have been extensively studied (\cite{DIRRT}, \cite{T}, \cite{DIJ}). The corresponding result also holds for torsion and cotorsion pairs in extriangulated categories.
\begin{theorem*}(\ref{torslattice})
    The poset of torsion pairs in an extriangulated category $\C$ is a lattice.
\end{theorem*}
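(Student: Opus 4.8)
The plan is to exhibit the poset of torsion pairs as the poset of closed subcategories for a closure operator on the complete lattice of subcategories of $\C$, and then to invoke the general fact that the fixed points of a closure operator on a complete lattice again form a complete lattice; this yields the claimed lattice structure (in fact a complete one). Note that only the Hom-orthogonality axiom (1) of a torsion pair intervenes, so the bivariant $\delta$-functor plays no role in this statement.

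First I would reduce to torsion classes. Writing $\T^{\perp}=\{f\in\C:\Hom(\T,f)=0\}$ and ${}^{\perp}\F=\{t\in\C:\Hom(t,\F)=0\}$, the torsion-pair axioms say precisely that $\F=\T^{\perp}$ and $\T={}^{\perp}\F$, so a torsion pair is determined by its first component, and $(\T,\F)\mapsto\T$ is an order isomorphism from the poset of torsion pairs (with $(\T_1,\F_1)\le(\T_2,\F_2)$ iff $\T_1\subseteq\T_2$, equivalently $\F_2\subseteq\F_1$) onto the poset of torsion classes ordered by inclusion. Hence it suffices to make the latter a complete lattice. Next I would set up the Galois connection. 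Let $\mathcal{S}$ be the complete lattice of full, isomorphism-closed subcategories of $\C$, where meets are intersections and joins are unions of object classes (the size of $\mathcal{S}$ is harmless when $\C$ is skeletally small, and in any case one may restrict to binary meets and joins). For $\T,\F\in\mathcal{S}$, both $\T\subseteq{}^{\perp}\F$ and $\F\subseteq\T^{\perp}$ are equivalent to the single condition $\Hom(\T,\F)=0$, so $\bigl({}^{\perp}(-),(-)^{\perp}\bigr)$ is an antitone Galois connection on $\mathcal{S}$. Consequently $c(\T):={}^{\perp}(\T^{\perp})$ is a closure operator: it is extensive (for $t\in\T$ one has $\Hom(t,\T^{\perp})=0$ by definition of $\T^{\perp}$, so $t\in c(\T)$), monotone, and idempotent (formal, via $(c\T)^{\perp}=\T^{\perp}$).

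I would then identify the torsion classes with the $c$-closed subcategories: if $(\T,\F)$ is a torsion pair then $\F=\T^{\perp}$ forces $c(\T)={}^{\perp}\F=\T$; conversely, if $c(\T)=\T$ then $(\T,\T^{\perp})$ is a torsion pair, since axiom (1)(b) holds by the definition of $\T^{\perp}$ and axiom (1)(a) unwinds to exactly $c(\T)=\T$. Finally I would conclude by the standard argument: any family $\{\T_i\}$ of $c$-closed subcategories has intersection again $c$-closed, since by monotonicity and extensivity $c(\bigcap_i\T_i)\subseteq\bigcap_i c(\T_i)=\bigcap_i\T_i\subseteq c(\bigcap_i\T_i)$, so intersections compute meets; and $c\bigl(\bigcup_i\T_i\bigr)={}^{\perp}\bigl(\bigcap_i\F_i\bigr)$ is the join. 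In particular the meet and join of two torsion pairs are $\bigl(\T_1\cap\T_2,\,(\T_1\cap\T_2)^{\perp}\bigr)$ and $\bigl({}^{\perp}(\F_1\cap\F_2),\,\F_1\cap\F_2\bigr)$, recovering the familiar abelian formulas. I do not expect a serious obstacle: the proof is formal once one recognises it as a statement about the Galois connection attached to $\Hom$. The only points requiring genuine care are the equivalence "torsion class $\Leftrightarrow$ $c$-closed subcategory", which hinges on the biconditional form of the torsion-pair axioms (a one-sided containment would not suffice), and the bookkeeping around the size of $\mathcal{S}$ should one want the full complete-lattice statement rather than just binary meets and joins.
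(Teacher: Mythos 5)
Your proof is correct and is essentially the paper's argument in Galois-connection packaging: the paper directly verifies $\T_1\cap\T_2={}^{\perp}\bigl((\T_1\cap\T_2)^{\perp}\bigr)$ and dually constructs the join as $\bigl({}^{\perp}(\F_1\cap\F_2),\F_1\cap\F_2\bigr)$, which is exactly your ``intersection of closed sets is closed'' step specialised to two terms, and your explicit meet and join formulas coincide with the paper's. The only difference is that your formulation additionally yields completeness of the lattice, a mild strengthening the paper does not state.
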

\begin{theorem*}(\ref{cotorslattice})
    The poset of cotorsion pairs in an extriangulated category $\C$ is a lattice.
\end{theorem*}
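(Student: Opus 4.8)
The plan is to recognise cotorsion pairs as exactly the Galois-closed pairs of the orthogonality relation determined by $\Eb$, and then to invoke the classical fact that the closed elements of an antitone Galois connection on a complete lattice again form a complete lattice. This is the very same argument that proves Theorem~\ref{torslattice}, with the bifunctor $\Eb$ in the role that $\Hom_\C$ plays there, so in effect both theorems are obtained by one argument.

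Concretely, for full subcategories $\X,\Y\subseteq\C$ (identified with their isomorphism-closed object classes) I would set
\[
\X^{\perp}:=\{C\in\C:\Eb(X,C)=0\ \text{for all }X\in\X\},\qquad
{}^{\perp}\Y:=\{C\in\C:\Eb(C,Y)=0\ \text{for all }Y\in\Y\},
\]
and observe that $\X\subseteq{}^{\perp}\Y\iff\Eb(\X,\Y)=0\iff\Y\subseteq\X^{\perp}$, so that ${}^{\perp}(-)$ and $(-)^{\perp}$ form an antitone Galois connection on the poset of full subcategories of $\C$ ordered by inclusion. Unwinding the definition of cotorsion pair (the $\Eb$-analogue of Definition~\ref{torclass}, namely $\Eb(x,\Y)=0\iff x\in\X$ and $\Eb(\X,y)=0\iff y\in\Y$), a pair $(\X,\Y)$ is a cotorsion pair precisely when $\X={}^{\perp}\Y$ and $\Y=\X^{\perp}$. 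Ordering cotorsion pairs by $(\X_1,\Y_1)\le(\X_2,\Y_2):\iff\X_1\subseteq\X_2$ (equivalently $\Y_1\supseteq\Y_2$), the assignment $(\X,\Y)\mapsto\X$ is then a poset isomorphism onto the set of fixed points of the closure operator $c(\X):={}^{\perp}(\X^{\perp})$, with inverse $\X\mapsto(\X,\X^{\perp})$.

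The remaining steps are formal. The poset of full subcategories of $\C$ admits arbitrary intersections, and an intersection of $c$-closed subcategories is $c$-closed: if each $\X_i$ is closed, then $c\bigl(\bigcap_i\X_i\bigr)\subseteq c(\X_i)=\X_i$ for every $i$, hence $c\bigl(\bigcap_i\X_i\bigr)\subseteq\bigcap_i\X_i$, while $\supseteq$ holds since $c$ is extensive. So the $c$-closed subcategories form a complete lattice: the meet is the intersection, and the join of $\{\X_i\}$ is $c\bigl(\bigcup_i\X_i\bigr)$. Transporting this along the isomorphism above, and using $\bigl(\bigcup_i\X_i\bigr)^{\perp}=\bigcap_i\X_i^{\perp}$, the meet and join of cotorsion pairs $(\X_i,\Y_i)$ are
\[
\bigwedge_i(\X_i,\Y_i)=\Bigl(\bigcap_i\X_i,\ \bigl(\bigcap_i\X_i\bigr)^{\perp}\Bigr),
\qquad
\bigvee_i(\X_i,\Y_i)=\Bigl({}^{\perp}\!\bigl(\bigcap_i\Y_i\bigr),\ \bigcap_i\Y_i\Bigr).
\]
In particular all finite meets and joins exist, so the poset of cotorsion pairs is a lattice — and a complete lattice whenever $\C$ is essentially small, which covers $\cotors\K^{[-d+1,0]}(\proj\La)$. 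Replacing $\Eb$ by $\Hom_\C$ throughout recovers Theorem~\ref{torslattice}.

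I do not expect a genuine obstacle here: once cotorsion pairs are identified with Galois-closed subcategories, the conclusion is pure lattice theory, and the closed subcategories even come automatically equipped with the expected niceness (closure under summands and extensions, containing $0$ and the projectives). The only points needing care are bookkeeping ones: treating ``all full subcategories of $\C$'' as an honest (possibly large) complete lattice, and — more to the point — confirming that the notion of cotorsion pair in force carries no auxiliary requirement (such as completeness, i.e.\ the existence of approximation conflations) that the intersection/union formulas above might fail to preserve. With the orthogonality-only definition used in this paper, no such requirement is present, so the argument goes through unchanged.
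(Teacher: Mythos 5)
Your proposal is correct and is essentially the paper's own argument: the paper verifies directly that $\X_1\cap\X_2={}^{\perp_1}((\X_1\cap\X_2)^{\perp_1})$ via the same two inclusions your closure-operator computation encodes, and dualizes for the other bound. Your Galois-connection packaging adds only the (true but unneeded) observation that arbitrary meets and joins exist, plus a harmless reversal of the paper's order convention on cotorsion pairs.
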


We also have the following result which implies that the poset of $d$-term silting objects embeds into a smaller lattice.

\begin{theorem*}(\ref{ptorslattice})
    The subposet of positive torsion pairs in $ \D^{[-d+2,0]}(\mmod\La)$ is a sublattice of $\tors\D^{[-d+2,0]}(\mmod\La)$.
\end{theorem*}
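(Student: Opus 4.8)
The plan is to use the explicit construction of meets and joins provided by the proof of Theorem~\ref{torslattice}: for torsion pairs $(\T_1,\F_1)$ and $(\T_2,\F_2)$ the meet in $\tors\,\D^{[-d+2,0]}(\mmod\La)$ is $(\T_1\cap\T_2,\,(\T_1\cap\T_2)^{\perp})$ and the join is $({}^{\perp}(\F_1\cap\F_2),\,\F_1\cap\F_2)$; in particular $\T_1\cap\T_2$ is again a torsion class and $\F_1\cap\F_2$ again a torsion-free class, which is immediate from the defining perpendicular conditions once one notes $\F_i\subseteq(\T_1\cap\T_2)^{\perp}$ and $\T_i\subseteq{}^{\perp}(\F_1\cap\F_2)$. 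Before anything else I would halve the work: the $K$-duality $D(-)[d-2]$, with $D=\Hom_K(-,K)$, is a contravariant equivalence $\D^{[-d+2,0]}(\mmod\La)\xrightarrow{\ \sim\ }\D^{[-d+2,0]}(\mmod\La^{\mathrm{op}})$ which interchanges the two arguments of each $G^{k}$ while keeping the degree $k$ fixed; hence it carries positive torsion pairs to positive torsion pairs and, being contravariant, interchanges meets with joins. So it suffices to prove, for an arbitrary finite-dimensional $\La$, that the meet of two positive torsion pairs is positive.

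So fix positive torsion pairs $(\T_1,\F_1)$, $(\T_2,\F_2)$ and put $\T=\T_1\cap\T_2$, $\F=\T^{\perp}$; I must show $G^{k}(\T,\F)=0$ for all $k\leq-1$ (only finitely many such $k$ can contribute, the category being bounded). The key claim is that every $Y\in\F$ admits a \emph{finite} filtration $0=W_N\hookrightarrow W_{N-1}\hookrightarrow\dots\hookrightarrow W_0=Y$ all of whose cokernels $W_{i-1}/W_i$ lie in $\F_1\cup\F_2$. Granting this we are done: for any $t\in\T$ the functor $G^{\bullet}(t,-)$ is cohomological (long exact sequences of the bivariant $\delta$-functor), it vanishes in all degrees $\leq-1$ on $\F_1$ and on $\F_2$ by positivity of $(\T_1,\F_1)$ and $(\T_2,\F_2)$ together with $\T\subseteq\T_i$, and these vanishings propagate up the filtration, so $G^{k}(t,Y)=0$ for $k\leq-1$ and hence $G^{k}(\T,\F)=0$.

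To build the filtration I would decompose $Y$ alternately by the two torsion pairs. First take the $(\T_1,\F_1)$-decomposition $t_1Y\to Y\to f_1Y$; applying $\Hom(S,-)$ for $S\in\T$ and using $G^{-1}(\T_1,\F_1)=0$ gives $\Hom(S,t_1Y)\hookrightarrow\Hom(S,Y)=0$, so $t_1Y\in\T_1\cap\F$. Next decompose $t_1Y$ by $(\T_2,\F_2)$ to get $t_2t_1Y\to t_1Y\to f_2t_1Y$ with $t_2t_1Y\in\T_2\cap\F$ (same argument, using $G^{-1}(\T_2,\F_2)=0$) and $f_2t_1Y\in\F_2$, and iterate, alternating the two torsion pairs. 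This produces a chain of inflations $Y=W_0\hookleftarrow W_1\hookleftarrow W_2\hookleftarrow\dots$ in $\D^{[-d+2,0]}(\mmod\La)$ with every $W_i\in\F$, with $W_{2k-1}\in\T_1$ and $W_{2k}\in\T_2$ for $k\geq1$, and with each cokernel $W_{i-1}/W_i$ in $\F_1\cup\F_2$.

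The crux — and the step I expect to be the real obstacle — is to show this chain terminates, i.e.\ $W_N=0$ for some $N$. If the chain stabilises at some $W_n\cong W_{n+1}$, then $W_n$ lies in both $\T_1$ and $\T_2$ (two consecutive steps), hence in $\T$, while $W_n\in\F=\T^{\perp}$, which forces $\Hom(W_n,W_n)=0$ and $W_n=0$; so it suffices to rule out a strictly infinite descending chain of inflations of this shape. This should follow from the bounded, finite-length structure of $\D^{[-d+2,0]}(\mmod\La)$: since the cokernels have cohomology concentrated in $[-d+2,0]$ one gets $H^{-d+2}(W_i)\hookrightarrow H^{-d+2}(W_{i-1})$, so the length of $H^{-d+2}(W_i)$ is weakly decreasing, and once it is constant the connecting maps let one control the next cohomological degree, so an induction on the width of the window $[-d+2,0]$ should force eventual stabilisation; alternatively one may transport the chain through the equivalence of Proposition~\ref{equiv} into $\K^{[-d+1,0]}(\proj\La)/\Sigma^{d-1}\proj\La$ and argue with the total rank of minimal projective representatives. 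The delicate point is the bookkeeping for this termination — in particular handling that higher cohomology groups of the $W_i$ may temporarily grow while only the bottom one is manifestly monotone — while everything else is formal. Finally, positivity of joins follows by applying the (now established) meet case to $\La^{\mathrm{op}}$ together with the duality above, which shows the positive torsion pairs form a sublattice of $\tors\,\D^{[-d+2,0]}(\mmod\La)$.
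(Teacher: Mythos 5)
The central step of your argument --- ``take the $(\T_1,\F_1)$-decomposition $t_1Y\to Y\to f_1Y$'' --- is not available. Torsion pairs in $\D^{[-d+2,0]}(\mmod\La)$ in the sense of Definition \ref{torclass} are defined purely by the orthogonality conditions; the existence of a conflation $T\to Y\to F$ with $T\in\T$ and $F\in\F$ is precisely axiom (Sc) of an $s$-torsion pair, and the paper's Kronecker example exhibits a positive torsion pair that is \emph{not} an $s$-torsion pair exactly because some objects admit no right $\T$-approximation, hence no such decomposition. So the alternating filtration cannot even be started for general positive torsion pairs, and the claim that every $Y\in(\T_1\cap\T_2)^{\perp}$ is finitely filtered by $\F_1\cup\F_2$ is unsupported. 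On top of that, you yourself flag that termination of the chain is the ``real obstacle'' and only sketch why it ``should'' stabilise; as written this is a second, unclosed gap. (The duality reduction of joins to meets over $\La^{\mathrm{op}}$ is fine in principle, but unnecessary.)

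The intended argument is much shorter and avoids decompositions entirely. Lemma \ref{p-implies-s} characterizes positivity of a torsion pair $(\T,\F)$ by a closure condition on $\T$ alone (for every morphism $f$ between objects of $\T$, one has $\tau_{\geq-d+2}C(f)\in\T$) and, dually, by a closure condition on $\F$ alone. Such conditions are manifestly stable under intersection: if $\T_1$ and $\T_2$ satisfy the first, so does $\T_1\cap\T_2$, which gives positivity of the meet $(\T_1\cap\T_2,(\T_1\cap\T_2)^{\perp})$; the dual condition applied to $\F_1\cap\F_2$ gives positivity of the join. No filtration or termination argument is required.
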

\begin{corollary*}(\ref{hcotorslattice})
    The subposet of hereditary cotorsion pairs in \linebreak $ \K^{[-d+1,0]}(\proj\La)$ is a sublattice of $\cotors\K^{[-d+1,0]}(\proj\La)$.
\end{corollary*}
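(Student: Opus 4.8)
The plan is to deduce this corollary from Theorem~\ref{ptorslattice} by transporting it across the isomorphism $\Phi$ of Theorem~\ref{bijections}. First I would note that both $\K^{[-d+1,0]}(\proj\La)$ and $\D^{[-d+2,0]}(\mmod\La)$ are extriangulated (the former is extension-closed in $\K^{b}(\proj\La)$, the latter in $\D^{b}(\mmod\La)$), so by Theorems~\ref{cotorslattice} and~\ref{torslattice} their posets of cotorsion pairs and of torsion pairs are lattices. Since $\Phi$ is an isomorphism of posets between these two lattices, and any order isomorphism between lattices automatically preserves all existing meets and joins, $\Phi$ is a lattice isomorphism; in particular $\Phi$ and $\Phi^{-1}$ carry sublattices to sublattices.

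The key step is to show that $\Phi$ restricts to an order isomorphism between the subposet of hereditary cotorsion pairs in $\K^{[-d+1,0]}(\proj\La)$ and the subposet of positive torsion pairs in $\D^{[-d+2,0]}(\mmod\La)$. For a cotorsion pair $(\X,\Y)$, being hereditary is a vanishing condition on the higher extension groups between its two components, while $\Phi(\X,\Y)=(\tau_{\geq -d+2}\Y,\,(\tau_{\geq -d+2}\Y)^{\perp})$ is positive exactly when the graded pieces $G^{k}$ of its two components vanish for all $k\leq -1$. I would match these two conditions by means of the equivalence \eqref{H0} induced by $\tau_{\geq -d+2}$ together with the compatibility of the bivariant $\delta$-functor structure with this truncation functor. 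This is the same dictionary that underlies the proof of Theorem~\ref{main}, where the complete hereditary cotorsion pairs are identified with the functorially finite positive torsion pairs; here one simply drops the finiteness hypotheses on both sides.

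Granting the key step, the corollary follows formally: Theorem~\ref{ptorslattice} exhibits the positive torsion pairs as a sublattice of $\tors\D^{[-d+2,0]}(\mmod\La)$, and pulling this sublattice back along the lattice isomorphism $\Phi$ produces a sublattice of $\cotors\K^{[-d+1,0]}(\proj\La)$, which by the previous paragraph coincides with the subposet of hereditary cotorsion pairs. The main obstacle I anticipate is precisely the key step: one must check that $\tau_{\geq -d+2}$ not only realises the equivalence \eqref{H0} but also intertwines the relevant extension groups in all negative degrees, so that the ``hereditary'' vanishing condition for a cotorsion pair in $\K^{[-d+1,0]}(\proj\La)$ genuinely corresponds to the ``positive'' vanishing condition for its image. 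Once this compatibility is in place --- and it should already be available from the proofs of Theorems~\ref{bijections} and~\ref{main} --- the rest is bookkeeping.
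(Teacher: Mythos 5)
Your proposal is correct and is essentially the paper's (implicit) argument: transport Theorem~\ref{ptorslattice} across the poset isomorphism $\Phi$. Note that the ``key step'' you flag --- that $\Phi$ restricts to an order isomorphism between hereditary cotorsion pairs and positive torsion pairs --- requires no further verification, since it is precisely part~(1) of the statement of Theorem~\ref{bijections}.
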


Since the class of $s$-torsion classes is contained in the class of positive torsion classes, it would be interesting to know if it is closed under meets and joins, or if it forms a lattice. 

\medskip
In \S~\ref{cluster-tilting}, we will provide a relation between $(d-1)$-torsion classes studied in \cite{AHJKPT2} and positive torsion classes in $\D^{[-d+2,0]}(\mmod\La)$.

\section*{Acknowledgements}
I would like to express my most sincere gratitude towards Pierre-Guy Plamondon for his guidance, availability, and constant support throughout the development of this work. I am also thankful to Vincent Pilaud for pointing out the non-semidistributivity of the lattice of silting objects (Example \ref{nsemid}), to Hugh Thomas for referring to the study of $d$-term silting objects for Dynkin cases \cite{STW}, and to Yann Palu for his several interesting questions, including the one about the relation of this work to higher torsion classes. I would also like to thank Monica Garcia for sharing the manuscript of her Ph.D. thesis, which provided the idea for the proof of Theorem \ref{bijections}. Finally, I would like to thank Søndre Kvamme for his helpful discussion and all the authors of \cite{AHJKPT2} for sharing the preliminary version of their article whose results have been used in \S~\ref{cluster-tilting}. 

\section{Notation and definitions}\label{not}
Throughout this work, $\La$ will denote a finite-dimensional algebra over an algebraically closed field $K$. For a morphism $f$ in a triangulated category, $C(f)$ will denote the cone of $f$. We denote by $\D^b(\mmod\La)$ the bounded derived category of $\mmod\La$, which is equivalent to $\K^{b,-}(\proj\La)$, the homotopy category of right bounded complexes of finitely generated projectives over $\La$ with bounded cohomology. We denote the category of bounded complexes of finitely generated projectives over $\La$ by $\mathrm{C}^b(\proj\La)$, and the homotopy category of bounded complexes of finitely generated projectives over $\La$ by $\K^b(\proj\La)$. For $n,m\in \ZZ$, we set
\begin{equation*}
    \begin{split}
        \D^{\leq n}(\mmod\La)&:=\{X\in \D^b(\mmod\La)\mid \Ho^{>n}(X)=0\}\\
        \D^{\geq m}(\mmod\La)&:=\{X\in \D^b(\mmod\La)\mid \Ho^{<m}(X)=0\}\\
        \D^{[m,n]}(\mmod\La)&:=\D^{\leq n}(\mmod\La)\cap\D^{\geq m}(\mmod\La),
    \end{split}
\end{equation*}
and call them the \emph{truncated derived categories} of $\La$.
\medskip
Similarly, we define
\begin{equation*}
    \begin{split}
    \K^{\geq m}(\proj\La)&:= \{X\in \K^{b,-}(\proj\La)\mid \exists \ Y\in \K^{b,-}(\proj\La) \mbox{ with } X\cong Y \mbox{ and } Y^{<m}=0\}\\
    \K^{\leq m}(\proj\La)&:= \{X\in \K^{b,-}(\proj\La)\mid \exists \ Y\in \K^{b,-}(\proj\La) \mbox{ with } X\cong Y \mbox{ and } Y^{>m}=0\}\\
        &\K^{[m,n]}(\proj\La):= \K^{\geq m}(\proj\La)\cap K^{\leq n}(\proj\La),
    \end{split}
\end{equation*}
and call them the \emph{truncated homotopy categories} of $\La$. Being extension closed subcategories of triangulated categories, both $\D^{[m,n]}(\mmod\La)$ and $\K^{[m,n]}(\proj\La)$ inherit extriangulated structures in the sense of \cite{NP}.

We recall that for any $d\in \ZZ$, the natural $t$-structure \cite{BBDG} $$(\D^{\leq d}(\mmod\La),\D^{\geq d+1}(\mmod\La))$$ on $\D^b(\mmod\La)$ induces truncation functors $\tau_{\leq d}$ and $\tau_{\geq d+1}$ defined as follows. 

Let $A^\bullet=(\cdots\to A^{-2}\xrightarrow{\delta^{-2}} A^{-1}\xrightarrow{\delta^{-1}} A^0\xrightarrow{\delta^{0}} A^1\xrightarrow{\delta^{1}} A^2\to\cdots)\in\D^b(\mmod\La)$. Then

\begin{equation*}
    \begin{split}
        \tau_{\leq d}A^{\bullet}&:=(\cdots\to A^{d-2}\to A^{d-1}\to\Ker{\delta^d}\to 0\to 0\to\cdots)\\
        \tau_{\geq d+1}A^{\bullet}&:=(\cdots\to 0\to 0\to A^{d}/\Ker{\delta^d}\to A^{d+1}\to A^{d+2}\to\cdots)
    \end{split}
\end{equation*}
and we have a triangle $\tau_{\leq d}A^{\bullet}\to A^\bullet\to \tau_{\geq d+1}A^{\bullet}$ (we will not draw the last arrow of a distinguished triangle).

Similarly, the natural co-$t$-structure (\cite{P,B}\cite[\S~3.4]{KY}) $$(\K^{\geq d}(\proj\La),\K^{\leq d}(\proj\La))$$ on $\K^b(\proj\La)$ induces the hard truncations $t_{\leq d}$ and $t_{\geq d-1}$ defined below.

Let $P^\bullet=(\cdots\to P^{-2}\xrightarrow{\delta^{-2}} P^{-1}\xrightarrow{\delta^{-1}} P^0\xrightarrow{\delta^{0}} P^1\xrightarrow{\delta^{1}}P^2\to\cdots)\in \mathrm{C}^b(\proj\La)$. Then
\begin{equation*}
    \begin{split}
        t_{\leq d-1}P^{\bullet}&:=(\cdots\to P^{d-2}\to P^{d-1}\to 0\to 0\to 0\to\cdots)\in \mathrm{C}^b(\proj\La)\\
        t_{\geq d}P^{\bullet}&:=(\cdots\to 0\to 0\to 0\to P^{d}\to P^{d+1}\to\cdots) \in \mathrm{C}^b(\proj\La),
    \end{split}
\end{equation*}
and we get a triangle $t_{\geq d}P^{\bullet}\to P^\bullet\to t_{\leq d-1}P^{\bullet}$ in $\K^b(\proj\La)$.

We now recall the definition of silting objects in $\K^b(\proj\La)$.

\begin{definition}
    Let $M\in \K^b(\proj\La)$. 
    \begin{enumerate}
        \item $M$ is called a \emph{silting object} if 
        \begin{enumerate}
        \item $\Hom(M,\Sigma^iM)=0$ for all $i>0$;
        \item $\thick M=\K^b(\proj\La)$, where $\thick M$ is the smallest full triangulated subcategory of $\K^b(\proj\La)$ containing $M$, and closed under summands and isomorphism.
        \end{enumerate}
        \item A silting object is called $d$\emph{-term} if it is isomorphic to a complex of projectives concentrated in degrees $\{-d+1,\cdots,0\}$.
    \end{enumerate}
\end{definition}
We denote by $\silt \La$ the set of isomorphism classes of basic silting objects in $\K^b(\proj\La)$. This can be equipped with a poset structure $\leq$ as follows (\cite[Theorem~2.11]{AI}).
$$P \leq Q :\iff  \Hom (Q, \Sigma^iP) = 0 \ \forall\ i>0$$

We will denote the subposet of $\silt\La$ of $d$-term silting objects by $d$-$\silt\La$.

\section{Bijection between torsion and cotorsion pairs}
\subsection{Truncation functors}
The goal of this section is to generalize the relation between $K^{[-1,0]}(\proj\La)$ and $\mmod\La$ to higher truncations of the homotopy category. 
\medskip
Fix $d\geq 2$. Then we note that the truncation functor $\tau_{\geq -d+2}:\D^b(\mmod\La)\to\D^b(\mmod\La)$ defined in \S~\ref{not} gives a restriction $$\tau_{\geq -d+2}:\K^{[-d+1,0]}(\proj\La)\to \D^{[-d+2,0]}(\mmod\La).$$ The following proposition recovers, for $d=2$, some known properties of the cohomology functor $\Ho^0:\K^{[-1,0]}(\proj\La)\to \mmod\La$.

\begin{proposition}\label{equiv}
    The truncation functor $$\tau_{\geq -d+2}:\K^{[-d+1,0]}(\proj\La)\to \D^{[-d+2,0]}(\mmod\La)$$ induces an equivalence between the categories $\frac{\K^{[-d+1,0]}(\proj\La)}{\add\La[d-1]}$ and $\D^{[-d+2,0]}(\mmod\La)$.
\end{proposition}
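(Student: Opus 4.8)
The plan is to show that $\tau_{\geq -d+2}$ is full, dense, and that its kernel (on morphisms) is exactly the ideal of maps factoring through $\add\La[d-1]$; by the standard criterion for an additive functor to induce an equivalence on a quotient category, this suffices. I would organize the argument around the natural truncation triangle. For $P^\bullet\in\K^{[-d+1,0]}(\proj\La)$, write $\sigma$ for the stupid truncation keeping degree $-d+1$: there is a triangle $P^{-d+1}[d-1]\to P^\bullet\to \sigma_{\geq -d+2}P^\bullet\to P^{-d+1}[d]$, where $P^{-d+1}[d-1]\in\add\La[d-1]$. Applying $\tau_{\geq -d+2}$ and using that $\tau_{\geq -d+2}(P^{-d+1}[d-1])=0$ (its cohomology sits in degree $-d+1<-d+2$), one sees $\tau_{\geq -d+2}P^\bullet$ agrees with $\tau_{\geq -d+2}$ of the $(d-1)$-term complex $\sigma_{\geq -d+2}P^\bullet$. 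This reduces bookkeeping and will be used repeatedly.

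\emph{Density.} Given $X\in \D^{[-d+2,0]}(\mmod\La)$, I would take a projective resolution and truncate: choose a quasi-isomorphism from a complex $Q^\bullet$ of projectives concentrated in degrees $\leq 0$, and let $P^\bullet=t^{\geq -d+1}Q^\bullet$ be the hard truncation, a complex in $\K^{[-d+1,0]}(\proj\La)$. Since $X$ has no cohomology below degree $-d+2$, the map $\tau_{\geq -d+2}P^\bullet\to \tau_{\geq -d+2}Q^\bullet = X$ is an isomorphism; concretely, cutting off $Q^\bullet$ below degree $-d+1$ only alters $\Ho^{-d+1}$, which is then killed by $\tau_{\geq -d+2}$. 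Hence $\tau_{\geq -d+2}P^\bullet\cong X$.

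\emph{Fullness and identification of the kernel.} For $P^\bullet,R^\bullet\in\K^{[-d+1,0]}(\proj\La)$, I would analyze $\Hom(P^\bullet,R^\bullet)\to \Hom(\tau_{\geq -d+2}P^\bullet,\tau_{\geq -d+2}R^\bullet)$ via the triangle above applied to $P^\bullet$, together with adjunction $\Hom_{\D^b}(\tau_{\geq -d+2}P^\bullet, \tau_{\geq -d+2}R^\bullet)\cong \Hom_{\D^b}(\tau_{\geq -d+2}P^\bullet, R^\bullet)$ (as $R^\bullet$ already lies in $\D^{\geq -d+2}$ up to the degree $-d+1$ term, one must be slightly careful — here again the triangle for $R^\bullet$ enters). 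The long exact sequences obtained by applying $\Hom(-,R^\bullet)$ to the triangle for $P^\bullet$ and $\Hom(\tau_{\geq -d+2}P^\bullet,-)$ to the triangle for $R^\bullet$ show that the cokernel and kernel of the comparison map are controlled by $\Hom(P^{-d+1}[d-1],R^\bullet)$ and $\Hom(P^\bullet, R^{-d+1}[d])=\Hom(P^\bullet,R^{-d+1}[d])$. Using that $P^\bullet,R^\bullet$ are concentrated in $[-d+1,0]$, the relevant negative/positive $\Hom$'s between $\La$ shifted by $d-1$ or $d$ and a $[-d+1,0]$-complex vanish for degree reasons (this is where $d\geq 2$ and the width of the window matter), forcing surjectivity. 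The same diagram chase shows a morphism $P^\bullet\to R^\bullet$ maps to $0$ under $\tau_{\geq -d+2}$ iff it factors through the connecting map $P^\bullet\to P^{-d+1}[d]$, equivalently through an object of $\add\La[d-1]$ — matching the claim that the kernel ideal is $[\add\La[d-1]]$.

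The main obstacle I anticipate is the fullness/kernel computation: one must correctly handle that $R^\bullet$ is \emph{not} literally in $\D^{\geq -d+2}$ (it has a degree $-d+1$ term), so the naive adjunction must be replaced by the truncation triangle for $R^\bullet$ as well, and one must check the two connecting-morphism contributions (from truncating $P^\bullet$ and from truncating $R^\bullet$) combine exactly to the ideal generated by $\add\La[d-1]$ rather than something larger. Verifying the requisite $\Hom$-vanishing — e.g. that $\Hom_{\K^b}(P^\bullet, \La[d])=0$ and $\Hom_{\K^b}(\La[d-1],R^\bullet)$ is accounted for — is a degreewise calculation using that both complexes are supported in an interval of length $d-1$, and this is the technical heart of the proof. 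Once these vanishings are in place, the equivalence follows formally from the quotient-category criterion.
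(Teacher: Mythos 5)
Your proposal is correct in outline and, for the fullness and kernel steps, takes a genuinely different route from the paper. The paper argues at the chain level: for fullness it lifts a morphism $\tau_{\geq -d+2}X\to\tau_{\geq -d+2}Y$ to a morphism between projective resolutions and then restricts to degrees $\geq -d+1$; for the kernel it extracts, from an explicit null-homotopy of the lifted map, a factorization through the stalk complex $X^{-d+1}[d-1]$. You instead run everything through the brutal truncation triangles $P^{-d+1}[d-1]\to P^\bullet\to\sigma_{\geq -d+2}P^\bullet\to P^{-d+1}[d]$ inside $\D^b(\mmod\La)$ and the resulting long exact sequences, reducing the whole statement to a handful of $\Hom$-vanishings that hold for degree reasons (using $\Hom_{\K^b}=\Hom_{\D^b}$ on bounded complexes of projectives). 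Your route is more formal and avoids the homotopy bookkeeping; the paper's is more elementary and self-contained at the level of complexes. The density arguments are essentially identical.

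One correction to your kernel step: a morphism $f\colon P^\bullet\to R^\bullet$ with $\tau_{\geq -d+2}f=0$ does \emph{not} factor through the connecting map $P^\bullet\to P^{-d+1}[d]$; indeed $\Hom(P^{-d+1}[d],R^\bullet)=0$ for degree reasons, so such a factorization would force $f=0$. The correct statement uses the triangle for the \emph{target}: $\tau_{\geq -d+2}f=0$ means the composite $P^\bullet\to R^\bullet\to\sigma_{\geq -d+2}R^\bullet$ vanishes, so $f$ lifts through the fibre $R^{-d+1}[d-1]\to R^\bullet$, and $R^{-d+1}[d-1]\in\add\La[d-1]$. With that fix, the vanishings you need are $\Hom(P^\bullet,R^{-d+1}[d])=0$ (giving surjectivity of $\Hom(P^\bullet,R^\bullet)\to\Hom(P^\bullet,\sigma_{\geq -d+2}R^\bullet)$) and $\Hom(P^{-d+1}[d-1],\sigma_{\geq -d+2}R^\bullet)=0=\Hom(P^{-d+1}[d],\sigma_{\geq -d+2}R^\bullet)$ (identifying the latter group with $\Hom(\sigma_{\geq -d+2}P^\bullet,\sigma_{\geq -d+2}R^\bullet)$); the one group that does not vanish, $\Hom(P^\bullet,R^{-d+1}[d-1])$, accounts exactly for the ideal $[\add\La[d-1]]$. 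Once stated this way, your argument goes through.
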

\begin{proof} We divide the proof into three steps. 
    \medskip
    \textbf{Step 1.} The functor $\tau_{\geq -d+2}$ is essentially surjective on $\D^{[-d+2,0]}(\mmod\La)$: Let $M\in \D^{[-d+2,0]}(\mmod\La)$. Then, using the fact that the inclusion of $\K^{b,-}(\proj\La)$ into $\D^b(\mmod\La)$ is an equivalence, $M$ is isomorphic to a complex of projectives bounded above, say, $(\cdots\to P^{-d+1}\to\cdots P^{-1}\to P^0\to 0)$. Then $P'=(\cdots\to0\to P^{-d+1}\to\cdots \to P^0\to0)\in\K^{[-d+1,0]}(\proj\La)$ and the following map gives a quasi-isomorphism between $M$ and $\tau_{\geq -d+2}(P')$.

    \begin{center}
    \begin{tikzcd}
    \cdots \arrow[r] & P^{-d} \arrow[r, "{\delta^{-d}}"] \arrow[d] & P^{-d+1} \arrow[d, "\pi"] \arrow[r, "\delta^{-d+1}"]          & P^{-d+2} \arrow[r] \arrow[d, "id"] & \cdots \arrow[r] & P^0 \arrow[r] \arrow[d, "id"] & 0 \\
    \cdots \arrow[r] & 0 \arrow[r]                                     & \frac{P^{-d+1}}{\Ker {\delta^{-d+1}}} \arrow[r, "\overline{\delta^{-d+1}}"'] & P^{-d+2} \arrow[r]                 & \cdots \arrow[r] & P^0 \arrow[r]                 & 0
    \end{tikzcd}
    \end{center}

    \medskip
    \textbf{Step 2.} The functor $\tau_{\geq -d+2}$ is full: Let $X,Y\in K^{[-d+1,0]}(\proj\La)$ and $f:\tau_{\geq -d+2}X\to \tau_{\geq -d+2}Y\in \D^{[-d+2,0]}(\mmod\La)$. By definition, $$\tau_{\geq -d+2}X=0\to \frac{X^{-d+1}}{\Ker{\delta_X^{-d+1}}}\xrightarrow{\overline{\delta_X^{-d+1}}}X^{-d+2}\to\cdots\to X^0\to 0$$ and $$\tau_{\geq -d+2}Y=0\to \frac{Y^{-d+1}}{\Ker{\delta_Y^{-d+1}}}\xrightarrow{\overline{\delta_Y^{-d+1}}}Y^{-d+2}\to\cdots\to Y^0\to 0.$$ Let $\cdots \to P^{-d-1}\to P^{-d}\to X^{-d+1}\to 0$ and $\cdots \to Q^{-d-1}\to Q^{-d}\to Y^{-d+1}\to 0$ be projective resolutions of $\frac{X^{-d+1}}{\Ker{\delta_X^{-d+1}}}$ and $\frac{Y^{-d+1}}{\Ker{\delta_Y^{-d+1}}}$ respectively. Set $X'=\cdots\to P^{-d}\to X^{-d+1}\to X^{-d+2}\to\cdots\to X^0\to 0$ and $Y'=\cdots\to Q^{-d}\to Y^{-d+1}\to Y^{-d+2}\to\cdots\to Y^0\to 0$. Then there exist canonical quasi-isomorphisms $i_X: X'\to \tau_{\geq -d+2}X $ and $i_Y:Y'\to \tau_{\geq -d+2}Y$. Using the equivalence between $\K^{b,-}(\proj\La)$ and $\D^b(\mmod\La)$ again, we get that $g:=(i_Y)^{-1}fi_X\in \K^{b,-}(\proj\La)$. Let $g': X\to Y$ be the map shown below. 
    \begin{center}
        \begin{tikzcd}
        0 \arrow[r] & X^{-d+1} \arrow[d, "g^{-d+1}"] \arrow[r] & X^{-d+2} \arrow[r] \arrow[d, "g^{-d+2}"] & \cdots \arrow[r] & X^0 \arrow[r] \arrow[d, "g^0"] & 0 \\0 \arrow[r] & Y^{-d+1} \arrow[r]                     & Y^{-d+2} \arrow[r]                       & \cdots \arrow[r] & Y^0 \arrow[r]                  & 0
        \end{tikzcd}
    \end{center}
    Then $\tau_{\geq -d+2}(g')=\tau_{\geq -d+2}(g)
    =\tau_{\geq -d+2}(f)=f$. 
    %Then it can be checked that $\tau_{\geq -d+2}(g')\circ i_X=i_Y\circ g$. Thus $\tau_{\geq -d+2}(g')=i_Y\circ g\circ (i_X)^{-1}=f$.

    \medskip
    
    \textbf{Step 3.} The kernel of $\tau_{\geq -d+2}$ is the set of maps factoring through $\add\La[d-1]$: Clearly, if a map factors through $\add\La[d-1]$, then it lies in $\Ker{\tau_{\geq -d+2}}$. Let $f:X\to Y\in \Ker{\tau_{\geq -d+2}}$. As before, we can define $X',Y'\in \K^{b,-}(\proj\La)$ with canonical quasi-isomorphisms $i_X: X'\to \tau_{\geq -d+2}X $ and $i_Y:Y'\to \tau_{\geq -d+2}Y$. Since a morphism of modules can be lifted to a morphism of their projective resolutions, we can find maps $g^i:P^i\to Q^i$ for $i\leq -d$ such that the following is a morphism $h$ of complexes.
    \begin{center}
        \begin{tikzcd}
        \cdots \arrow[r] & P^{-d} \arrow[r] \arrow[d, "g^{-d}"] & X^{-d+1} \arrow[d, "f^{-d+1}"] \arrow[r] & X^{-d+2} \arrow[r] \arrow[d, "f^{-d+2}"] & \cdots \arrow[r] & X^0 \arrow[r] \arrow[d, "f^0"] & 0 \\
        \cdots \arrow[r] & Q^{-d} \arrow[r]                       & Y^{-d+1} \arrow[r]                     & Y^{-d+2} \arrow[r]                       & \cdots \arrow[r] & Y^0 \arrow[r]& 0\end{tikzcd}
    \end{center}
    Again it can be checked that $\tau_{\geq -d+2}(f)\circ i_X=i_Y\circ h$, which implies that $h=0$. Thus, there exist maps $k^i:X^i\to Y^{i-1}$ such that the homotopy relation holds. 
    \begin{center}
        \begin{tikzcd}
        \cdots \arrow[r] & P^{-d} \arrow[r] \arrow[d, "g^{-d}"] & X^{-d+1} \arrow[d, "f^{-d+1}"] \arrow[r] \arrow[ld, "k^{-d+1}", dashed] & X^{-d+2} \arrow[r] \arrow[d, "f^{-d+2}"] \arrow[ld, "k^{-d+2}", dashed] & \cdots \arrow[r] \arrow[ld, dashed] & X^0 \arrow[r] \arrow[d, "f^0"] \arrow[ld, dashed] & 0 \\\cdots \arrow[r] & Q^{-d} \arrow[r, "\delta_Y^{-d}"']                       & Y^{-d+1} \arrow[r]                                                  & Y^{-d+2} \arrow[r]                                                      & \cdots \arrow[r]            & Y^0 \arrow[r]                             & 0
        \end{tikzcd}
    \end{center}
    Consider the following two maps. 
    \begin{center}
        \begin{tikzcd}
        0 \arrow[r] & X^{-d+1} \arrow[d, "id"] \arrow[r] & X^{-d+2} \arrow[r] \arrow[d] & \cdots \arrow[r] & X^0 \arrow[r] \arrow[d] & 0 \\
        0 \arrow[r] & X^{-d+1} \arrow[r]   \arrow[d, "\delta^{-d}_Y\circ k^{-d+1}"']              & 0 \arrow[r]  \arrow[d]                & \cdots \arrow[r] & 0 \arrow[r]   \arrow[d]          & 0
        \\0 \arrow[r] & Y^{-d+1} \arrow[r]                                           & Y^{-d+2} \arrow[r]    & \cdots \arrow[r] & Y^0 \arrow[r]         & 0
        \end{tikzcd}
    \end{center}
    The diagram below shows that the composition of the above two maps is equal to $f$ in the homotopy category. 

    \begin{center}
    \begin{tikzcd}
    0 \arrow[r] & X^{-d+1} \arrow[d, "f^{-d+1}-\delta^{-d}_Y\circ k^{-d+1}"'] \arrow[r] & X^{-d+2} \arrow[r] \arrow[d, "f^{-d+2}"] \arrow[ld, "k^{-d+2}", dashed] & \cdots \arrow[r] \arrow[ld, dashed] & X^0 \arrow[r] \arrow[d, "f^0"] \arrow[ld, dashed] & 0 \\
    0 \arrow[r] & Y^{-d+1} \arrow[r]& Y^{-d+2} \arrow[r]  & \cdots \arrow[r]    & Y^0 \arrow[r] & 0
    \end{tikzcd}    
    \end{center}
    Thus, $f$ factors through $\add\La[d-1]$, and we get an induced equivalence $$\tau_{\geq -d+2}: \frac{\K^{[-d+1,0]}(\proj\La)}{\add\La[d-1]} \to \D^{[-d+2,0]}(\mmod\La).$$
    
\end{proof}

Our next goal is to show that we can lift conflations in $\D^{[-d+2,0]}(\mmod\La)$ to conflations in $\K^{[-d+1,0]}(\proj\La)$ across this equivalence (we talk about the conflations coming from the respective extriangulated structures). We will need the following lemma about inflations in $\K^{[-d+1,0]}(\proj\La)$.
\begin{lemma}\label{split}
    Let $f: X\to Y$ be a morphism in $\K^{[-d+1,0]}(\proj\La)$ such that $f^{-d+1}$ is a split mono. Then $f$ is an inflation.
\end{lemma}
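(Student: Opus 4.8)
The plan is to use the characterisation of inflations for extension-closed subcategories of triangulated categories: since $\K^{[-d+1,0]}(\proj\La)$ carries the extriangulated structure inherited from $\K^b(\proj\La)$, its conflations are exactly the triangles of $\K^b(\proj\La)$ all of whose terms lie in $\K^{[-d+1,0]}(\proj\La)$. Hence a morphism $f\colon X\to Y$ between objects of $\K^{[-d+1,0]}(\proj\La)$ is an inflation if and only if its cone $C(f)$, computed in $\K^b(\proj\La)$, is isomorphic to an object of $\K^{[-d+1,0]}(\proj\La)$. So the whole statement reduces to showing that $C(f)$ is isomorphic in $\K^b(\proj\La)$ to a complex concentrated in degrees $[-d+1,0]$.

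Next I would write down $C(f)$ explicitly. Since $X$ and $Y$ are concentrated in degrees $[-d+1,0]$, the complex $C(f)$ is concentrated in degrees $[-d,0]$, with $C(f)^{-d}=X^{-d+1}$, $C(f)^{i}=X^{i+1}\oplus Y^{i}$ for $-d+1\leq i\leq -1$, and $C(f)^{0}=Y^{0}$; the differential leaving the leftmost term is $\left(\begin{smallmatrix}-\delta_X^{-d+1}\\ f^{-d+1}\end{smallmatrix}\right)\colon X^{-d+1}\to X^{-d+2}\oplus Y^{-d+1}$. The only term outside the desired range is $C(f)^{-d}$, so the task is to cancel it against part of $C(f)^{-d+1}$.

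The key step is Gaussian elimination of complexes. Because $f^{-d+1}$ is a split monomorphism, fix a retraction and a decomposition $Y^{-d+1}\cong X^{-d+1}\oplus K$ under which $f^{-d+1}$ is the canonical inclusion; note that $K$, being a direct summand of the projective module $Y^{-d+1}$, is projective. Under the induced identification $C(f)^{-d+1}\cong X^{-d+2}\oplus X^{-d+1}\oplus K$, the differential $C(f)^{-d}\to C(f)^{-d+1}$ becomes $\left(\begin{smallmatrix}-\delta_X^{-d+1}\\ \mathrm{id}_{X^{-d+1}}\\ 0\end{smallmatrix}\right)$, whose component onto the middle summand is the identity, an isomorphism. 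Gaussian elimination then yields a complex $C'$, homotopy equivalent to $C(f)$ in $\K^b(\proj\La)$, obtained by deleting $C(f)^{-d}$ together with the middle summand $X^{-d+1}$ of $C(f)^{-d+1}$ and correcting the differential $C(f)^{-d+1}\to C(f)^{-d+2}$ by the usual formula; explicitly $C'$ is concentrated in degrees $[-d+1,0]$ with $(C')^{-d+1}=X^{-d+2}\oplus K$ and $(C')^{i}=C(f)^{i}$ for $i\geq -d+2$, all terms being projective. Thus $C(f)\cong C'\in\K^{[-d+1,0]}(\proj\La)$, so the triangle $X\xrightarrow{f}Y\to C(f)\to\Sigma X$ is a conflation in $\K^{[-d+1,0]}(\proj\La)$ and $f$ is an inflation.

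I expect the only real obstacle to be bookkeeping: checking that the change of coordinates at degree $-d+1$ is compatible with the adjacent differentials, that the corrected differential still squares to zero, and that $C'$ is genuinely a complex of projectives lying in degrees $[-d+1,0]$. If one prefers to avoid invoking Gaussian elimination as a black box, the same computation can be reorganised into an explicit isomorphism of complexes exhibiting $C(f)$ as the direct sum of $C'$ with the contractible complex $\bigl(0\to X^{-d+1}\xrightarrow{\mathrm{id}}X^{-d+1}\to 0\bigr)$ placed in degrees $[-d,-d+1]$, which gives the conclusion directly.
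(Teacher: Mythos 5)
Your argument is correct and is essentially the paper's own proof: the paper likewise uses the splitting $Y^{-d+1}\cong X^{-d+1}\oplus Z$ to exhibit $C(f)$ as the direct sum of a complex concentrated in $[-d+1,0]$ and the contractible complex $X^{-d+1}\xrightarrow{\mathrm{id}}X^{-d+1}$ in degrees $[-d,-d+1]$, which is exactly the ``reorganised'' form of your Gaussian elimination that you mention at the end. If anything, your version is slightly more careful about the differential out of degree $-d$ and about recording that the corrected differential into degree $-d+2$ must be adjusted.
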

\begin{proof}
    We need to show that $C(f)\in \K^{[-d+1,0]}(\proj\La)$. By definition, $C(f)$ is given as $$0\to X^{-d+1}\xrightarrow{\begin{bmatrix}
        \delta_X^{-d+1}\\ f^{-d+1}
    \end{bmatrix}}X^{-d+2}\oplus Y^{-d+1}\to \cdots X^0\oplus Y_1\to Y^0.$$ Since $f^{-d+1}$ is a split mono, so is $\begin{bmatrix}
        \delta_X^{-d+1}\\ f^{-d+1}
    \end{bmatrix}$. Thus, $C(f)$ is isomorphic to $$0\to X^{-d+1}\xrightarrow{\begin{bmatrix}
        1_{X^{-d+1}}\\0
    \end{bmatrix}} X^{-d+1}\oplus Z\to \cdots X^0\oplus Y_1\to Y^0$$ for some $Z\in\proj\La$. Thus $C(f)=$
    \begin{center}
        \begin{tikzcd}
      & 0 \arrow[r] & X^{-d+1} \arrow[r, "1_{X^{-d+1}}"] & X^{-d+1} \arrow[r]         & \cdots \arrow[r] & 0 \arrow[r]             & 0   \\
 &             &                                    & \oplus                     &                  &                         &     \\
      & 0 \arrow[r] & 0 \arrow[r]                        &  Z \arrow[r] & \cdots \arrow[r] & X^0\oplus Y^1 \arrow[r] & Y^0
\end{tikzcd}
    \end{center}Hence, it is quasi-isomorphic to $$ (Z \to \cdots \to  X^0\oplus Y^1 \to  Y^0)\in \K^{[-d+1,0]}(\proj\La).$$
\end{proof}

\begin{proposition}\label{liftconf}
    Let $\epsilon: X\xrightarrow{f} Y\xrightarrow{g} Z$ be a conflation in $\D^{[-d+2,0]}(\mmod\La)$. Then there exists a conflation $\epsilon': X'\xrightarrow{f'}Y'\xrightarrow{g'}Z'$ in $\K^{[-d+1,0]}(\proj\La)$ such that $\tau_{\geq -d+2}\epsilon'\cong \epsilon$.
\end{proposition}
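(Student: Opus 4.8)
The plan is to lift the morphism $f$ to $\K^{[-d+1,0]}(\proj\La)$ via the equivalence of Proposition \ref{equiv}, to repair the lift so that it becomes an inflation (so that its cone supplies the third term of a conflation automatically), and finally to check that $\tau_{\geq -d+2}$ carries the resulting conflation back to $\epsilon$. Since $\epsilon$ is a triangle in $\D^b(\mmod\La)$ we may identify $Z$ with $C(f)$; below, $\tau^{\leq -d+1}$ and $\tau^{\geq -d+2}=\tau_{\geq -d+2}$ denote the natural truncation functors of \S\ref{not}.

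First, using Proposition \ref{equiv} I would choose $\bar X,\bar Y\in\K^{[-d+1,0]}(\proj\La)$ with $\tau_{\geq -d+2}\bar X\cong X$, $\tau_{\geq -d+2}\bar Y\cong Y$ and, by fullness, a morphism $\bar f\colon\bar X\to\bar Y$ whose image under $\tau_{\geq -d+2}$ is identified with $f$. The cone $C(\bar f)$, computed in $\K^b(\proj\La)$, is a complex of projectives concentrated in degrees $[-d,0]$, its only term outside $[-d+1,0]$ being a copy of $\bar X^{-d+1}$ in degree $-d$; by Lemma \ref{split} this term disappears as soon as the degree $-d+1$ component of our morphism is a split monomorphism. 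So I would pass to $f'\colon X'\to Y'$ where $X':=\bar X$, $Y':=\bar Y\oplus\bar X^{-d+1}[d-1]$, with degree $-d+1$ component $\begin{bmatrix}\bar f^{-d+1}\\ 1\end{bmatrix}$ and all other components equal to those of $\bar f$; this is a chain map precisely because $\bar f$ is one, its $(-d+1)$-component is split mono, and since $\bar X^{-d+1}[d-1]\in\add\La[d-1]$ one still has $\tau_{\geq -d+2}Y'\cong Y$ with $\tau_{\geq -d+2}f'$ identified with $f$. By Lemma \ref{split}, $f'$ is an inflation, and I let $\epsilon'\colon X'\xrightarrow{f'}Y'\xrightarrow{g'}Z'$ be the resulting conflation in $\K^{[-d+1,0]}(\proj\La)$, with $Z'=C(f')$.

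It remains to show $\tau_{\geq -d+2}\epsilon'\cong\epsilon$. The image of $X'\xrightarrow{f'}Y'$ is already identified with $X\xrightarrow{f}Y$, so the task is to identify $\tau_{\geq -d+2}Z'$ with $Z$ compatibly with $g'$. I would apply the functorial truncation triangle $\tau^{\leq -d+1}(-)\to(-)\to\tau^{\geq -d+2}(-)$ to $f'$; since $\tau^{\leq -d+1}W\cong\Ho^{-d+1}(W)[d-1]$ for every $W\in\K^{[-d+1,0]}(\proj\La)$, the octahedral axiom yields a triangle
\[
A\longrightarrow C(f')\longrightarrow C(\tau_{\geq -d+2}f')\longrightarrow A[1],\qquad A:=C\bigl(\Ho^{-d+1}(f')\bigr)[d-1],
\]
in which $C(\tau_{\geq -d+2}f')\cong C(f)\cong Z$, while $A$, being a $(d-1)$-fold shift of the cone of a morphism of modules, lies in $\D^{\leq -d+1}(\mmod\La)$. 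Comparing the long exact cohomology sequences of this triangle and of $\epsilon$, the morphism $C(f')\to Z$ is an isomorphism on $\Ho^{k}$ for all $k\geq -d+2$, hence induces an isomorphism $\tau_{\geq -d+2}Z'\xrightarrow{\ \sim\ }\tau_{\geq -d+2}Z\cong Z$; reading this off the $3\times 3$ diagram of the octahedral axiom shows that the isomorphism intertwines $\tau_{\geq -d+2}g'$ with $g$, so $\tau_{\geq -d+2}\epsilon'\cong\epsilon$.

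I expect the last step to be where the work lies. The functor $\tau_{\geq -d+2}$ is not triangulated, so $\tau_{\geq -d+2}\epsilon'$ is not automatically a conflation and the comparison has to be produced by hand; what saves the argument is that the ``defect'' $A$ between $C(f')$ and $Z$ is concentrated in degrees $\leq -d+1$, exactly the range annihilated by $\tau_{\geq -d+2}$. The most delicate point is to keep track of compatibility with the morphisms $f',g'$ of the conflation, not merely with its objects.
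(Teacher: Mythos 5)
Your construction of the lifted conflation is the same as the paper's: lift $f$ to $\bar f\colon \bar X\to\bar Y$ in $\K^{[-d+1,0]}(\proj\La)$, enlarge the target by the summand $\bar X^{-d+1}[d-1]$ so that the degree $-d+1$ component becomes a split monomorphism, and invoke Lemma \ref{split} to get the inflation $f'$. Where you genuinely diverge is the verification that $\tau_{\geq -d+2}C(f')\cong Z$ compatibly with $g'$: the paper computes $\tau_{\geq -d+2}C(f')$ explicitly as a complex and writes down a quasi-isomorphism to $C(f)$ by hand, checking commutativity of the comparison diagram directly, whereas you deduce the isomorphism from the $3\times 3$ lemma applied to the truncation triangles of $X'$ and $Y'$, using that the defect $C(\tau^{\leq -d+1}f')\cong C(\Ho^{-d+1}(f'))[d-1]$ has cohomology concentrated in degrees $\leq -d+1$ and is therefore invisible to $\tau_{\geq -d+2}$ on the long exact sequence. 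Both arguments are correct. Yours is less computational and isolates the conceptual reason the truncation of the conflation survives; the price is that the compatibility with $g'$ rests on the $3\times 3$ lemma (which is exactly what controls the non-functoriality of cones that you flag), whereas the paper's explicit quasi-isomorphism settles that compatibility by inspection. You correctly identify the last step as the delicate one, and your treatment of it is sound.
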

\begin{proof}
    Using the equivalence between $\K^{b,-}(\proj\La)$ and $\D^b(\mmod\La)$, we can assume that $f$ is of the following form:
    \begin{center}
        \begin{tikzcd}
        \cdots \arrow[r] & P^{-d+1} \arrow[d, "f^{-d+1}"'] \arrow[r] & P^{-d+2} \arrow[r] \arrow[d, "f^{-d+2}"] & \cdots \arrow[r] & P^0 \arrow[r] \arrow[d, "f^0"] & 0 \\
        \cdots \arrow[r] & Q^{-d+1} \arrow[r]                      & Q^{-d+2} \arrow[r]                       & \cdots \arrow[r] & Q^0 \arrow[r]                  & 0
        \end{tikzcd}
    \end{center}
    This implies that $\epsilon\cong X\xrightarrow{f}Y\xrightarrow{g} C(f)$. Set $$X'=t_{\geq -d+1}(X)= 0\to P^{-d+1}\to P^{-d+2}\to\cdots\to P^0\to0,$$ and $$Y''=t_{\geq -d+1}(Y)= 0\to Q^{-d+1}\to Q^{-d+2}\to\cdots\to Q^0\to 0,$$ and $f''=t_{\geq -d+1}(f)$. Set $Y':=Y''\oplus P^{-d+1}[d-1]$. Let $i: X'\to P^{-d+1}[d-1]$ be the map that is identity in degree $-d+1$, and $0$ in the others. Then the map $f':=\begin{bmatrix}
        f''\\ i
    \end{bmatrix}: X'\to Y'$ is an inflation in $\K^{[-d+1,0]}(\proj\La)$ by Lemma \ref{split} because it is a split mono in degree $-d+1$. We claim that $\tau_{\geq -d+2}(X'\xrightarrow{f'}Y'\xrightarrow{g'}C(f'))\cong \epsilon$. 
    %$X=\cdots\to P^{-d}\to P^{-d+1}\to P^{-d+2}\to\cdots\to P^0\to0\to\cdots$, $Y=\cdots\to Q^{-d}\to Q^{-d+1}\to Q^{-d+2}\to\cdots\to Q^0\to\cdots$, and $f=(\cdots, f^{-d}, f^{-d+1},\cdots,f^0,0,\cdots)$ for $f^i:P^i\to Q_i$, $i\leq 0$.

    Using the previous proposition, we know that $\tau_{\geq -d+2}(f')\cong \tau_{\geq -d+2}(f'')\cong f$. By definition, 
    \begin{align*}
        \tau_{\geq -d+2}C(f') &= 0\to \frac{P^{-d+2}\oplus Q^{-d+1}\oplus P^{-d+1}}{\Ker{\delta_{C(f)}^{-d+1}}\oplus P^{-d+1}}\to P^{-d+3}\oplus Q^{-d+2}\to\cdots Q^0\to 0\\ &\cong 0\to \frac{P^{-d+2}\oplus Q^{-d+1}}{\Ker{\delta_{C(f)}^{-d+1}}}\to P^{-d+3}\oplus Q^{-d+2}\to\cdots Q^0\to 0,
    \end{align*}
    where $\delta_{C(f)}^{-d+1}$ denotes the $-d+1$ boundary map of $C(f)$. Since $C(f)\in \D^{[-d+2,0]}(\mmod\La)$, the following map gives a quasi-isomorphism between $\tau_{\geq -d+2}C(f')$ and $C(f)$.
    \begin{center}
        \begin{tikzcd}
        \cdots \arrow[r] & P^{-d+1}\oplus Q^{-d} \arrow[d] \arrow[r] & P^{-d+2}\oplus Q^{-d+1} \arrow[r] \arrow[d, "\pi"] & \cdots \arrow[r] & Q^0 \arrow[r] \arrow[d, "id"] & 0 \\
        \cdots \arrow[r] & 0 \arrow[r] & \frac{P^{-d+2}\oplus Q^{-d+1}}{\Ker {\delta_{C(f)}^{-d+1}}} \arrow[r]& \cdots \arrow[r] & Q^0 \arrow[r] & 0
        \end{tikzcd}
    \end{center}
    Denoting the above map by $i_{C(f')}$, it can be easily checked that the following diagram commutes, 
    \begin{center}
        \begin{tikzcd}
        X \arrow[r, "f"] \arrow[d, "i_{X'}"']              & Y \arrow[r, "g"] \arrow[d, "i_{Y'}"']              & C(f) \arrow[d, "i_{C(f')}"] \\
        \tau_{\geq-d+2}X' \arrow[r, "\tau_{\geq -d+2}f'"'] & \tau_{\geq-d+2}Y' \arrow[r, "\tau_{\geq -d+2}g'"'] & \tau_{\geq-d+2}C(f')       
        \end{tikzcd}
    \end{center} where $i_{X'}$ and $i_{Y'}$ are the canonical quasi-isomorphisms defined in the previous proposition. 
    
    Setting $\epsilon'=X'\xrightarrow{f'}Y'\to C(f')$, we get that $\tau_{\geq -d+1}\epsilon'\cong \epsilon$.
\end{proof}
The following corollary gives the dual of the above two propositions for the homotopy category of injectives. 
\begin{corollary}\label{dual}
\begin{enumerate}
    \item The functor $\tau_{\leq 0}$ induces an equivalence between the categories $\frac{\K^{[-d+2,1]}(\inj\La)}{\inj\La[-1]}$ and $\D^{[-d+2,0]}(\mmod\La)$.
    \item Given a conflation $\epsilon: X\xrightarrow{f} Y\xrightarrow{g} Z$ in $\D^{[-d+2,0]}(\mmod\La)$, there exists a conflation $\epsilon': X'\xrightarrow{f'}Y'\xrightarrow{g'}Z'$ in $\K^{[-d+2,1]}(\inj\La)$ such that $\tau_{\leq 0}\epsilon'\cong \epsilon$.
\end{enumerate}
\end{corollary}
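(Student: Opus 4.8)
The plan is to derive both parts from Propositions \ref{equiv} and \ref{liftconf}, applied to the opposite algebra $\La^{\mathrm{op}}$, and then transported through the standard $K$-duality. Write $D=\Hom_K(-,K)$ for the contravariant exact equivalence $\D^b(\mmod\La^{\mathrm{op}})\xrightarrow{\sim}\D^b(\mmod\La)$ induced by the duality of module categories; recall that $D$ interchanges projectives and injectives, satisfies $D(X[k])\cong(DX)[-k]$, and is compatible with cohomological truncation in the form $D\circ\tau_{\geq m}\cong\tau_{\leq -m}\circ D$ (immediate from $\Ho^i\circ D\cong D\circ \Ho^{-i}$ and exactness of $D$). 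I would then introduce the contravariant equivalence $\Theta:=(D-)[d-2]$ and record the following bookkeeping facts, each verified by a direct degree count: $\Theta$ restricts to contravariant equivalences $\K^{[-d+1,0]}(\proj\La^{\mathrm{op}})\xrightarrow{\sim}\K^{[-d+2,1]}(\inj\La)$ and $\D^{[-d+2,0]}(\mmod\La^{\mathrm{op}})\xrightarrow{\sim}\D^{[-d+2,0]}(\mmod\La)$; it carries $\add\La^{\mathrm{op}}[d-1]$ onto $\inj\La[-1]$; and, combining $D\circ\tau_{\geq m}\cong\tau_{\leq -m}\circ D$ with $\tau_{\leq m}\circ[k]\cong[k]\circ\tau_{\leq m+k}$, one obtains the intertwining $\Theta\circ\tau_{\geq -d+2}\cong\tau_{\leq 0}\circ\Theta$.

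For part (1), I apply Proposition \ref{equiv} to $\La^{\mathrm{op}}$ to get that $\tau_{\geq -d+2}$ induces an equivalence $\frac{\K^{[-d+1,0]}(\proj\La^{\mathrm{op}})}{\add\La^{\mathrm{op}}[d-1]}\xrightarrow{\sim}\D^{[-d+2,0]}(\mmod\La^{\mathrm{op}})$, and then conjugate by $\Theta$: since $\Theta$ takes the ideal of maps factoring through $\add\La^{\mathrm{op}}[d-1]$ bijectively onto the ideal of maps factoring through $\inj\La[-1]$, it descends to the two quotient categories, and the intertwining relation identifies the transported functor with the one induced by $\tau_{\leq 0}$.

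For part (2), given a conflation $\epsilon$ in $\D^{[-d+2,0]}(\mmod\La)$, I transport it through $\Theta^{-1}$ — a contravariant exact equivalence sends conflations to conflations with the arrows reversed, so this yields a conflation in $\D^{[-d+2,0]}(\mmod\La^{\mathrm{op}})$ — lift the latter by Proposition \ref{liftconf} to a conflation $\epsilon''$ in $\K^{[-d+1,0]}(\proj\La^{\mathrm{op}})$, and set $\epsilon':=\Theta\epsilon''$, a conflation in $\K^{[-d+2,1]}(\inj\La)$. Then $\tau_{\leq 0}\epsilon'\cong\Theta(\tau_{\geq -d+2}\epsilon'')\cong\Theta\Theta^{-1}\epsilon\cong\epsilon$ by the intertwining.

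I do not expect a genuine obstacle here, only the bookkeeping of the shift by $[d-2]$ and of the interaction between $D$ and the truncation functors; the one point that must be handled with care is that $\Theta$ is contravariant, so every "apply the Proposition and conjugate" step reverses conflations exactly once, which is precisely what makes $\tau_{\leq 0}$ — a truncation at the \emph{top} of a complex of injectives — the correct mirror of $\tau_{\geq -d+2}$, a truncation at the \emph{bottom} of a complex of projectives. As an alternative, one could instead reprove (1) and (2) directly by repeating the arguments of Propositions \ref{equiv} and \ref{liftconf} verbatim, replacing projective resolutions by injective coresolutions and the hard truncation $t_{\geq -d+1}$ by the hard truncation $t^{\leq 1}$ into $\K^{[-d+2,1]}(\inj\La)$.
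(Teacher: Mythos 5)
Your proposal is correct and is exactly the argument the paper intends: the paper states this corollary without proof, simply calling it ``the dual'' of Propositions \ref{equiv} and \ref{liftconf}, and your construction of $\Theta=(D-)[d-2]$ together with the intertwining $\Theta\circ\tau_{\geq -d+2}\cong\tau_{\leq 0}\circ\Theta$ supplies precisely the bookkeeping that is left implicit. The degree counts ($\K^{[-d+1,0]}(\proj\La^{\mathrm{op}})\mapsto\K^{[-d+2,1]}(\inj\La)$, $\add\La^{\mathrm{op}}[d-1]\mapsto\inj\La[-1]$) and the observation that the two contravariant passages reverse conflations an even number of times are all accurate.
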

Our next goal is to characterize the extensions between any two objects in $\K^{[-d+1,0]}(\proj\La)$ in terms of the morphisms between their images under the truncation functors. This will be used in the next section to give a correspondence between cotorsion pairs in $\K^{[-d+1,0]}(\proj\La)$ and torsion pairs in $\D^{[-d+2,0]}(\mmod\La)$. 

\begin{theorem}\label{Happel}\cite[\S~I.4.6]{H1}
    Let $X,Y\in \K^b(\proj\La)$. Then $$\Hom_{\K^b(\proj\La)}(X,Y)\cong \mathrm{D}\Hom_{\D^b(\mmod\La)}(Y,\nu X),$$ where $\nu$ denotes the Nakayama functor. 
\end{theorem}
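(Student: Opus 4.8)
The statement is the Auslander--Reiten (Serre duality) formula for the perfect derived category, so the plan is to reduce it to a natural isomorphism at the level of modules and then globalize to bounded complexes. Write $\mathrm{D}=\Hom_K(-,K)$ for $K$-linear duality and let $\nu=-\otimes_\La \mathrm{D}\La\cong \mathrm{D}\Hom_\La(-,\La)$ be the Nakayama functor; it restricts to an exact equivalence $\K^b(\proj\La)\xrightarrow{\sim}\K^b(\inj\La)$. Since $\K^b(\proj\La)$ is a full triangulated subcategory of $\D^b(\mmod\La)$ and $\nu X$ lies in $\K^b(\inj\La)\subseteq\D^b(\mmod\La)$, both Hom-spaces in the statement can be computed by genuine chain maps up to homotopy.

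First I would prove the module-level version: for $P\in\proj\La$ and $M\in\mmod\La$ there is an isomorphism $\Hom_\La(M,\nu P)\cong \mathrm{D}\Hom_\La(P,M)$, natural in both variables, obtained from the chain
$$\Hom_\La\bigl(M,\mathrm{D}\Hom_\La(P,\La)\bigr)\;\cong\;\mathrm{D}\bigl(\Hom_\La(P,\La)\otimes_\La M\bigr)\;\cong\;\mathrm{D}\Hom_\La(P,M),$$
where the first isomorphism is tensor--Hom adjunction together with $\mathrm{D}$-duality, and the second is the evaluation isomorphism $\Hom_\La(P,\La)\otimes_\La M\xrightarrow{\sim}\Hom_\La(P,M)$, which holds because $P$ is finitely generated projective. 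To globalize, I would pick $X^\bullet,Y^\bullet\in\mathrm{C}^b(\proj\La)$ representing $X$ and $Y$. Since $Y^\bullet$ is a bounded complex of projectives it is homotopically projective, so $\Hom_{\D^b(\mmod\La)}(Y,\nu X)=H^0\bigl(\Hom^\bullet_\La(Y^\bullet,\nu X^\bullet)\bigr)$, while $\Hom_{\K^b(\proj\La)}(X,Y)=H^0\bigl(\Hom^\bullet_\La(X^\bullet,Y^\bullet)\bigr)$ directly. Applying the module-level isomorphism in each pair of degrees produces an isomorphism of complexes $\Hom^\bullet_\La(Y^\bullet,\nu X^\bullet)\cong \mathrm{D}\,\Hom^\bullet_\La(X^\bullet,Y^\bullet)$; taking $H^0$ and using exactness of $\mathrm{D}$ gives $\Hom_{\D^b(\mmod\La)}(Y,\nu X)\cong \mathrm{D}\Hom_{\K^b(\proj\La)}(X,Y)$, and a final dualization (all spaces being finite-dimensional over $K$) yields the claim.

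An alternative to the last step is a dévissage: both $\Hom_{\D^b(\mmod\La)}(-,Y)$ and $\mathrm{D}\Hom_{\D^b(\mmod\La)}(Y,\nu(-))$ are cohomological functors on $\K^b(\proj\La)$ (the latter because $\nu$ is triangulated and $\mathrm{D}$ is exact), the module-level computation settles the case where $X$ is a shifted projective module, and the stupid-truncation triangles together with the five lemma then carry the isomorphism to all of $\K^b(\proj\La)$. I expect the only delicate point in either route to be naturality bookkeeping: in the Hom-complex approach one must check that the degreewise isomorphisms of the first step respect the Koszul signs of the total $\Hom$-complex, and in the dévissage approach one must first upgrade the module-level isomorphism to an honest natural transformation of cohomological functors so that the five lemma applies. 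The remaining inputs — the evaluation isomorphism for finitely generated projectives, and the homotopy-projectivity of bounded complexes of projectives — are standard.
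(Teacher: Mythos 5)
The paper does not prove this statement; it is quoted directly from Happel's book \cite[\S~I.4.6]{H1}. Your argument is correct and is essentially the standard proof given there: the functorial module-level duality $\Hom_\La(M,\nu P)\cong \mathrm{D}\Hom_\La(P,M)$ for $P$ finitely generated projective, globalized to bounded complexes via the total $\Hom$-complex (using that a bounded complex of projectives is homotopically projective, so both sides are computed by $H^0$ of the relevant $\Hom$-complexes), with only the routine sign bookkeeping left to check.
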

In the following lemma, we view $\K^{[-d+1,0]}(\proj\La)$ as an extriangulated category with $\Eb(X,Y)$ denoting the group of extensions from $X$ to $Y$. We will use this notation again in \S~\ref{sec:cotors}.
\begin{lemma}\label{Extlem}
    Let $X,Y\in\K^{[-d+1,0]}(\proj\La)$. Then 
    $$\Eb(X,Y)\cong \mathrm{D}\Hom_{\D^b(\mmod\La)}(\tau_{\geq -d+2}Y, \Sigma^{-1}\tau_{\leq -1}\nu X).$$
\end{lemma}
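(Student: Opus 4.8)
The plan is to compute $\Eb(X,Y)$ by rewriting it in terms of the bounded derived category and then applying Happel's duality (Theorem \ref{Happel}). Since $\K^{[-d+1,0]}(\proj\La)$ carries the extriangulated structure inherited from $\K^b(\proj\La)$, its extension group is $\Eb(X,Y) \cong \Hom_{\K^b(\proj\La)}(X, \Sigma Y)$, but only the ``truncated part'' of this Hom-group corresponds to genuine conflations inside $\K^{[-d+1,0]}(\proj\La)$; concretely, $\Eb_{\K^{[-d+1,0]}}(X,Y)$ is the subgroup (or quotient) of $\Hom_{\K^b(\proj\La)}(X,\Sigma Y)$ whose cone lies in $\K^{[-d+1,0]}(\proj\La)$ — equivalently, after applying $\tau_{\geq -d+2}$, it can be identified with a suitable $\Hom$ in $\D^{[-d+2,0]}(\mmod\La)$ using Propositions \ref{equiv} and \ref{liftconf}. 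So the first step is to pin down this identification: I would show $\Eb(X,Y) \cong \Hom_{\D^b(\mmod\La)}(\tau_{\geq -d+2} X, \Sigma \tau_{\geq -d+2}Y)$ modulo the maps that factor through $\add\La[d-1]$ shifted appropriately, and then argue that degree reasons kill those correction terms, so that in fact $\Eb(X,Y)$ is a direct summand / subquotient of $\Hom_{\D^b}(X,\Sigma Y)$ that one can read off.

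The second step is to apply Happel's formula: $\Hom_{\K^b(\proj\La)}(X,\Sigma Y) \cong \mathrm D\Hom_{\D^b(\mmod\La)}(\Sigma Y, \nu X) = \mathrm D\Hom_{\D^b(\mmod\La)}(Y, \Sigma^{-1}\nu X)$. Now $X \in \K^{[-d+1,0]}(\proj\La)$, so $\nu X$ has injective terms concentrated in degrees $[-d+1,0]$, i.e.\ $\nu X \in \D^{[-d+1,0]}(\mmod\La)$, and hence $\Sigma^{-1}\nu X \in \D^{[-d+2,1]}(\mmod\La)$. Splitting $\Sigma^{-1}\nu X$ via the truncation triangle $\tau_{\leq 0}(\Sigma^{-1}\nu X) \to \Sigma^{-1}\nu X \to \tau_{\geq 1}(\Sigma^{-1}\nu X)$ — equivalently the triangle relating $\Sigma^{-1}\tau_{\leq -1}\nu X$ and the top piece — I would apply $\Hom_{\D^b}(Y,-)$. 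The object $Y \in \D^{[-d+1,0]}$ has no morphisms from it into the part of $\Sigma^{-1}\nu X$ sitting in degree $\geq 1$ coming from $\tau_{\geq 0}\nu X$ once one accounts for the shift, and similarly the $\add\La[d-1]$ correction contributes only to a piece that sees $\Hom(Y,\nu(\La[d-1])\text{-type stuff})$ which vanishes or is absorbed; the surviving term is $\Hom_{\D^b}(Y, \Sigma^{-1}\tau_{\leq -1}\nu X)$. Finally, replacing $Y$ by $\tau_{\geq -d+2}Y$ is legitimate because $\Sigma^{-1}\tau_{\leq -1}\nu X \in \D^{\leq -2}\cdot$, hmm — more precisely $\tau_{\leq -1}\nu X \in \D^{\leq -1}$, so $\Sigma^{-1}\tau_{\leq -1}\nu X \in \D^{\leq -2}$, and morphisms from $Y$ into a complex concentrated in degrees $\leq -2$ factor through $\tau_{\geq -d+2}Y$ since the difference $Y/\tau_{\geq -d+2}Y$ lies in degree $\leq -d+1 \leq -2$ (using $d\geq 2$; for the argument to be exactly this clean one checks the cohomological degrees carefully). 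Assembling: $\Eb(X,Y) \cong \mathrm D\Hom_{\D^b(\mmod\La)}(\tau_{\geq -d+2}Y, \Sigma^{-1}\tau_{\leq -1}\nu X)$.

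The main obstacle I anticipate is Step 1 — the precise bookkeeping of which part of $\Hom_{\K^b(\proj\La)}(X,\Sigma Y)$ is the extriangulated $\Eb(X,Y)$ of the subcategory $\K^{[-d+1,0]}(\proj\La)$, since extensions in an extension-closed subcategory of a triangulated category are computed by those triangles whose third term stays in the subcategory, and one must verify that this condition, after truncation, matches exactly the target $\Hom$-space and does not introduce spurious cokernels or kernels. The cleanest route is probably to use Proposition \ref{liftconf} (and Proposition \ref{equiv}) to set up a bijection between conflations in $\D^{[-d+2,0]}(\mmod\La)$ with fixed end terms and conflations in $\K^{[-d+1,0]}(\proj\La)$, thereby transporting the computation to $\D^{[-d+2,0]}(\mmod\La)$ first, and only then invoke Happel duality on the ambient categories; this sidesteps having to describe the subgroup of $\Hom_{\K^b}(X,\Sigma Y)$ directly. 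The remaining steps (the degree/truncation vanishing arguments and the identification of $\nu$ with the shift appearing in the statement) are routine $t$-structure and co-$t$-structure manipulations.
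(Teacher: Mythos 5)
Your Step 2 is essentially the paper's proof: apply Happel duality to get $\Eb(X,Y)\cong\mathrm{D}\Hom_{\D^b(\mmod\La)}(\Sigma Y,\nu X)$, then use the two truncation triangles (one truncating $\Sigma Y$ from below, one truncating $\nu X$ from above) together with vanishing of $\Hom$'s forced by comparing supports of cohomology. That part is correct, and the final replacement of $Y$ by $\tau_{\geq -d+2}Y$ works for the reason you indicate (the discarded piece $\tau_{\leq -d+1}\Sigma Y$ has cohomology in degrees $\leq -d$, while $\nu X\in\D^{\geq -d+1}(\mmod\La)$), though your degree bookkeeping in that sentence is garbled and should be redone by comparing cohomological supports rather than ``degrees $\leq -2$''.

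The problem is your Step 1, which you identify as the main obstacle: it is not an obstacle, and the workaround you sketch is both unnecessary and misguided. For an extension-closed subcategory $\C$ of a triangulated category $\T$, the inherited extriangulated structure has $\Eb_{\C}(X,Y)$ equal to the \emph{entire} group $\Hom_{\T}(X,\Sigma Y)$ — not a subgroup, not a quotient. The reason is that a class $\delta\in\Hom_{\T}(X,\Sigma Y)$ is realized by a triangle $Y\to E\to X\xrightarrow{\delta}\Sigma Y$, and the object that must lie in $\C$ is the \emph{middle term} $E$, not the cone of $\delta$ (which is $\Sigma E$ anyway); for $\C=\K^{[-d+1,0]}(\proj\La)$ the middle term is the complex with components $X^i\oplus Y^i$, so it automatically lies in $\K^{[-d+1,0]}(\proj\La)$. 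This is exactly why the paper declares $\Eb^i(-,-)=\Hom(-,\Sigma^i-)$ at the start of \S\ref{sec:cotors}. Consequently there are no ``correction terms'' modulo $\add\La[d-1]$ to kill, and the proposed detour through Propositions \ref{equiv} and \ref{liftconf} (transporting conflations to $\D^{[-d+2,0]}(\mmod\La)$ first) should be deleted: it would force you to set up a Yoneda-type correspondence between equivalence classes of conflations and a $\Hom$-group that the direct argument never needs. Once you replace Step 1 by the single observation $\Eb(X,Y)=\Hom_{\K^b(\proj\La)}(X,\Sigma Y)$, your proof is the paper's.
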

\begin{proof}
Using Theorem \ref{Happel}, we get that $$\Eb(X,Y)\cong\mathrm{D}\Hom_{\K^b(\proj\La)}(X,\Sigma Y)\cong \mathrm{D}\Hom_{\D^b(\mmod\La)}(\Sigma Y,\nu X).$$

The triangle $\tau_{\leq -d}\Sigma Y\to \Sigma Y\to \tau_{\geq -d+1}\Sigma Y$ gives the exact sequence $$\Eb^{-1}(\tau_{\leq -d}\Sigma Y, \nu X)\to \Hom(\tau_{\geq -d+1}\Sigma Y,\nu X)\to \Hom(\Sigma Y,\nu X)\to \Hom(\tau_{\leq -d}\Sigma Y,\nu X).$$ By comparing the support of the cohomology of $\tau_{\leq -d}\Sigma Y$ and $\nu X$, we see that $\Eb^{-1}(\tau_{\leq -d}\Sigma Y, \nu X)$and $\Hom(\tau_{\leq -d}\Sigma Y,\nu X)$ vanish. Thus, $$\Hom(\tau_{\geq -d+1}\Sigma Y,\nu X)\cong \Hom(\Sigma Y,\nu X).$$

Now, the triangle $\tau_{\leq -1}\nu X\to \nu X\to \tau_{\geq 0}\nu X$ gives the exact sequence 
\begin{equation*}
    \begin{tikzcd}[column sep=huge]
        \Eb^{-1}(\tau_{\geq -d+1}\Sigma Y,\tau_{\geq 0}\nu X) \arrow[r] \arrow[d, phantom, ""{coordinate, name=Z}]& \Hom(\tau_{\geq -d+1}\Sigma Y,\tau_{\leq -1}\nu X) \arrow[dl, rounded corners,
        to path={ -- ([xshift=2ex]\tikztostart.east)
            |- (Z) [near end]\tikztonodes
            -| ([xshift=-2ex]\tikztotarget.west)
            -- (\tikztotarget)}] \\
        \Hom(\tau_{\geq -d+1}\Sigma Y,\nu X) \arrow[r] & \Hom(\tau_{\geq -d+1}\Sigma Y, \tau_{\geq 0}\nu X).
    \end{tikzcd}
\end{equation*}
Again, we see that $\Eb^{-1}(\tau_{\geq -d+1}\Sigma Y,\tau_{\geq 0}\nu X)$ and $\Hom(\tau_{\geq -d+1}\Sigma Y, \tau_{\geq 0}\nu X)$ vanish. Thus, 
\begin{equation*}
    \begin{split}
        \Hom(\tau_{\geq -d+1}\Sigma Y,\nu X)&\cong\Hom(\tau_{\geq -d+1}\Sigma Y,\tau_{\leq -1}\nu X)\\ &\cong \Hom(\Sigma\tau_{\geq -d+2}Y, \tau_{\leq -1}\nu X)\\ &\cong \Hom(\tau_{\geq -d+2}Y, \Sigma^{-1}\tau_{\leq -1}\nu X).
    \end{split}
\end{equation*}
\end{proof}

 We need the following technical lemmas for the subsequent sections.

\begin{lemma}\label{techlem}
    Let $f:X\to Y\in \K^{[-d+1,0]}(\proj\La)$. Then $$\tau_{\geq -d+2}C(f)\cong \tau_{\geq -d+2}C(\tau_{\geq -d+2}f).$$
\end{lemma}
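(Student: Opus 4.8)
The plan is to produce one canonical morphism between the two objects and to check that it becomes an isomorphism after applying $\tau_{\geq -d+2}$, by a cohomological degree count; throughout, cones are formed in the ambient triangulated category $\D^b(\mmod\La)$.

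For $A\in\D^b(\mmod\La)$ let $\eta_A\colon A\to\tau_{\geq -d+2}A$ be the canonical morphism appearing in the $t$-structure triangle $\tau_{\leq -d+1}A\to A\xrightarrow{\eta_A}\tau_{\geq -d+2}A\to\Sigma\tau_{\leq -d+1}A$. By naturality of $\eta$ the square
\begin{center}
\begin{tikzcd}
X \arrow[r, "f"] \arrow[d, "\eta_X"'] & Y \arrow[d, "\eta_Y"] \\
\tau_{\geq -d+2}X \arrow[r, "\tau_{\geq -d+2}f"'] & \tau_{\geq -d+2}Y
\end{tikzcd}
\end{center}
commutes in $\D^b(\mmod\La)$. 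Completing this square to a $3\times 3$ diagram of distinguished triangles, I obtain a morphism $h\colon C(f)\to C(\tau_{\geq -d+2}f)$ whose cone $C(h)$ fits into a triangle
$$C(\eta_X)\longrightarrow C(\eta_Y)\longrightarrow C(h)\longrightarrow\Sigma C(\eta_X).$$

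Next I would locate the two outer cones. Since $X\in\K^{[-d+1,0]}(\proj\La)$ its cohomology is concentrated in degrees $[-d+1,0]$, so $\tau_{\leq -d+1}X$ is quasi-isomorphic to $\Ho^{-d+1}(X)[d-1]$ and therefore $C(\eta_X)\cong\Sigma\tau_{\leq -d+1}X$ lies in $\D^{\leq -d}(\mmod\La)$; the same holds for $C(\eta_Y)$. As $\D^{\leq -d}(\mmod\La)$ is closed under extensions and under $\Sigma$, the triangle above forces $C(h)\in\D^{\leq -d}(\mmod\La)$, that is, $\Ho^i(C(h))=0$ for all $i\geq -d+1$. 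Now feed the triangle $C(f)\xrightarrow{h}C(\tau_{\geq -d+2}f)\to C(h)\to\Sigma C(f)$ into the long exact cohomology sequence: for every $i\geq -d+2$ one has $\Ho^{i-1}(C(h))=\Ho^{i}(C(h))=0$, so $\Ho^{i}(h)$ is an isomorphism. Since $\Ho^i(\tau_{\geq -d+2}(-))$ equals $\Ho^i(-)$ for $i\geq -d+2$ and vanishes for $i\leq -d+1$, the morphism $\tau_{\geq -d+2}(h)$ is an isomorphism on all cohomology, hence an isomorphism in $\D^b(\mmod\La)$, which is exactly the asserted $\tau_{\geq -d+2}C(f)\cong\tau_{\geq -d+2}C(\tau_{\geq -d+2}f)$.

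The main point to be careful about is the $3\times 3$ diagram: one must invoke it so that the bottom row is a genuine distinguished triangle, making $C(h)$ literally the cone of $h$. After that everything is routine bookkeeping of cohomological degrees, driven entirely by the fact that $\tau_{\leq -d+1}X$ and $\tau_{\leq -d+1}Y$ are cohomologically concentrated in degree $-d+1$. Alternatively one could argue by hand, computing both complexes explicitly and matching their degree-$(-d+1)$ terms, but the argument above avoids that computation.
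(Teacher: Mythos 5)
Your argument is correct, and it takes a genuinely different route from the paper. The paper proves the lemma by brute force at the chain level: it writes out both $\tau_{\geq -d+2}C(f)$ and $\tau_{\geq -d+2}C(\tau_{\geq -d+2}f)$ as explicit complexes (whose only difference is in the degree $-d+1$ term, a quotient of $X^{-d+2}\oplus Y^{-d+1}$ versus a quotient of $X^{-d+2}\oplus Y^{-d+1}/\Ker\delta_Y^{-d+1}$) and exhibits an explicit isomorphism of complexes between them — exactly the ``by hand'' alternative you mention at the end. Your proof instead stays entirely in $\D^b(\mmod\La)$: the naturality square for $\eta$, the $3\times 3$ lemma producing $h\colon C(f)\to C(\tau_{\geq -d+2}f)$ with $C(h)$ an extension of $\Sigma C(\eta_X)$ by $C(\eta_Y)$, the observation that both outer cones lie in $\D^{\leq -d}$, and the long exact cohomology sequence showing $\Ho^i(h)$ is invertible for $i\geq -d+2$. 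The degree bookkeeping is right: for $i\geq -d+2$ both $\Ho^{i-1}(C(h))$ and $\Ho^i(C(h))$ vanish, and a map in $\D^b(\mmod\La)$ inducing isomorphisms on all cohomology is an isomorphism. What your approach buys is generality and the avoidance of any chain-level computation — the same argument works whenever one truncates a morphism of triangles and the cones of the truncation units land in the kernel of the truncation. What it gives up is explicitness: the morphism $h$ supplied by the $3\times 3$ lemma is not canonical (the completion of a commutative square to a morphism of triangles is not unique), whereas the paper's isomorphism is a concrete map of complexes. For the uses of this lemma elsewhere in the paper only the existence of an isomorphism of objects is needed, so either proof suffices.
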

\begin{proof}
    Suppose $f$ is of the following form. 
    \begin{center}
        \begin{tikzcd}
        X^{-d+1} \arrow[r, "\delta_X^{-d+1}"] \arrow[d, "f^{-d+1}"] & \cdots \arrow[r] & X^{-1} \arrow[r, "\delta_X^{-1}"] \arrow[d, "f^{-1}"] & X^0 \arrow[d, "f^0"] \\Y^{-d+1} \arrow[r, "\delta_Y^{-d+1}"'] & \cdots \arrow[r] & Y^{-1} \arrow[r, "\delta_Y^{-1}"']                    & Y^0  \end{tikzcd}
    \end{center}
    Then, by definition, $\tau_{\geq -d+2}C(f)$ is 
    \begin{center}
        \begin{tikzcd}
            \frac{X^{-d+2}\oplus Y^{-d+1}}{\Ker {\begin{bmatrix}
                -\delta_X^{-d+2} & 0\\f^{-d+2}&\delta_Y^{-d+1}
            \end{bmatrix}}} \arrow[r]& X^{-d+3}\oplus Y^{-d+2}\arrow[r]&\cdots \arrow[r] & X^0\oplus Y^{-1} \arrow[r]& Y^0 .
        \end{tikzcd}
    \end{center}
    On the other hand, $\tau_{\geq -d+2}C(\tau_{\geq -d+2}f)$ is given as
    \begin{center}
        \begin{tikzcd}
            \frac{X^{-d+2}\oplus \frac{Y^{-d+1}}{\Ker {\delta_{Y}^{-d+1}}}}{\Ker {\begin{bmatrix}
                -\delta_X^{-d+2} & 0\\f^{-d+2}&\overline{\delta_Y^{-d+1}}
            \end{bmatrix}}} \arrow[r]& X^{-d+3}\oplus Y^{-d+2}\arrow[r]& \cdots \arrow[r] & X^0\oplus Y^{-1} \arrow[r]& Y^0 .
        \end{tikzcd}
    \end{center}
    The following map gives an isomorphism between the two since the map ${\overline{(x,y)}\mapsto\overline{(x,\overline{y})}}$ is an isomorphism.
    \begin{center}
        \begin{tikzcd}
        \frac{X^{-d+2}\oplus Y^{-d+1}}{\Ker {\begin{bmatrix}                 -\delta_X^{-d+2} & 0\\f^{-d+2}&\delta_Y^{-d+1}             \end{bmatrix}}} \arrow[r] \arrow[d, "{\overline{(x,y)}\mapsto\overline{(x,\overline{y})}}"] & X^{-d+3}\oplus Y^{-d+2}\arrow[r]\arrow[d, "id"]& \cdots \arrow[r] & X^0\oplus Y^{-1} \arrow[r] \arrow[d, "id"] & Y^0 \arrow[d, "id"] \\
        \frac{X^{-d+2}\oplus \frac{Y^{-d+1}}{\Ker {\delta_{Y}^{-d+1}}}}{\Ker {\begin{bmatrix} -\delta_X^{-d+2} & 0\\f^{-d+2}&\overline{\delta_Y^{-d+1}}  \end{bmatrix}}} \arrow[r]& X^{-d+3}\oplus Y^{-d+2}\arrow[r]& \cdots \arrow[r] & X^0\oplus Y^{-1} \arrow[r]   & Y^0                
        \end{tikzcd}
    \end{center}
\end{proof}
Recall that a full subcategory $\X$ of a category $\C$ is called \emph{contravariantly finite} (resp. \emph{covariantly finite}) if for all $C\in\C$, there exists $f_C: X_C\to C$ (resp. $g_C: C\to X_C$) with $X_C\in\X$ such that for all $X\in \X$, $\Hom(X,X_C)\xrightarrow{\Hom(X,f_C)} \Hom(X,C)$ (resp. $\Hom(X_C,X)\xrightarrow{\Hom(g_C,X)}\Hom(C,X)$) is surjective.
\begin{lemma}\label{approximations}
    Let $\Y\subseteq \K^{[-d+1,0]}(\proj\La)$ be an additive subcategory containing $\add\La[d-1]$. Then $\Y$ is covariantly (resp. contravariantly) finite in $\K^{[-d+1,0]}(\proj\La)$ if and only if $\tau_{\geq -d+2}\Y$ is covariantly (resp. contravariantly) finite in $\D^{[-d+2,0]}(\proj\La)$.
\end{lemma}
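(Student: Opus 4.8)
The plan is to transport approximations back and forth across the equivalence $\tau_{\geq -d+2}\colon \K^{[-d+1,0]}(\proj\La)/\add\La[d-1]\xrightarrow{\sim}\D^{[-d+2,0]}(\mmod\La)$ of Proposition \ref{equiv}. I will treat only the covariant case; the contravariant case is dual. Write $\K=\K^{[-d+1,0]}(\proj\La)$, $\D=\D^{[-d+2,0]}(\mmod\La)$, $T=\tau_{\geq-d+2}$, and $\mathcal{I}=\add\La[d-1]$. The first observation I would record is that, since $T$ kills $\mathcal{I}$ and $\Y\supseteq\mathcal{I}$, a morphism $g\colon C\to Y$ with $Y\in\Y$ is a left $\Y$-approximation in $\K$ if and only if for every $Y'\in\Y$, every map $C\to Y'$ factors through $g$ \emph{in the quotient} $\K/\mathcal{I}$; indeed any map that becomes zero in the quotient factors through an object of $\mathcal{I}\subseteq\Y$, hence automatically factors through $g$ once $g$ is "approximating modulo $\mathcal{I}$". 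This reduces both sides of the equivalence to a statement purely about the quotient category, where $T$ is an equivalence.

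Concretely, for the forward direction assume $\Y$ is covariantly finite in $\K$. Given $M\in\D$, pick $C\in\K$ with $TC\cong M$ (essential surjectivity, Proposition \ref{equiv}), take a left $\Y$-approximation $g\colon C\to Y_C$ in $\K$, and set $g_M:=Tg\colon M\to T Y_C$ with $TY_C\in T\Y$. To check this is a left $T\Y$-approximation: given any $M\to TY'$ with $Y'\in\Y$, fullness of $T$ (Proposition \ref{equiv}) lifts it to some $h\colon C\to Y'$ in $\K$; since $g$ is a left $\Y$-approximation, $h=h'g$ for some $h'\colon Y_C\to Y'$; applying $T$ gives the required factorization. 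For the converse, assume $T\Y$ is covariantly finite in $\D$. Given $C\in\K$, take a left $T\Y$-approximation $u\colon TC\to TY_C$ (we may write the target as $TY_C$ with $Y_C\in\Y$ since $T|_\Y$ is essentially surjective onto $T\Y$), lift $u$ through fullness to $g\colon C\to Y_C$ in $\K$, and claim $g$ is a left $\Y$-approximation. Given $h\colon C\to Y'$ with $Y'\in\Y$: if $Th$ factors as $u$ followed by some $TY_C\to TY'$, then by fullness that second map lifts to $\bar h'\colon Y_C\to Y'$, and $h-\bar h' g$ lies in $\ker T$, hence factors through an object of $\mathcal{I}\subseteq\Y$; writing $h-\bar h'g = v\circ w$ with $w\colon C\to I$, $I\in\mathcal{I}$, and using that $g^{-d+1}$ together with the structure of $\mathcal{I}$ lets the $\mathcal{I}$-part be absorbed — more cleanly, since $I\in\mathcal{I}\subseteq\Y$ and $g$ need only be approximating, one argues directly that $C\to I$ itself need not factor through $g$, so instead I would enlarge $Y_C$ at the outset to $Y_C\oplus (\text{a fixed big enough object of }\mathcal{I})$ and let $g$ include the relevant map to the $\mathcal{I}$-summand, after which every $h$ factors through $g$ on the nose.

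The step I expect to be the main obstacle is exactly this last point: the quotient functor $T$ is an equivalence only after killing $\mathcal{I}$, so a lift of an approximation in $\D$ is an approximation in $\K$ only "up to $\mathcal{I}$", and one must genuinely handle the correction term $h-\bar h'g\in\ker T$. The clean fix is structural: because $\mathcal{I}=\add\La[d-1]$ consists of objects concentrated in degree $-d+1$ (so they are projective objects in the extriangulated category $\K$ in the relevant sense, and in particular every map $C\to\La[d-1]$ is detected on $C^{-d+1}$), one can always replace a chosen $Y_C$ by $Y_C\oplus \La^{(n)}[d-1]$ for suitable $n$ and modify $g$ so that its component $C\to\La^{(n)}[d-1]$ surjects onto $\Hom(C,\La[d-1])$-worth of maps; since $\mathcal{I}\subseteq\Y$, the enlarged object is still in $\Y$ and $T$ is unchanged on it. With that adjustment in place, the correction term factors through $g$, and $g$ is a genuine left $\Y$-approximation. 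I would isolate this "$\mathcal{I}$-absorption" as a short sublemma (every map from $C\in\K$ to an object of $\mathcal{I}$ factors through a fixed left $\mathcal{I}$-approximation of $C$, which exists because $\mathcal{I}=\add\La[d-1]$ is functorially finite in $\K$) and then the two directions above go through verbatim. The contravariant statement follows by the dual argument, using Corollary \ref{dual} in place of Proposition \ref{equiv} if one prefers to work on the injective side, or simply dualizing within $\K$.
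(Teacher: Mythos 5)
Your proposal is correct and follows essentially the same route as the paper: the forward direction via fullness and essential surjectivity of $\tau_{\geq -d+2}$, and the converse by lifting a left $\tau_{\geq -d+2}\Y$-approximation and absorbing the correction term lying in $\ker\tau_{\geq -d+2}$ by adjoining a left $\add\La[d-1]$-approximation $g\colon Z\to P$ as an extra summand, which is exactly the paper's map $\begin{bmatrix}f\\g\end{bmatrix}\colon Z\to Y\oplus P$. The only caveat is that the ``if and only if'' reduction claimed in your first paragraph is false as stated (a map $C\to I$ with $I\in\add\La[d-1]$ need not factor through $g$), but your final argument does not rely on it, since the $\mathcal{I}$-absorption step corrects precisely for this.
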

\begin{proof}
    We will only prove the case for covariant finiteness. The case for contravariant finiteness will be dual.
    \medskip
    $(\implies)$ Follows from the fact that the functor $\tau_{\geq -d+2}$ is full and essentially surjective. 
    \medskip
    $(\impliedby)$ Let $Z\in \K^{[-d+1,0]}(\proj\La)$ and $f':\tau_{\geq -d+2}Z\to Y'$ be a left $\tau_{\geq -d+2}\Y$-approximation of $\tau_{\geq -d+2}Z$. Since $\tau_{\geq -d+2}$ is full and essentially surjective, there exists $f:Z\to Y\in \K^{[-d+1,0]}(\proj\La)$ such that $\tau_{\geq -d+2}f=f'$. Let $g: Z\to P$ be a left $\add\La[d-1]$-approximation of $Z$, which exists since $\add\La[d-1]$ is functorially finite. We claim that $\begin{bmatrix}
        f\\g
    \end{bmatrix}: Z\to Y\oplus P$ is a left $\Y$-approximation of $Z$. 

    Let $f_1: Z\to Y_1$ be a morphism with $Y_1\in \Y$. Since $f'$ is an approximation, there exists $g_1:Y\to Y_1$ such that $f'\tau_{\geq -d+2}g_1=\tau_{\geq -d+2}f_1$. Therefore, the map $g_1f-f_1$ factors through some $P'\in \add\La[d-1]$ as shown below. 
    \begin{center}
    \begin{tikzcd}
    Z \arrow[rr, "g_1f-f_1"] \arrow[rd, "h"'] &  & Y_1 \\
    & P' \arrow[ru, "k"'] &    
    \end{tikzcd}
    \end{center}
    Since $g:Z\to P$ is a left $\add\La[d-1]$-approximation of $Z$, there exists some $l:P\to P'$ such that $lg=h$. This gives the following commutative triangle and we are done.
    \begin{center}
    \begin{tikzcd}
    Z \arrow[rr, "\begin{bmatrix}f\\g\end{bmatrix}"] \arrow[rrd, "f_1"'] &  & Y\oplus P \arrow[d, "\begin{bmatrix}g_1 \  -kl\end{bmatrix}"] \\
    &  & Y_1                                                
    \end{tikzcd}
    \end{center}
\end{proof}
\begin{lemma}\label{inflation}
    For any morphism $f:A \to B\in \K^{[-d+1,0]}(\proj\La)$, there exists an inflation $f': A\to B'\in \K^{[-d+1,0]}(\proj\La)$ such that $\tau_{\geq -d+2}f\cong \tau_{\geq -d+2}f'$.
\end{lemma}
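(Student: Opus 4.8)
The plan is to reuse, essentially verbatim, the construction from the proof of Proposition \ref{liftconf}. Given $f\colon A\to B$ in $\K^{[-d+1,0]}(\proj\La)$, set $B':=B\oplus A^{-d+1}[d-1]$, where $A^{-d+1}[d-1]$ denotes the stalk complex with $A^{-d+1}$ placed in degree $-d+1$; this object lies in $\add\La[d-1]\subseteq\K^{[-d+1,0]}(\proj\La)$. Let $i\colon A\to A^{-d+1}[d-1]$ be the morphism of complexes which is $\mathrm{id}_{A^{-d+1}}$ in degree $-d+1$ and zero in all other degrees. This is a genuine chain map because $A^{-d+1}$ is the lowest nonzero term of $A$ (so there is no incoming differential to commute with) and the target has zero differential (so there is no outgoing one). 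Define $f':=\begin{bmatrix} f\\ i\end{bmatrix}\colon A\to B'$.

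First I would check that $f'$ is an inflation. In degree $-d+1$ the map $f'^{-d+1}=\begin{bmatrix} f^{-d+1}\\ \mathrm{id}_{A^{-d+1}}\end{bmatrix}\colon A^{-d+1}\to B^{-d+1}\oplus A^{-d+1}$ is a split monomorphism, with retraction the projection onto the second summand. Hence Lemma \ref{split} applies and $C(f')\in\K^{[-d+1,0]}(\proj\La)$, so $f'$ is an inflation in the extriangulated category $\K^{[-d+1,0]}(\proj\La)$. Next I would check that $\tau_{\geq -d+2}f'\cong\tau_{\geq -d+2}f$. The complex $A^{-d+1}[d-1]$ has cohomology concentrated in degree $-d+1<-d+2$, so $\tau_{\geq -d+2}\bigl(A^{-d+1}[d-1]\bigr)=0$. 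By additivity of $\tau_{\geq -d+2}$ the inclusion $B\hookrightarrow B'$ and its retraction induce mutually inverse isomorphisms $\tau_{\geq -d+2}B\cong\tau_{\geq -d+2}B'$, and under this identification $\tau_{\geq -d+2}f'$ is sent to $(\tau_{\geq -d+2}f,\,\tau_{\geq -d+2}i)=(\tau_{\geq -d+2}f,\,0)=\tau_{\geq -d+2}f$; thus $\tau_{\geq -d+2}f\cong\tau_{\geq -d+2}f'$ as morphisms out of $\tau_{\geq -d+2}A$, which is exactly the claim.

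The \textbf{main obstacle} is essentially absent: the whole content is the degree bookkeeping guaranteeing that the summand $A^{-d+1}[d-1]$ is annihilated by the truncation functor, together with the trivial verification that $i$ is a chain map, neither of which requires any work. The only point worth stating carefully in the write-up is the precise meaning of ``$\cong$'' for morphisms — best phrased, as above, as an isomorphism in the arrow category of $\D^{[-d+2,0]}(\mmod\La)$ induced by the split pair $B\rightleftarrows B'$ — so that the statement can be used cleanly downstream.
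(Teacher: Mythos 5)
Your construction is exactly the one used in the paper: $B'=B\oplus A^{-d+1}[d-1]$ with $f'=\begin{bmatrix} f\\ i\end{bmatrix}$, an appeal to Lemma \ref{split} via the split mono in degree $-d+1$, and the observation that the added summand is killed by $\tau_{\geq -d+2}$. The argument is correct and coincides with the paper's proof, with your write-up merely spelling out a couple of routine verifications the paper leaves implicit.
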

\begin{proof}
    Let $f':A\to A^{-d+1}[d-1]\oplus B$ be the following morphism. 
    \begin{center}
        \begin{tikzcd}
        A^{-d+1} \arrow[d, "\begin{bmatrix}id\\ f^{-d+1}\end{bmatrix}"'] \arrow[r, "\delta^{-d+1}_A"] & A^{-d+2} \arrow[d, "f^{-d+1}"] \arrow[r] & \cdots \arrow[d] \arrow[r] & A^{-1} \arrow[d, "f^{-1}"] \arrow[r] & A^0 \arrow[d, "f^0"] \\
        A^{-d+1}\oplus B^{-d+1} \arrow[r, "{[0 \ \ \delta_B^{-d+1}]}"']     & B^{-d+2} \arrow[r]& \cdots \arrow[r] & B^{-1} \arrow[r]& B^0
        \end{tikzcd}
    \end{center}
    Then, clearly, $\tau_{\geq -d+2}f\cong \tau_{\geq -d+2}f'$. Since $f'^{-d+1}$ is a split mono, using Lemma \ref{split}, we get that $f'$ is an inflation in $\K^{[-d+1,0]}(\proj\La)$. 
\end{proof}

\subsection{From cotorsion pairs to torsion pairs}\label{sec:cotors}
Let $(\C,\Eb,\s)$ be an extriangulated category equipped with a bivariant $\delta$-functor $(G^\bullet, \delta_\#, \delta^\#)$ such that $G^0=\Hom_\C$, $G^1=\Eb$, and for any $\Eb$-extension $\delta\in \Eb(C,A)$, $\delta^0_\#=A_\delta$ and $\delta^{\#,0}=B_\delta$. In particular, $(\C,\Eb,\s, G^{-1})$ is an extriangulated category with a negative first extension in the sense of \cite[Definition~2.3]{AET}. 

Henceforth, we will consider $\D^{[-d+2,0]}(\mmod\La)$ and $\K^{[-d+1,0]}(\proj\La)$ as extriangulated categories with bivariant $\delta$-functors $G^i(-,-):=\Eb^i(-,-)=\Hom(-,\Sigma^i-)$, where $\Sigma$ is the suspension functor of the triangulated categories $\D^b(\mmod\La)$ and $\K^{b}(\proj\La)$ respectively. 

The notion of cotorsion pairs was first defined in \cite{Sa} for abelian categories. These were then generalised to exact categories \cite{H2, St}, triangulated categories \cite{N}, and extriangulated categories \cite{NP}. We will use the following definitions in this work.  
\begin{definition}
    Let $\X,\Y$ be full subcategories of $\C$. 
    \begin{enumerate}
        \item The pair $(\X,\Y)$ is called a \emph{cotorsion pair} if 
        \begin{enumerate}
        \item $\Eb(x,\Y)=0$ if and only if $x\in\X$;
        \item $\Eb(\X,y)=0$ if and only if $y\in\Y$.
    \end{enumerate}
        \item  A cotorsion pair $(\X ,\Y)$ is called \emph{hereditary} if $G^k(\X,\Y)=0$ for all $k\geq 2$.
        \item A cotorsion pair $(\X ,\Y)$ is called \emph{complete} if for each $c \in \C$
        \begin{enumerate}
            \item[(Ca)] there exists a conflation $c \to y \to x$ with $x \in \X$ and $y \in \Y$;
            \item[(Cb)] there exists a conflation $y' \to x' \to c$ with $x' \in \X$ and $y' \in \Y$.
        \end{enumerate}
    \end{enumerate}
    \end{definition}

\begin{remark}\label{complete}
    Using \cite[Remark~4.4]{NP}, to check if a pair of full subcategories $(\X,\Y)$ is complete, it is enough to check the weaker conditions $\Eb(\X,\Y)=0$, (Ca), and (Cb).
\end{remark}
\begin{remark}
The definition of cotorsion pairs for extriangulated categories was first introduced in \cite{NP}, and that of hereditary cotorsion pairs in \cite{AT}. However, we would like to point out that cotorsion pairs in the sense of \cite{NP, AT} are complete cotorsion pairs in our sense. Moreover, the authors in \cite{AT} also define the notion of $s$-cotorsion pairs which, in $\K^{[-d+1,0]}(\proj\La)$, turns out to be equivalent to the notion of hereditary cotorsion pairs (Lemma \ref{s-implies-h}). 
\end{remark}

Following \cite{Sa}, for a cotorsion pair $(\X,\Y)$, we will call $\Y$ a \emph{cotorsion class}, and $\X$ a \emph{cotorsion-free class}. We will call a cotorsion class \emph{hereditary} (resp. \emph{complete}) if the associated cotorsion pair is hereditary (resp. complete). We will denote the poset of cotorsion classes in $\C$ under inclusion by $\cotors \C$. This is isomorphic to the poset of cotorsion pairs in $\C$ under the order $$(X_1,Y_1)\preceq(X_2,Y_2):\iff X_1\supseteq X_2\iff Y_1\subseteq Y_2.$$ Furthermore, we will denote the subposets of hereditary, complete, and complete and hereditary cotorsion classes/pairs by $\hcotors\C$, $\ccotors\C$, and $\chcotors\C$, respectively.
    
\begin{lemma}\label{s-implies-h}
    Let $(\X,\Y)$ be a cotorsion pair in $\K^{[-d+1,0]}(\proj\La)$. Then the following are equivalent.
        \begin{enumerate}
        \item $(\X,\Y)$ is a hereditary cotorsion pair.
        \item $\Eb^2(\X,\Y)=0$.
        \item $\X$ is closed under cocones. 
        \item $\Y$ is closed under cones. 
        \end{enumerate}
    \end{lemma}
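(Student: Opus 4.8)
The plan is to establish the cycle of implications $(1)\Rightarrow(2)\Rightarrow(3)\Rightarrow(4)\Rightarrow(1)$, using the ambient triangulated structure of $\K^b(\proj\La)$ together with the extension-formula of Lemma \ref{Extlem}. The implication $(1)\Rightarrow(2)$ is immediate from the definition of hereditary, since $\Eb^2=G^2$. For $(2)\Rightarrow(3)$: given a conflation $X'\to X\to C$ with $X,C\in\X$, I would apply $\Eb(-,Y)$ for $Y\in\Y$ to the associated triangle in $\K^b(\proj\La)$. The long exact sequence reads $\Eb(X,Y)\to\Eb(X',Y)\to\Eb^2(C,Y)$; the left term vanishes since $X\in\X$, and the right term vanishes by hypothesis (2), so $\Eb(X',Y)=0$ for all $Y\in\Y$, whence $X'\in\X$ because $(\X,\Y)$ is a cotorsion pair. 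Dually, $(1)\Rightarrow(4)$ (or $(2)\Rightarrow(4)$) follows by applying $\Eb(X,-)$ to a triangle $C\to Y\to Y'$ with $Y,Y'\in\Y$ and using $\Eb^2(X,Y)=0$.

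The slightly less formal implications are $(3)\Rightarrow(1)$ and $(4)\Rightarrow(1)$, where one must climb from vanishing of $\Eb$ to vanishing of all $\Eb^k$, $k\geq 2$. Here I would use the key structural fact that $\K^{[-d+1,0]}(\proj\La)$ is \emph{bounded} in cohomological amplitude $d$: for any $X,Y\in\K^{[-d+1,0]}(\proj\La)$ one has $\Eb^k(X,Y)=\Hom_{\K^b(\proj\La)}(X,\Sigma^kY)=0$ automatically once $k$ is large enough (concretely, once $k\geq d$, since then $\Sigma^kY$ is concentrated in degrees below $-d+1$ while $X$ sits in $[-d+1,0]$, and a homotopy class of maps between such complexes of projectives must vanish). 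So it suffices to control $\Eb^2,\dots,\Eb^{d-1}$. Assuming (3), i.e. $\X$ closed under cocones: given $X\in\X$, $Y\in\Y$, and $2\le k\le d-1$, I would resolve $X$ by a conflation $X_1\to P\to X$ with $P\in\add\La[d-1]\subseteq\X$ (such a conflation exists since $\add\La[d-1]$ generates; one takes the hard truncation / a projective cover in the appropriate degree), forcing $X_1\in\X$ by (3), and then shift-induct: $\Eb^k(X,Y)\cong\Eb^{k-1}(X_1,Y)$ from the long exact sequence (the $P$-terms vanish — or, if $\add\La[d-1]$ is not itself $\Eb^{\ge 2}$-rigid into $\Y$, one iterates enough times that the argument of the previous paragraph kills the tail). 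Iterating $k-1$ times brings us down to $\Eb^1$ of an object of $\X$ against $Y\in\Y$, which is $0$. The dual dimension-shift on the $\Y$-side, using $\add\La[d-1]$ or rather its dual cogenerator and a conflation $Y\to I\to Y_1$, gives $(4)\Rightarrow(1)$.

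The main obstacle I anticipate is the bookkeeping in the dimension-shift step: one needs a conflation realizing a "projective cover" of an arbitrary $X\in\X$ inside $\K^{[-d+1,0]}(\proj\La)$ whose kernel again lies in $\K^{[-d+1,0]}(\proj\La)$, so that closure under cocones can be invoked — Lemma \ref{split} and the hard-truncation constructions from the proofs of Propositions \ref{equiv} and \ref{liftconf} should supply exactly this, but care is needed to ensure the cocone stays in the right cohomological window at each stage. An alternative, cleaner route that sidesteps the iteration is to prove $(3)\Leftrightarrow(4)$ directly (via the equivalence $\tau_{\geq -d+2}$ and Proposition \ref{liftconf}, transporting closure conditions across), and then prove $(2)\Rightarrow(3)$ and note that $(3)$ together with the amplitude bound $\Eb^{\ge d}=0$ forces $\Eb^k(\X,\Y)=0$ for $2\le k\le d-1$ by a single application of the long exact sequence to each of the finitely many degrees. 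I would present whichever of these is shorter; I expect the direct $(3)\Leftrightarrow(4)$ argument to be the slickest.
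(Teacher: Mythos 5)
Your $(1)\Rightarrow(2)$ and the long-exact-sequence arguments for $(2)\Rightarrow(3)$ and $(2)\Rightarrow(4)$ are correct and are exactly the standard argument (the paper delegates these to \cite[Lemma~3.2]{AT}). Your strategy for $(3)\Rightarrow(1)$ — exploit the amplitude bound $\Eb^{\geq d}(\X,\Y)=0$ and dimension-shift downwards using a conflation with ``projective'' middle term and closure under cocones — is also the paper's strategy. However, the concrete conflation you propose is wrong. You assert $\add\La[d-1]\subseteq\X$ and want a conflation $X_1\to P\to X$ with $P\in\add\La[d-1]$. In fact $\add\La[d-1]$ sits on the \emph{other} side of every cotorsion pair: $\Eb(Z,\La[d-1])=\Hom(Z,\Sigma^{d}\La)=0$ for all $Z\in\K^{[-d+1,0]}(\proj\La)$ by degree reasons, so $\add\La[d-1]\subseteq\Y$ always, whereas $\Eb(\La[d-1],Y)\cong \Ho^{-d+2}(Y)$ is generally nonzero, so $\La[d-1]\notin\X$ in general. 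Moreover, a conflation $X_1\to P\to X$ with $P$ a stalk complex in degree $-d+1$ and all three terms inside the window $[-d+1,0]$ typically does not exist (the cocone of a map out of such a $P$ escapes the window). With $P\notin\X$, closure under cocones cannot be invoked to get $X_1\in\X$, so the step fails as written.

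The subcategory of $\X$ that actually plays the role of projectives is $\add\La$, i.e.\ stalk complexes in degree $0$: one has $\La\in\X$ for every cotorsion pair and $\Eb^{k}(\La,Y)\cong \Ho^{k}(Y)=0$ for all $k\geq 1$. The correct dimension-shift conflation is the hard-truncation one the paper uses, $\Sigma^{-1}t_{\leq -1}X\to t_{\geq 0}X\to X$, whose middle term $t_{\geq 0}X=X^{0}\in\add\La\subseteq\X$ and whose cocone $\Sigma^{-1}t_{\leq -1}X\cong t_{\leq 0}\Sigma^{-1}X$ stays in the window; iterating gives $t_{\leq 0}\Sigma^{-k}X\in\X$ and hence $\Eb^{k+1}(X,\Y)\cong\Eb(t_{\leq 0}\Sigma^{-k}X,\Y)=0$. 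Since you explicitly flagged hard truncations as the likely source of the conflation, the repair is local, but the argument you actually wrote down does not go through; the same correction is needed for your vaguer fallback (``a single application of the long exact sequence in each degree''), which still presupposes the correct conflation.
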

    \begin{proof}
        ($1\implies 2$) Clear. 
        \newline
        ($2\implies 3$, $2\implies 4$) The proof of \cite[Lemma~3.2]{AT} does not use the completeness axioms assumed by the authors.
        \newline
        ($3\implies 1$) Let $X\in\X$. Then there is a conflation $\Sigma^{-1}t_{\leq -1}X\to t_{\geq 0}X\to X$ with $t_{\geq 0}X,X\in\X$. Since $\X$ is closed under cocones, $t_{\leq 0}\Sigma^{-1}X\cong\Sigma^{-1}t_{\leq -1}X\in\X$. Repeating the above argument, we conclude that $t_{\leq 0}\Sigma^{-k}X\in\X$ for all $k\geq 1$.
        
        Using the triangle $t_{\geq 1}\Sigma^{-k}X\to\Sigma^{-k}X\to t_{\leq 0}\Sigma^{-k}X$, we get that $\Eb^{k+1}(X,\Y)\cong\Eb(\Sigma^{-k}X,\Y)=0$. 
        \newline
        ($4\implies 1$) Dual to the above proof.%Let $Y\in\Y$. Then there is a conflation $Y\to t_{\leq -d+1}Y\to \Sigma t_{\geq -d+2}Y$ with $t_{\leq -d+1}Y,Y\in\Y$. Since $\Y$ is closed under cones, $t_{\geq -d+1}\Sigma Y\cong\Sigma t_{\geq -d+2}Y\in\Y$. Using the triangle $t_{\geq -d+1}\Sigma Y\to\Sigma Y\to t_{\leq -d}\Sigma Y$, we get that $\Eb^2(\X,Y)\cong\Eb(\X,\Sigma Y)=0$. 
    \end{proof}
Similar to cotorsion pairs, the notion of torsion pairs was first introduced for abelian categories in \cite{D}. This was then generalised to triangulated categories in \cite{BR}, where they were proven to be equivalent to $t$-structures. A weaker definition of torsion pairs in triangulated categories was used in \cite{IJ}, which we generalise here to extriangulated categories equipped with a $\delta$-functor.
\begin{definition}\label{torclass}
    Let $\T,\F$ be full subcategories of $\C$. 
    \begin{enumerate}
        \item The pair $(\T,\F)$ is called a \emph{torsion pair} if 
        \begin{enumerate}
        \item $\Hom(t,\F)=0$ if and only if $t\in\T$;
        \item $\Hom(\T,f)=0$ if and only if $f\in\F$.
        \end{enumerate}
        \item A torsion pair $(\T,\F)$ is called \emph{positive} if $G^{k}(\T,\F)=0$ for all $k\leq -1$.
        \item\cite[Definition~3.1]{AET} The pair $(\T,\F)$ is called an $s$\emph{-torsion pair} if 
        \begin{enumerate}
        \item[(Sa)] $\Hom(\T,\F)=0$;
        \item[(Sb)] $G^{-1}(\T,\F)=0$;
        \item[(Sc)]\label{Sc} For all $C\in \C$, there exists a conflation $T\to C\to F$ with $T\in \T$ and $F\in\F$.
        \end{enumerate}
    \end{enumerate}
\end{definition}
\begin{remark}
    Using \cite[Proposition~3.2]{AET}, we know that every $s$-torsion pair is a torsion pair.
\end{remark}
For a torsion pair $(\T,\F)$, we will call $\T$ a \emph{torsion class}, and $\F$ a \emph{torsion-free class}. We will call a torsion class \emph{positive} (resp. an $s$-torsion class) if the associated torsion pair is positive (resp. an $s$-torsion pair). We will denote the poset of torsion classes in $\C$ under inclusion by $\tors \C$. This is isomorphic to the poset of torsion pairs in $\C$ under the order 
$$(\T_1,\F_1)\preceq (\T_2,\F_2)\iff \T_1\subseteq \T_2\iff \F_1\supseteq \F_2.$$ We will denote the subposets of positive, contravariantly finite, covariantly finite, and functorially finite torsion classes/pairs by $\ptors\C$, $\contrtors\C$, $\covtors\C$, $\ftors\C$, respectively. The subposet of $s$-torsion pairs will be denoted by $\stors\C$. Finally, we will denote the intersection of $\xtors\C$ and $\ytors\C$ by $\xytors\C$, where $\mbox{x, y}\in\{\mbox{p, contr, cov, f, s}\}$.

\begin{lemma}\label{p-implies-s}
    Let $(\T,\F)$ be a torsion pair in $\D^{[-d+2,0]}(\mmod\La)$. Then the following are equivalent.
    \begin{enumerate}
        \item\label{p} $(\T,\F)$ is a positive torsion pair.
        \item\label{s} $\Eb^{-1}(\T,\F)=0$.
        \item For all morphisms $f\in\T$, $\tau_{\geq -d+2}C(f)\in\T$.
        \item For all morphisms $g\in \F$, $\tau_{\leq 0}\Sigma^{-1}C(g)\in\F$.
    \end{enumerate}
\end{lemma}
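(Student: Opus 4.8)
The plan is to prove the cycle of implications $(\ref{p})\Rightarrow(\ref{s})\Rightarrow(3)\Rightarrow(\ref{p})$, and then $(\ref{s})\Leftrightarrow(4)$ by a dual argument, using the extriangulated structure on $\D^{[-d+2,0]}(\mmod\La)$ together with Lemma \ref{techlem} and the description of conflations coming from cones. The implication $(\ref{p})\Rightarrow(\ref{s})$ is immediate, since positivity asks $G^k(\T,\F)=0$ for \emph{all} $k\leq -1$, and $\Eb^{-1}=G^{-1}$ is one of those; so the real content is $(\ref{s})\Rightarrow(\ref{p})$, i.e. vanishing of the single negative extension $\Eb^{-1}$ forces vanishing of all $\Eb^{-k}$ for $k\geq 1$. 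The key observation is that for $X,Y\in\D^{[-d+2,0]}(\mmod\La)$ one has $\Eb^{-k}(X,Y)=\Hom_{\D^b(\mmod\La)}(X,\Sigma^{-k}Y)$, and by examining the cohomological supports (as in the proof of Lemma \ref{Extlem}) one can relate $\Hom(X,\Sigma^{-k}Y)$ to $\Hom(\Sigma X,\Sigma^{-k+1}Y)$ whenever the truncation tails fall outside the relevant window; more cleanly, one uses the triangle $\Sigma^{-1}t_{\leq -1}(\cdot)\to t_{\geq 0}(\cdot)\to(\cdot)$ inside $\D^{[-d+2,0]}$ to shift, exactly as in the proof of Lemma \ref{s-implies-h}, $(3\Rightarrow 1)$. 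So the argument should be a mirror of that lemma: given $\Eb^{-1}(\T,\F)=0$, one shows by induction that each $\Sigma^{k}$-shift of a torsion object (suitably truncated to land back in the subcategory) still pairs trivially with $\F$ under $\Eb^{-1}$, and peeling these off computes $\Eb^{-k}(\T,\F)$.

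For $(\ref{s})\Leftrightarrow(3)$, the bridge is the conflation-lifting and cone calculus already set up. Given $f\colon A\to B$ a morphism in $\T$ (both $A,B\in\T$), I would use Lemma \ref{inflation} to replace $f$ up to $\tau_{\geq -d+2}$-isomorphism by an inflation $f'\colon A\to B'$ with $B'=B\oplus(\text{something in }\add\La[d-1])$, whence $B'\in\T$ as well (torsion classes are closed under direct summands and contain, or rather are unaffected by, the shifted projectives once we are in the quotient picture — more precisely $\tau_{\geq -d+2}B'\cong\tau_{\geq -d+2}B\in\T$), and then $\tau_{\geq -d+2}C(f)\cong\tau_{\geq -d+2}C(f')$ by Lemma \ref{techlem}, with $C(f')$ fitting into a genuine conflation $A\to B'\to C(f')$. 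Applying $\tau_{\geq -d+2}$ gives a conflation $A\to\tau_{\geq -d+2}B'\to\tau_{\geq -d+2}C(f')$ in $\D^{[-d+2,0]}(\mmod\La)$; its long exact sequence in $\Eb^{\bullet}(-,F)$ for $F\in\F$ reads $\cdots\to\Eb^{-1}(A,F)\to\Eb^0(\tau_{\geq -d+2}C(f'),F)\to\Eb^0(\tau_{\geq -d+2}B',F)\to\cdots$. Since $A,\tau_{\geq -d+2}B'\in\T$, the outer $\Hom$-terms vanish and, under hypothesis (\ref{s}), so does $\Eb^{-1}(A,F)$; chasing the sequence gives $\Hom(\tau_{\geq -d+2}C(f'),\F)=0$, i.e. $\tau_{\geq -d+2}C(f)\in\T$. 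Conversely, assuming (3), one recovers $\Eb^{-1}(\T,\F)=0$ by noting that any class in $\Eb^{-1}(T,F)=\Hom(T,\Sigma^{-1}F)$ can be realized (after truncating) as arising from the connecting map of the conflation associated to a suitable morphism in $\T$ whose cone-truncation is therefore again in $\T$, forcing the class to be zero. The statement $(\ref{s})\Leftrightarrow(4)$ is the Verdier/Serre dual of $(\ref{s})\Leftrightarrow(3)$: one uses $\tau_{\leq 0}$ and Corollary \ref{dual} in place of $\tau_{\geq -d+2}$ and Proposition \ref{liftconf}, and the cocone $\tau_{\leq 0}\Sigma^{-1}C(f)$ in place of the cone; note the natural isomorphism $\Eb^{-1}(T,F)\cong\Eb^{-1}$ read from the other side, i.e. $\Hom(T,\Sigma^{-1}F)\cong\Hom(\Sigma T,F)$, makes the two conditions symmetric.

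The main obstacle I anticipate is the careful bookkeeping in $(\ref{s})\Rightarrow(\ref{p})$: one must make sure that the truncation functors used to ``shift and land back'' in $\D^{[-d+2,0]}(\mmod\La)$ do not alter the $\Hom$-groups being computed, which requires the support comparisons of the kind used in Lemma \ref{Extlem} (the tails $\tau_{\leq -1}$ or $\tau_{\geq -d+2}$ of the shifted objects must be $\Hom$-orthogonal to the target for both $\Eb^0$ and $\Eb^{-1}$). A secondary subtlety is checking, in $(\ref{s})\Leftrightarrow(3)$, that the replacement of $f$ by an inflation genuinely does not change membership of the cone-truncation in $\T$ — this is exactly what Lemma \ref{techlem} buys us, but one should state explicitly that adding a summand in $\add\La[d-1]$ to $B$ is harmless after applying $\tau_{\geq -d+2}$. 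Everything else is a routine long-exact-sequence chase in the extriangulated category, using that $\T$ (resp. $\F$) is precisely the left (resp. right) $\Hom$-orthogonal of $\F$ (resp. $\T$).
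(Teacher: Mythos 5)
Your overall strategy is the same as the paper's: $(1)\Rightarrow(2)$ is trivial; $(2)\Rightarrow(3)$ comes from a long exact sequence attached to the cone of $f$ together with support comparisons that let one replace $C(f)$ by $\tau_{\geq -d+2}C(f)$; $(3)\Rightarrow(1)$ is the iterated shift-and-truncate induction you sketch in your first paragraph (the paper runs it by feeding the zero morphisms $T\to 0$ into condition (3), so that $\tau_{\geq -d+2}\Sigma^k T\in\T$ for all $k\geq 0$, and then kills the tails $\tau_{\leq -d+1}\Sigma^k T$ by comparing cohomological supports); and (4) is dual. Your vaguely stated converse $(3)\Rightarrow(2)$ via ``connecting maps'' is not needed once you have $(3)\Rightarrow(1)$, and the clean way to run the latter is exactly the zero-morphism trick above.

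There is, however, one step in your $(2)\Rightarrow(3)$ that is false as written: applying $\tau_{\geq -d+2}$ to a conflation of $\K^{[-d+1,0]}(\proj\La)$ does \emph{not} in general produce a conflation of $\D^{[-d+2,0]}(\mmod\La)$, so you may not invoke the long exact sequence of the $\delta$-functor for the truncated sequence. For $d=2$ the conflation $\La\to 0\to \La[1]$ in $\K^{[-1,0]}(\proj\La)$ truncates to $\La\to 0\to 0$, which is not a short exact sequence in $\mmod\La$; in general the truncation fails to be a conflation precisely when the cohomology of the cone leaks into degree $-d+1$, which is exactly the case the truncation is meant to handle. The repair is to drop the detour through $\K^{[-d+1,0]}(\proj\La)$ (Lemmas \ref{inflation} and \ref{techlem} are not needed here) and argue directly in $\D^b(\mmod\La)$, as the paper does: applying $\Hom(-,F)$ to the triangle $X\xrightarrow{f}Y\to C(f)$ gives the exact sequence $\Eb^{-1}(X,F)\to\Hom(C(f),F)\to\Hom(Y,F)$ with vanishing outer terms, hence $\Hom(C(f),F)=0$; then the truncation triangle $\tau_{\leq -d+1}C(f)\to C(f)\to\tau_{\geq -d+2}C(f)$ together with $\Hom(\Sigma\tau_{\leq -d+1}C(f),F)=0$ (support) yields $\Hom(\tau_{\geq -d+2}C(f),F)=0$. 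These are precisely the support comparisons you flag as the anticipated obstacle, so the gap is one of execution rather than of idea, but as stated the step would fail.
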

\begin{proof}
    $(1\implies 2)$ Clear. 
    \newline
    $(2\implies 3)$ Let $f:X\to Y\in\T$. For $F\in\F$, the triangle $X\to Y\to C(f)$ induces the exact sequence $$\Eb^{-1}(X,F)\to \Hom(C(f),F)\to \Hom(Y,F).$$ By assumption, $\Eb^{-1}(X,F)$ and $\Hom(Y,F)$ vanish, which gives that $\Hom(C(f),F)=0$. Using the triangle $\tau_{\leq -d+1}C(f)\to C(f)\to \tau_{\geq -d+2}C(f)$, we get the exact sequence $$\Hom(\tau_{\leq -d+1}C(f)[1],F)\to \Hom(\tau_{\geq -d+2}C(f),F)\to \Hom(C(f),F)=0.$$ Comparing the support of the cohomology of $\tau_{\leq -d+1}C(f)[1]$ and $F$, we get that $\Hom(\tau_{\leq -d+1}C(f)[1],F)=0$ which gives that $\Hom(\tau_{\geq -d+2}C(f),F)=0$. Thus, $\tau_{\geq -d+2}C(f)\in \prescript{\perp}{}{\F}=\T$.
    \newline
    $(2\implies 4)$ Dual to the above proof.
    \newline
    $(3\implies 1)$ Let $T\in\T$. Using the triangle $T\to 0\to \Sigma T$, we get that $\tau_{\geq -d+2}\Sigma T\in \T$. Thus, $\tau_{\geq -d+2}\Sigma (\tau_{\geq -d+2}\Sigma T)\in \T$. But this is the same as $\tau_{\geq -d+2}\Sigma^2 T$. Continuing in this way, we conclude that $\tau_{\geq -d+2}\Sigma^k T\in \T$ for all $k\geq 0$.

    Using the triangle $\tau_{\leq -d+1}\Sigma^{k}T\to\Sigma^{k}T\to \tau_{\geq -d+2}\Sigma^{k}T$, for all $F\in\F$, we get the exact sequence $$0=\Hom(\tau_{\geq -d+2}\Sigma^k T, F)\to \Hom(\Sigma^kT,F)\to \Hom(\tau_{\leq -d+1}\Sigma^{k}T,F).$$ Comparing the support of the cohomology of $\tau_{\leq -d+1}\Sigma^{k}T$ and $F$, we get that $\Hom(\tau_{\leq -d+1}\Sigma^{k}T,F)=0$. Thus $\Eb^{-k}(T,F)\cong \Hom(\Sigma^{k}T,F)=0$ for all $k\geq 0$, and $(\T,\F)$ is a positive torsion pair.
    \newline
    $(4\implies 1)$ Dual to the above proof.
\end{proof}
\begin{corollary}\label{pconts}
    The class of $s$-torsion classes in $\D^{[-d+2,0]}(\mmod\La)$ is contained in the class of positive torsion classes in $\D^{[-d+2,0]}(\mmod\La)$.
\end{corollary}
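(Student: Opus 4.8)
The plan is to deduce the statement directly from Lemma~\ref{p-implies-s}, by observing that the defining axiom (Sb) of an $s$-torsion pair is, in the conventions fixed at the start of \S\ref{sec:cotors}, exactly condition~\ref{s} of that lemma.

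First I would fix an $s$-torsion class $\T$ in $\D^{[-d+2,0]}(\mmod\La)$, say with associated $s$-torsion pair $(\T,\F)$ in the sense of Definition~\ref{torclass}(3). By the remark following that definition, which records \cite[Proposition~3.2]{AET}, the pair $(\T,\F)$ is already a torsion pair in the sense of Definition~\ref{torclass}(1); in particular $\F=\T^{\perp}$ and $\T={}^{\perp}\F$, so that no auxiliary torsion-free class needs to be produced. Next I would unwind notation: here $G^{-1}(-,-)=\Eb^{-1}(-,-)=\Hom_{\D^{b}(\mmod\La)}(-,\Sigma^{-1}-)$, so axiom (Sb), namely $G^{-1}(\T,\F)=0$, is precisely the statement $\Eb^{-1}(\T,\F)=0$, that is, condition~\ref{s} of Lemma~\ref{p-implies-s}.

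Finally I would apply the implication \ref{s}$\Rightarrow$\ref{p} of Lemma~\ref{p-implies-s} to conclude that $(\T,\F)$ is a positive torsion pair, which by definition means $\T$ is a positive torsion class. As $\T$ was arbitrary, this gives the inclusion $\stors\D^{[-d+2,0]}(\mmod\La)\subseteq\ptors\D^{[-d+2,0]}(\mmod\La)$. I do not expect any genuine obstacle here: all of the content is carried by Lemma~\ref{p-implies-s}, and the corollary amounts to checking that the $s$-torsion axioms supply exactly the hypothesis that lemma needs; the only point requiring a moment's care is the appeal to \cite[Proposition~3.2]{AET} to ensure that an $s$-torsion pair really is a torsion pair with the same two classes, so that $\F$ can be reused verbatim.
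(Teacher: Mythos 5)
Your proposal is correct and is essentially the paper's own proof: the paper also deduces the corollary from the implication (2)$\Rightarrow$(1) of Lemma \ref{p-implies-s}, with the identification of axiom (Sb) with condition (2) and the appeal to \cite[Proposition~3.2]{AET} left implicit. Your write-up simply makes those two bookkeeping steps explicit.
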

\begin{proof}
    This follows from (\ref{s}) $\implies$ (\ref{p}) in Lemma \ref{p-implies-s}.
\end{proof}

The next theorem provides us with a way to link cotorsion pairs to torsion pairs using the truncation functor defined in the previous section. We will need the following generalization of Wakamatsu Lemma to extriangulated categories.

\begin{lemma}\label{WakLem}\cite[Lemma~3.1]{LZ}
    Let $\mathcal A$ be an extension closed subcategory of an extriangulated category $\C$ and $ t \xrightarrow{f} a\to b$ be a conflation such that $f$ is the minimal left $\mathcal A$-approximation of $t\in \C$. Then $b \in \prescript{\perp_1}{}{\mathcal A}$.
\end{lemma}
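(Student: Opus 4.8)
The plan is to adapt the classical proof of the Wakamatsu Lemma, replacing short exact sequences by conflations and invoking the long exact sequences attached to a conflation in an extriangulated category (see \cite{NP}). Fix $A\in\mathcal{A}$ together with an $\Eb$-extension $\delta\in\Eb(b,A)$, realized by a conflation $A\to e\xrightarrow{p}b$; the goal is to prove $\delta=0$.

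First I would apply $\Hom(-,A)$ to the conflation $t\xrightarrow{f}a\xrightarrow{g}b$, obtaining the exact sequence
$$\Hom(a,A)\xrightarrow{f^{*}}\Hom(t,A)\xrightarrow{\partial}\Eb(b,A)\xrightarrow{g^{*}}\Eb(a,A),$$
where $\partial$ is the connecting homomorphism and $g^{*}=\Eb(g,A)$. Since $a\in\mathcal{A}$ and $f$ is a left $\mathcal{A}$-approximation, $f^{*}=\Hom(f,A)$ is surjective, hence $\partial=0$, hence $g^{*}$ is injective; so it suffices to show $g^{*}\delta=0$.

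The extension $g^{*}\delta=\Eb(g,A)(\delta)$ is realized by the pullback conflation $A\xrightarrow{x'}e'\xrightarrow{y'}a$ of $A\to e\xrightarrow{p}b$ along $g$, which comes with a morphism of conflations lying over $g$. Since $A,a\in\mathcal{A}$ and $\mathcal{A}$ is extension closed, $e'\in\mathcal{A}$. Pulling back $A\to e'\xrightarrow{y'}a$ once more along $f$ realizes $(g\circ f)^{*}\delta$; as $g\circ f=0$ this extension vanishes, so that second pullback conflation splits, and transporting the splitting back through the morphism of conflations produces a morphism $s:t\to e'$ with $y'\circ s=f$. Now the hypotheses on $f$ come into play: because $e'\in\mathcal{A}$ and $f$ is a left $\mathcal{A}$-approximation, $s$ factors as $s=h\circ f$ for some $h:a\to e'$, and then $(y'\circ h)\circ f=y'\circ s=f$, so left minimality of $f$ forces $y'\circ h$ to be an automorphism of $a$. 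Hence $h\circ(y'\circ h)^{-1}$ is a section of $y'$, so the deflation $y'$ is a split epimorphism, so the conflation $A\to e'\xrightarrow{y'}a$ splits; that is, $g^{*}\delta=0$. With the injectivity of $g^{*}$ from the previous step this gives $\delta=0$, and since $A\in\mathcal{A}$ was arbitrary, $b\in\prescript{\perp_1}{}{\mathcal{A}}$.

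I expect the main difficulty to be organizational rather than conceptual: one has to be careful with the extriangulated bookkeeping---existence of the pullback conflation along a deflation together with its accompanying morphism of conflations realizing $\Eb(g,A)(\delta)$, exactness of the displayed sequence, and the fact (from \cite{NP}) that a conflation whose deflation is a split epimorphism is itself split. The genuine content, and the step to get exactly right, is the interplay between the approximation property (surjectivity of $f^{*}$ and the factorization $s=h\circ f$) and left minimality (upgrading $y'\circ h$ to an automorphism), exactly as in the classical argument.
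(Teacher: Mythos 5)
Your proof is correct: the surjectivity of $\Hom(f,A)$ gives injectivity of $\Eb(g,A)$ via the long exact sequence of \cite{NP}, and the pullback--factorization--minimality argument kills $g^{*}\delta$, so $\delta=0$. The paper itself does not prove this lemma but only cites \cite[Lemma~3.1]{LZ}; your argument is the standard Wakamatsu argument transported to the extriangulated setting, which is exactly the content of the cited result.
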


The following theorems \ref{tors-cotors} and \ref{bijections} generalise the existing bijection between torsion classes in $\mmod\La$ and cotorsion classes in $\K^{[-1,0]}(\proj\La)$ (\cite[Proposition~3.2]{PZ}, \cite{G2}). 

\begin{theorem}\label{tors-cotors}
    Let $\Y$ be a cotorsion class in $\K^{[-d+1,0]}(\proj\La)$. Then $\tau_{\geq -d+2}\Y$ is a torsion class in $\D^{[-d+2,0]}(\mmod\La)$. Moreover, if $\Y$ is hereditary (resp. complete, resp. hereditary and contravariantly finite), then $\tau_{\geq -d+2}\Y$ is positive (resp. covariantly finite, resp. an $s$-torsion class).
\end{theorem}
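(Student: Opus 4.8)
The overall strategy is to transfer each property of the cotorsion class $\Y$ through the equivalence $\tau_{\geq -d+2}\colon \K^{[-d+1,0]}(\proj\La)/\add\La[d-1]\to\D^{[-d+2,0]}(\mmod\La)$ of Proposition \ref{equiv}, using the explicit description of the $\Eb$-groups from Lemma \ref{Extlem}, the lifting of conflations from Proposition \ref{liftconf} and Corollary \ref{dual}, and the approximation-transfer Lemma \ref{approximations}. Throughout write $\T:=\tau_{\geq -d+2}\Y$ and $\T^{\perp}:=\{G\in\D^{[-d+2,0]}(\mmod\La)\mid\Hom(\T,G)=0\}$.

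\emph{Step 1: $\T$ is a torsion class.} First I would show $\T$ is closed under the quotient operation: given a morphism $f\colon T_1\to T_2$ with $T_i\in\T$, I would use Lemma \ref{inflation} to replace $f$ by an inflation $f'\colon A\to B$ in $\K^{[-d+1,0]}(\proj\La)$ with $\tau_{\geq -d+2}f'\cong\tau_{\geq -d+2}f$. Since $\Y$ is a cotorsion class, it is closed under extensions, hence under the second term of a conflation whose first two terms lie in $\Y$ — but here I should be careful: I want $\T$ closed under cones of maps between its objects, which corresponds to $\Y$ being closed under the appropriate extension construction. Actually the cleanest route is to verify the defining bijective-orthogonality conditions of Definition \ref{torclass} directly. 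Because $\tau_{\geq -d+2}$ is full, essentially surjective, and identifies $\Hom$-groups modulo maps factoring through $\add\La[d-1]$, and because any $H\in\K^{[-d+1,0]}(\proj\La)$ satisfies $\Hom(H,\La[d-1])$-considerations are degenerate (objects of $\add\La[d-1]$ map to $0$), one checks that $\Hom_{\D}(\T,\T^{\perp})=0$ and, conversely, that any object with $\Hom_{\D}(\T,-)=0$ lies in $\T^{\perp}$, and symmetrically that $\prescript{\perp}{}{(\T^{\perp})}=\T$. The containment $\add\La[d-1]\subseteq\Y$ (which holds since $\Eb(\add\La[d-1],-)=0$, forcing $\add\La[d-1]\subseteq\X$ for the partner $\X$, and also one checks directly $\La[d-1]\in\Y$) is what makes Proposition \ref{equiv} and Lemma \ref{approximations} applicable.

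\emph{Step 2: hereditary $\Rightarrow$ positive.} By Lemma \ref{s-implies-h}, $\Y$ hereditary means $\Y$ is closed under cones in $\K^{[-d+1,0]}(\proj\La)$. I would take a morphism $f\colon X\to Y$ in $\Y$ (after lifting, using fullness of $\tau_{\geq -d+2}$, any morphism in $\T$ to one in $\Y$), form $C(f)\in\Y$, and apply Lemma \ref{techlem} to get $\tau_{\geq -d+2}C(\tau_{\geq -d+2}f)\cong\tau_{\geq -d+2}C(f)\in\T$. By the criterion $(3)\Leftrightarrow(1)$ of Lemma \ref{p-implies-s}, this says exactly that $\T$ is positive. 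The only subtlety is that a morphism $f'$ in $\T$ lifts to a morphism $f$ in $\Y$ rather than in all of $\K^{[-d+1,0]}(\proj\La)$; fullness of $\tau_{\geq -d+2}$ combined with $\Y$ being a full (additive) subcategory handles this.

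\emph{Step 3: complete $\Rightarrow$ covariantly finite, and Step 4: hereditary + contravariantly finite $\Rightarrow$ $s$-torsion.} For Step 3, given $M\in\D^{[-d+2,0]}(\mmod\La)$, pick a preimage $Z$ with $\tau_{\geq -d+2}Z\cong M$; completeness condition (Ca) gives a conflation $Z\to Y\to X$ in $\K^{[-d+1,0]}(\proj\La)$ with $Y\in\Y$, $X\in\X$; applying $\tau_{\geq -d+2}$ (which is exact on conflations by Proposition \ref{liftconf}, more precisely sends conflations to conflations) yields $M\to\tau_{\geq -d+2}Y\to\tau_{\geq -d+2}X$, and one argues the map $M\to\tau_{\geq -d+2}Y$ is a left $\T$-approximation using that $\Hom_\D(\tau_{\geq -d+2}X,\T)$ is controlled by $\Eb$-orthogonality via Lemma \ref{Extlem} together with $\Eb(\X,\Y)=0$. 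Hence $\T$ is covariantly finite; by Lemma \ref{p-implies-s} it is automatically... no — covariant finiteness is a separate property, so this just gives $\T\in\covtors$. For Step 4, combining Step 2 (positivity from hereditary) with the covariant-finiteness-type argument, plus Lemma \ref{approximations} to get that $\T=\tau_{\geq -d+2}\Y$ is contravariantly finite when $\Y$ is, I would produce for each $C$ a conflation $T\to C\to F$ with $T\in\T$: take a minimal right $\T$-approximation (exists by contravariant finiteness), complete it to a conflation, and invoke the dual Wakamatsu Lemma \ref{WakLem} together with positivity to show the cocone lies in $\F=\T^{\perp}$; the negative-extension vanishing (Sb) is precisely positivity.

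\emph{Main obstacle.} The technically delicate point is Step 3/Step 4: translating the completeness/finiteness conditions through $\tau_{\geq -d+2}$ requires knowing that $\tau_{\geq -d+2}$ not only preserves conflations (Proposition \ref{liftconf} gives the lifting direction; I also need the forward direction, that the image of a conflation is a conflation, which follows since $\add\La[d-1]$-maps are killed and cones are preserved up to the quotient) but also that $\Eb$-vanishing is detected correctly — here Lemma \ref{Extlem} is essential, and one must check that the $\tau_{\leq -1}\nu(-)$ twist appearing there does not obstruct the orthogonality bookkeeping, i.e. that $\Eb(\X,\Y)=0$ in $\K^{[-d+1,0]}(\proj\La)$ really does force $\Hom_\D(\tau_{\geq -d+2}\X,\T)=0$ after identifying $\tau_{\geq -d+2}\X$ with the correct object. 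Getting this dictionary exactly right, and checking minimality of approximations survives (needed for Wakamatsu), is where the real work lies; the rest is formal transport along the equivalence.
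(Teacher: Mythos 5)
Your overall architecture matches the paper's: transfer properties through the equivalence of Proposition \ref{equiv}, use Lemma \ref{techlem} and Lemma \ref{p-implies-s} for positivity, Lemma \ref{approximations} for finiteness, and Wakamatsu for the $s$-torsion case. Step 2 is essentially the paper's argument. However, two steps contain genuine gaps.

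In Step 1, the claim that ``symmetrically $\prescript{\perp}{}{(\T^{\perp})}=\T$'' follows from $\tau_{\geq -d+2}$ being full and essentially surjective is not justified and is where the entire content of the statement lies. The first two conditions you list are tautologies about $\T^{\perp}$; the nontrivial inclusion is $\prescript{\perp}{}{(\T^{\perp})}\subseteq\T$, and no equivalence of additive categories gives double-orthogonality closure for free. What is actually needed is to translate the hypothesis $\Y=(\prescript{\perp_1}{}{\Y})^{\perp_1}$ (an $\Eb$-orthogonality in $\K^{[-d+1,0]}(\proj\La)$) into a $\Hom$-orthogonality statement in $\D^{[-d+2,0]}(\mmod\La)$, and this is exactly what Lemma \ref{Extlem} (Serre duality via the Nakayama functor) provides: $\Eb(X,Y)\cong \mathrm{D}\Hom(\tau_{\geq -d+2}Y,\Sigma^{-1}\tau_{\leq -1}\nu X)$, so that $\prescript{\perp_1}{}{\Y}=\{X\mid \Sigma^{-1}\tau_{\leq -1}\nu X\in\T^{\perp}\}$ and membership of $Y$ in $\Y$ is detected by $\Hom(\tau_{\geq -d+2}Y,\Sigma^{-1}\tau_{\leq -1}\nu\prescript{\perp_1}{}{\Y})=0$. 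You acknowledge in your ``main obstacle'' paragraph that this dictionary is essential, but you deploy it only for Steps 3--4, not for Step 1 where it is indispensable.

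In Step 4, ``complete \lbrack the minimal right $\T$-approximation\rbrack\ to a conflation'' conceals the main difficulty: the cone $C(f)$ of a right approximation $f\colon Y'\to Z$ taken in $\D^b(\mmod\La)$ need not lie in $\D^{[-d+2,0]}(\mmod\La)$, so the triangle is not a priori a conflation in the extriangulated category, and Wakamatsu cannot yet be applied. The paper must first truncate the cone, run an octahedral-axiom argument to show the cocone of $\pi\circ g$ lies in $\prescript{\perp}{}{(\T^{\perp})}=\T$, and only then use minimality of $f$ to conclude that $C(f)$ itself lies in $\D^{[-d+2,0]}(\mmod\La)$; Wakamatsu is invoked afterwards. (A smaller issue: your direct argument in Step 3 that $M\to\tau_{\geq -d+2}Y$ is a left $\T$-approximation would require $\Eb_{\D}(\tau_{\geq -d+2}X,\T)=0$, which does not follow from $\Eb(\X,\Y)=0$ in $\K^{[-d+1,0]}(\proj\La)$; the paper's route --- completeness gives covariant finiteness upstairs, then Lemma \ref{approximations} transfers it --- avoids this.)
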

\begin{proof}
    Set $\Y':=\tau_{\geq -d+2}\Y$ and $\X'=(\tau_{\geq -d+2}\Y)^{\perp}$. We need to show that $\prescript{\perp}{}{\X'}=\Y'$. By definition, $\Y'\subseteq\prescript{\perp}{}{\X'}$. Let $Y'\in \prescript{\perp}{}{\X'}$. Then there exists $Y\in\K^{[-d+1,0]}(\proj\La)$ such that $\tau_{\geq -d+2}(Y)\cong Y'$. We want to show that $Y\in\Y$.

    Note that, since $(\prescript{\perp_1}{}{\Y},\Y)$ is a cotorsion pair, 
    
    \begin{equation*}
    \begin{split}
        \prescript{\perp_1}{}{\Y}&=\{X\in \K^{[-d+1,0]}(\proj\La)\mid \Eb(X,\Y)=0\}\\
            &=\{X\in \K^{[-d+1,0]}(\proj\La)\mid \Hom(\tau_{\geq-d+2}\Y, \Sigma^{-1}\tau_{\leq -1}\nu X)=0\}\ \ \ \ \ \mbox{(by Lemma \ref{Extlem})}\\
            &=\{X\in \K^{[-d+1,0]}(\proj\La)\mid \Sigma^{-1}\tau_{\leq -1}\nu X\in\X'\},
    \end{split}
    \end{equation*}
    and 
    \begin{equation*}
    \begin{split}
        \Y&=\{Z\in \K^{[-d+1,0]}(\proj\La)\mid \Eb(\prescript{\perp_1}{}{\Y},Z)=0\}\\
            &=\{Z\in \K^{[-d+1,0]}(\proj\La)\mid \Hom(\tau_{\geq-d+2}Z, \Sigma^{-1}\tau_{\leq -1}\nu \prescript{\perp_1}{}{\Y})=0\}\ \ \ \ \mbox{(by Lemma \ref{Extlem})}.
    \end{split}
    \end{equation*}
    Since, for all $X\in\prescript{\perp_1}{}{\Y}$, $\Sigma^{-1}\tau_{\leq -1}\nu X\in\X'$, and $Y'\in \prescript{\perp}{}{\X'}$, we get that $Y\in \Y$. Thus, $(\Y',\X')$ is a torsion pair in $\D^{[-d+2,0]}(\mmod\La)$.

    Now, suppose that $\Y$ is hereditary, which is equivalent to $\Y$ being stable under cones (Lemma \ref{s-implies-h}). Let $g:Z\to Z'\in \Y'$. Then there exists $f:Y\to Y'\in\Y$ such that $\tau_{\geq -d+2}f\cong g$. Using Lemma \ref{inflation}, we can assume that $C(f)\in \K^{[-d+1]}(\proj\La)$. Since $\Y$ is closed under cones, $C(f)\in \Y$. Using Lemma \ref{techlem}, $\tau_{\geq -d+2}C(g)\cong \tau_{\geq-d+2}C(f)\in \Y'$. Using Lemma \ref{p-implies-s}, we conclude that $\Y'$ is a positive torsion class. 

    Next, if $\Y$ is complete, then it is covariantly finite in $\K^{[-d+1,0]}(\proj\La)$. This implies that $\Y'$ is covariantly finite in $\D^{[-d+2,0]}(\mmod\La)$ (Lemma \ref{approximations}).

    Finally, suppose $\Y$ is hereditary and contravariantly finite. Then $\Y'$ is positive and contravariantly finite in $\D^{[-d+2,0]}(\mmod\La)$. Let $Z\in \D^{[-d+2,0]}(\mmod\La)$ and $f:Y'\to Z$ a minimal right $\Y'$-approximation of $Z$. Consider the triangles $Y'\xrightarrow{f}Z\xrightarrow{g}C(f)$ and $\tau_{\leq-d+1}C(f)\to C(f)\xrightarrow{\pi}\tau_{\geq -d+2}C(f)$ in $\D^b(\mmod\La)$. Then $\Ho^0(g)$ is an epimorphism and $\Ho^0(\pi)$ is an isomorphism, which implies that $\Ho^{0}(\pi \circ g)$ is an epimorphism. Thus, the cocone of $\pi \circ g$, say $Y$, is in $\D^{[-d+2,0]}(\mmod\La)$, and we get a conflation $Y\xrightarrow{f'} Z\xrightarrow{\pi\circ g} \tau_{\geq -d+2}C(f)$ in $\D^{[-d+2,0]}(\mmod\La)$. We will show that $Y\in\Y'$. 
        
    Using the octahedral axiom as shown below, 
    \begin{center}
    \begin{tikzcd}
    Z \arrow[r, "g"] \arrow[d, Rightarrow, no head] & C(f) \arrow[r] \arrow[d, "\pi"]                            & \Sigma Y' \arrow[d, "\Sigma h"]  \\Z \arrow[r, "\pi\circ g"] & \tau_{\geq -d+2}C(f) \arrow[d] \arrow[r]                   & \Sigma Y \arrow[d]\\& \Sigma \tau_{\leq -d+1}C(f) \arrow[r, Rightarrow, no head] & \Sigma \tau_{\leq -d+1}C(f)
    \end{tikzcd}
    %\begin{tikzcd}
    %&  & \Sigma Y \arrow[lldd, "{f'[1]}"] \arrow[rrd, dashed] &  &\\
    %\Sigma Y' \arrow[d, "{[1]}"'] \arrow[rru, dashed, "\Sigma h"] &  &       
    %&  & \Sigma \tau_{\leq -d+1}C(f) \arrow[lldd, "{[1]}"] \arrow[llll, "{[1]}"] \\
    %Z \arrow[rrd, "g"'] \arrow[rrrr, "\pi\circ g"']&  &    &  & \tau_{\geq -d+2}C(f) \arrow[u] \arrow[lluu]\\
    %&  & C(f) \arrow[lluu] \arrow[rru, "\pi"']           &  & 
    %\end{tikzcd}
    \end{center}
    we get a triangle $Y'\xrightarrow{h}Y\to \tau_{\leq -d+1}C(f)$. Let $F\in \X'$. Then we have an exact sequence $\Hom(\tau_{\leq -d+1}C(f), F)\to \Hom(Y,F)\to \Hom(Y',F)=0$. Since $F\in\D^{[-d+2,0]}(\mmod\La)$, $\Hom(\tau_{\leq -d+1}C(f), F)=0$. Thus, $\Hom(Y,F)=0$ and $Y\in \prescript{\perp}{}{\X'}=\Y'$.
        
    Since $Y\in \Y'$ and $f$ is a minimal right $\Y'$-approximation of $Z$, there exists a map $h':Y\to Y'$ such that $fh'=f'$, which implies that $fh'h=f'h=f$. Since $f$ is minimal, we get that $f':Y\to Z\cong [0,f]:Y''\oplus Y'\to Z$ for some $Y''\in \D^b(\mmod\La)$. This implies that $C(f')\cong \tau_{\geq -d+2}C(f)\cong C(f)\oplus \Sigma Y''$. This gives that $C(f)\in \D^{[-d+2,0]}(\mmod\La)$. Using Lemma \ref{WakLem} for the conflation $Y'\xrightarrow{f}Z\xrightarrow{g}C(f)$ in the extriangulated category $\D^{[-d+2,0]}(\mmod\La)$, we get that $C(f)\in \X'$. Hence, $(\Y',\X')$ is an $s$-torsion pair in $\D^{[-d+2,0]}(\mmod\La)$.
\end{proof}

The above theorem gives us a poset homomorphism $\Phi: \cotors \K^{[-d+1,0]}(\proj\La)\to \tors\D^{[-d+2,0]}(\mmod\La)$.

\subsection{The bijections}
We now show that the above map $\Phi$ is a poset isomorphism.

\begin{theorem}\label{bijections}
The map $\Phi: \cotors \K^{[-d+1,0]}(\proj\La)\to \tors\D^{[-d+2,0]}(\mmod\La)$ defined as $\Y\mapsto \tau_{\geq -d+2}\Y$ is a poset isomorphism. Moreover, it restricts to an isomorphism between the following subposets.
\begin{enumerate}
    \item The set of hereditary cotorsion classes and the set of positive torsion classes. 
    \item The set of complete cotorsion classes and the set of covariantly finite torsion classes. 
    \item The set of hereditary contravariantly finite cotorsion classes and the set of $s$-torsion classes. 
\end{enumerate}
\end{theorem}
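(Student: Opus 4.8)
The plan is to produce an explicit inverse $\Psi$ to $\Phi$ and then to match up the three refinements one level at a time. Given a torsion pair $(\T,\F)$ in $\D^{[-d+2,0]}(\mmod\La)$, set $\Psi(\T):=\tau_{\geq -d+2}^{-1}(\T)=\{Y\in\K^{[-d+1,0]}(\proj\La)\mid \tau_{\geq -d+2}Y\in\T\}$ and take as candidate cotorsion-free partner $\X:=\{X\in\K^{[-d+1,0]}(\proj\La)\mid \Sigma^{-1}\tau_{\leq -1}\nu X\in\F\}$ (this makes sense since $\Sigma^{-1}\tau_{\leq -1}\nu X\in\D^{[-d+2,0]}(\mmod\La)$, as $\nu X$ has cohomology in $[-d+1,0]$). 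The conceptually central step is to check that $(\X,\Psi(\T))$ is a cotorsion pair. Since $\tau_{\geq -d+2}$ is essentially surjective (Proposition~\ref{equiv}), one has $\tau_{\geq -d+2}\Psi(\T)=\T$, and Lemma~\ref{Extlem} then turns the orthogonality computation into one inside $\D^{[-d+2,0]}(\mmod\La)$: $\prescript{\perp_1}{}{\Psi(\T)}=\{X\mid \Hom(\T,\Sigma^{-1}\tau_{\leq -1}\nu X)=0\}=\X$ by the torsion-pair axiom for $\F$. For the reverse orthogonal one needs that every $F\in\F$ arises as $\Sigma^{-1}\tau_{\leq -1}\nu X$ for some $X$; this follows from (a shift of) the essential surjectivity in Corollary~\ref{dual}(1): write $\Sigma F\in\D^{[-d+1,-1]}(\mmod\La)$ as $\tau_{\leq -1}V$ with $V$ a complex of injectives concentrated in degrees $[-d+1,0]$, and put $X:=\nu^{-1}V$. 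Hence $\{\Sigma^{-1}\tau_{\leq -1}\nu X\mid X\in\X\}=\F$, and Lemma~\ref{Extlem} gives $\X^{\perp_1}=\{Z\mid \Hom(\tau_{\geq -d+2}Z,\F)=0\}=\tau_{\geq -d+2}^{-1}(\prescript{\perp}{}{\F})=\Psi(\T)$. So $\Psi$ lands in $\cotors\K^{[-d+1,0]}(\proj\La)$; equivalently, $\Phi$ is surjective.

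Next I would check that $\Phi$ and $\Psi$ are mutually inverse. The identity $\Phi\Psi=\mathrm{id}$ is just $\tau_{\geq -d+2}\Psi(\T)=\T$ noted above. For $\Psi\Phi=\mathrm{id}$ I need $\tau_{\geq -d+2}^{-1}(\tau_{\geq -d+2}\Y)=\Y$ for every cotorsion class $\Y$. The key point is that $\La[d-1]$ is an injective object of $\K^{[-d+1,0]}(\proj\La)$: by Theorem~\ref{Happel}, $\Eb(X,\La[d-1])\cong\mathrm{D}\Hom_{\D^b(\mmod\La)}(\Sigma^{d}\La,\nu X)=\mathrm{D}\Ho^{-d}(\nu X)=0$ because $\nu X$ has cohomology in $[-d+1,0]$. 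Hence $\add\La[d-1]\subseteq\Y$ for every cotorsion class. Since $\K^{[-d+1,0]}(\proj\La)$ is Krull--Schmidt and $\add\La[d-1]$ is functorially finite, every object splits as a ``reduced'' summand (no direct summand in $\add\La[d-1]$) plus one in $\add\La[d-1]$; as $\Y$ is closed under summands and finite direct sums and contains $\add\La[d-1]$, membership in $\Y$ depends only on the reduced summand, which by Proposition~\ref{equiv} is determined up to isomorphism by its image under $\tau_{\geq -d+2}$. This yields $\tau_{\geq -d+2}^{-1}(\tau_{\geq -d+2}\Y)=\Y$. Therefore $\Phi$ is a bijection; since $\Phi=\tau_{\geq -d+2}(-)$ and $\Psi=\tau_{\geq -d+2}^{-1}(-)$ both preserve inclusions, $\Phi$ is an isomorphism of posets.

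For the three refinements the forward implications are exactly Theorem~\ref{tors-cotors}, so it remains to see that $\Psi$ carries positive (resp. $s$-) torsion classes to hereditary (resp. hereditary contravariantly finite) cotorsion classes. If $\T$ is positive, I would show $\Y=\Psi(\T)$ is closed under cones: for a conflation $Y_1\to Y_2\to C$ in $\K^{[-d+1,0]}(\proj\La)$ with $Y_1,Y_2\in\Y$, applying the exact functor $\tau_{\geq -d+2}$ gives a conflation in $\D^{[-d+2,0]}(\mmod\La)$ whose third term is the cone of a morphism between objects of $\T$; by Lemma~\ref{p-implies-s}(3) this cone lies in $\T$, so $C\in\Y$, and Lemma~\ref{s-implies-h} then gives that $\Y$ is hereditary. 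If $\T$ is $s$-torsion, then it is contravariantly finite — for an (Sc)-conflation $T\to C\to F$ the map $T\to C$ is a right $\T$-approximation because $\Hom(\T,\F)=0$ — and positive (Corollary~\ref{pconts}), so $\Psi(\T)$ is hereditary by the previous case and contravariantly finite by Lemma~\ref{approximations}; this establishes (3), and (1) follows similarly.

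The remaining implication, that $\T$ covariantly finite forces $\Y=\Psi(\T)$ to be a complete cotorsion pair, is where I expect the most bookkeeping. By Lemma~\ref{approximations} $\Y$ is covariantly finite, and $\add\La[d-1]\subseteq\Y$. I would first record that $\K^{[-d+1,0]}(\proj\La)$ has enough injectives (for any $c$ the map $c\to c^{-d+1}[d-1]$ is a split mono in degree $-d+1$, hence an inflation by Lemma~\ref{split}, into the injective $c^{-d+1}[d-1]$) and enough projectives (choosing a split surjection $\La^{n}\twoheadrightarrow c^{0}$, which exists because $c^{0}$ is projective, makes $\La^{n}[0]\to c$ a deflation with cocone in $\K^{[-d+2,0]}(\proj\La)\subseteq\K^{[-d+1,0]}(\proj\La)$). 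Then, forming a left $\Y$-approximation of $c$ of the shape $c\to y\oplus c^{-d+1}[d-1]$ — which is an inflation — and noting that its cone, hence (as $\K^{[-d+1,0]}(\proj\La)$ is closed under summands) the cone of the associated minimal left $\Y$-approximation, lies in $\K^{[-d+1,0]}(\proj\La)$, Wakamatsu's Lemma~\ref{WakLem} yields condition (Ca). Condition (Cb) then follows either from the extriangulated version of Salce's lemma, applicable since $\K^{[-d+1,0]}(\proj\La)$ has enough projectives and injectives, or by running the dual of the preceding argument on the injective model $\K^{[-d+2,1]}(\inj\La)$ from Corollary~\ref{dual}. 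Making the enough-projectives/enough-injectives claims and this Wakamatsu/Salce bookkeeping precise is the main obstacle; once (Ca) and (Cb) are in place, $\Y$ is complete, finishing all three refined bijections.
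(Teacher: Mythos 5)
Your proposal is correct and, for the core bijection and for refinements (1) and (3), follows essentially the paper's own route: the same inverse $\Psi=\tau_{\geq -d+2}^{-1}(-)$, the same use of Lemma~\ref{Extlem} together with Corollary~\ref{dual} to verify $\X^{\perp_1}=\Psi(\T)$, the same Krull--Schmidt argument for $\Psi\Phi=\mathrm{id}$, and the same reductions via Lemmas~\ref{s-implies-h}, \ref{p-implies-s} and \ref{approximations}. Two remarks. First, a small imprecision in your hereditary step: $\tau_{\geq -d+2}$ does not send the cone of $f$ to the cone of $\tau_{\geq -d+2}f$; you must first invoke Lemma~\ref{techlem} to identify $\tau_{\geq -d+2}C(f)$ with $\tau_{\geq -d+2}C(\tau_{\geq -d+2}f)$ before applying Lemma~\ref{p-implies-s}(3) --- exactly as the paper does --- so this is a wording issue, not a gap. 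Second, the completeness step in (2) is where you genuinely diverge. The paper proves (Ca) only for objects of $\K^{[-d+2,0]}(\proj\La)$, where the minimal left $\Y$-approximation is automatically an inflation, and then bootstraps to both (Ca) and (Cb) on all of $\K^{[-d+1,0]}(\proj\La)$ via two explicit octahedral diagrams built from the hard truncations $t_{\geq -d+2}$ and $t_{\geq 0}$. You instead get (Ca) on all objects at once by padding the left $\Y$-approximation with $c\to c^{-d+1}[d-1]$ so that it becomes an inflation (Lemma~\ref{split}); your summand argument that the minimal approximation is then also an inflation is sound, and extension-closure of $\Y$ (needed for Wakamatsu) is automatic once Step~1 gives $\Y=\X^{\perp_1}$. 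For (Cb) you appeal to an extriangulated Salce lemma; this does work here, since $\La$ is projective and $\La[d-1]$ injective in $\K^{[-d+1,0]}(\proj\La)$ and $\X=\prescript{\perp_1}{}{\Y}$ is extension-closed and contains the projectives, but that lemma is not established in the paper, so you would need to write out the one octahedron it hides --- which is essentially the paper's second diagram. So: same skeleton, with a cleaner (Ca) and a deferred (Cb).
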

\begin{proof}
    We claim that the poset homomorphism 
    \begin{equation*}
        \begin{split}
            \Psi: \tors\D^{[-d+2,0]}(\mmod\La)&\to \cotors \K^{[-d+1,0]}(\proj\La)\\
            \T&\mapsto \{Y\in \K^{[-d+1,0]}(\proj\La)\mid \tau_{\geq-d+2}Y\in\T\}
        \end{split}
    \end{equation*}is the inverse of $\Phi$. Let $\T\in \tors\D^{[-d+2,0]}(\mmod\La)$. Set $\Y=\{Y\in \K^{[-d+1,0]}(\proj\La)\mid \tau_{\geq-d+2}Y\in\T\}$, and $\X:=\prescript{\perp_1}{}{\Y}$.
    \medskip
    \textbf{Step 1}. The map $\Psi$ is well-defined: For this, we need to prove that $\X^{\perp_1}=\Y$. By definition, $\Y\subseteq\X^{\perp_1}$. Using Lemma \ref{Extlem}, we know that $\X=\{X\in \K^{[-d+1,0]}(\proj\La)\mid \Sigma^{-1}\tau_{\leq -1}\nu X\in\T^\perp\}$, and $\X^{\perp_1}=\{Z\in \K^{[-d+1,0]}(\proj\La)\mid \Hom(\tau_{\geq-d+2}Z, \Sigma^{-1}\tau_{\leq -1}\nu \X)=0\}$. Let $Y\in \X^{\perp_1}$. In order to show that $Y\in \Y$, we need to show that $\tau_{\geq -d+2}Y\in\T=\prescript{\perp}{}{(\T^\perp)}$. Let $F\in\T^\perp$. Then $\Sigma F\in \D^{[-d+1,-1]}(\mmod\La)$. Using Corollary \ref{dual}, there exists $I\in \K^{[-d+2,1]}(\inj\La)$ such that $\tau_{\leq0}I\cong F$. Thus, $\tau_{\leq -1}\Sigma I\cong \Sigma F$. This implies that $\Sigma^{-1}\tau_{\leq -1}\nu(\nu^{-1}\Sigma I)=F\in\T^\perp$. Thus, $\nu^{-1}\Sigma I\in\X$. Since $\Y\in\X^{\perp_1}$, $\Hom(\tau_{\geq -d+2}Y, \Sigma^{-1}\tau_{\leq -1}\nu(\nu^{-1}\Sigma I))=\Hom(\tau_{\geq -d+2}Y, F)=0$. This implies that $\tau_{\geq -d+2}Y\in\T$, and we are done. 
    \medskip
    \textbf{Step 2}. The maps $\Phi$ and $\Psi$ are mutual inverses: This follows from the fact that $\tau_{\geq -d+2}$ is full and essentially surjective with the kernel $\add\La[d-1]$ contained in any cotorsion class, and that $\K^{[-d+1,0]}(\proj\La)$ is a $\Hom$-finite Krull-Schmidt category.
    \medskip
    \textbf{Step 3}. The map $\Psi$ sends positive torsion classes to hereditary cotorsion classes: Suppose $\T$ is positive. Then, using Lemma \ref{p-implies-s}, for all morphisms $g\in\T$, $\tau_{\geq -d+2}C(g)\in\T$. Let $y\xrightarrow{f}y'\to y''$ be a conflation in $\K^{[-d+1,0]}(\proj\La)$ with $y,y'\in\Y$. By definition, $\tau_{\geq -d+2}y$, $\tau_{\geq -d+2}y'\in \T$. Moreover, $\tau_{\geq -d+2}(C(\tau_{\geq -d+2}f))\in\T$. Thus, it follows from Lemma \ref{techlem} that $\tau_{\geq -d+2}y''\cong \tau_{\geq -d+2}(C(\tau_{\geq -d+2}f))\in\T$ and $y''\in\Y$. This shows that $\Y$ is closed under cones, and hence, by Lemma \ref{s-implies-h}, $\Y$ is a hereditary cotorsion class. 
    \medskip
    \textbf{Step 4}. The map $\Psi$ sends covariantly finite torsion classes to complete cotorsion classes: Suppose that $\T$ is covariantly finite. Then using Lemma \ref{approximations}, $\Y$ is also covariantly finite. We now show that $\Y$ is extension closed. Let $y\xrightarrow{f}y'\to y''$ be a conflation in $\K^{[-d+1,0]}(\proj\La)$ with $y,y''\in\Y$. By definition, this implies that $\tau_{\geq -d+2}y$, $\tau_{\geq -d+2}y''\in \T$. Moreover, using Lemma \ref{techlem}, we know that $\tau_{\geq -d+2}(C(\tau_{\geq -d+2}f))\cong \tau_{\geq -d+2}y''$. Using the triangle $$\tau_{\leq -d+1}(C(\tau_{\geq -d+2}f))\to C(\tau_{\geq -d+2}f)\to \tau_{\geq -d+2}(C(\tau_{\geq -d+2}f)),$$ we get that for all $F\in\F$, $\Hom(C(\tau_{\geq -d+2}f),F)=0$. Thus, using the triangle $$\tau_{\geq -d+2}y\xrightarrow{\tau_{\geq -d+2}f}\tau_{\geq -d+2}y'\to C(\tau_{\geq -d+2}f),$$ we get that $\Hom(\tau_{\geq -d+2}y',F)=0$ for all $F\in\F$. Thus, $\tau_{\geq -d+2}y'\in \prescript{\perp}{}{F}=\T$ and $y'\in \Y$.
    
    Let $C\in\K^{[-d+2,0]}(\proj\La)$. Let $f:C\to Y$ be the minimal $\Y$-approximation of $C$. This gives a conflation $C\xrightarrow{f}Y\to C(f)$ in $\K^{[-d+1,0]}(\proj\La)$. Since $\Y$ is closed under extensions using Lemma \ref{WakLem}, we get that $C(f)\in \prescript{\perp_1}{}{\Y}=\X$. Thus, for every $C\in \K^{[-d+2,0]}(\proj\La)$, there exists a conflation $C\to Y\to C(f)$ with $Y\in\Y$ and $C(f)\in\X$.

    Finally, let $C\in \K^{[-d+1,0]}(\proj\La)$. We have triangles $$\Sigma^{-1}t_{\leq -d+1}C\to t_{\geq -d+2}C\to C$$ and $$\Sigma^{-1}t_{\leq -1}C\to t_{\geq 0}C\to C$$ with $\Sigma^{-1}t_{\leq -1}C, t_{\geq -d+2}C\in \K^{[-d+2,0]}(\proj\La)$. Using the above argument, we know that there exist conflations $t_{\geq -d+2}C\to y\to x$ and $\Sigma^{-1}t_{\leq -1}C\to y'\to x'$ in $\K^{[-d+1,0]}(\proj\La)$ with $y,y'\in\Y$ and $x,x'\in \X$. Using the octahedral axiom for the following diagrams, 
    \begin{center}
        \begin{tikzcd}
        & \Sigma^{-1}x \arrow[d] \arrow[r, Rightarrow, no head] & \Sigma^{-1}x \arrow[d] &   \\
        \Sigma^{-1}t_{\leq -d+1}C \arrow[r] \arrow[d, Rightarrow, no head] & t_{\geq -d+2}C \arrow[d] \arrow[r]                    & C \arrow[d] \arrow[r]  & t_{\leq -d+1}C \arrow[d, Rightarrow, no head] \\
        \Sigma^{-1}t_{\leq -d+1}C \arrow[r]                                & y \arrow[d] \arrow[r]                                 & d \arrow[d] \arrow[r]  & t_{\leq -d+1}C \\& x \arrow[r, Rightarrow, no head] & x   & \end{tikzcd} 
    \end{center}
        \begin{center}
           \begin{tikzcd}
            & \Sigma^{-1}x' \arrow[d] \arrow[r, Rightarrow, no head] & \Sigma^{-1}x' \arrow[d]        &  \\
            \Sigma^{-1}C \arrow[r] \arrow[d, Rightarrow, no head] & \Sigma^{-1}t_{\leq-1}C \arrow[d] \arrow[r]             & t_{\geq0}C \arrow[d] \arrow[r] & C \arrow[d, Rightarrow, no head] \\\Sigma^{-1}C \arrow[r]   & y' \arrow[d] \arrow[r]                                 & e \arrow[d] \arrow[r]          & C  \\& x' \arrow[r, Rightarrow, no head]& x'          &            \end{tikzcd} 
        \end{center}
    we get conflations $C\to d\to x$ and $y'\to e\to C$. Since $y, t_{\leq -d+1}C\in\Y$, and $\Y$ is closed under extensions, we get that $d\in \Y$. Similarly, since $x', t_{\geq 0}C\in\X$, and $\X$ is closed under extensions, we get that $e\in \X$. Thus, using Remark \ref{complete}$, (\X,\Y)$ is a complete cotorsion pair.
    \medskip
    \textbf{Step 5}. The map $\Psi$ sends $s$-torsion classes to hereditary contravariantly finite cotorsion classes: Suppose $\T$ is an $s$-torsion class. Since an $s$-torsion class is positive and contravariantly finite, by Step 3 and Lemma \ref{approximations}, $\Y$ is hereditary and contravariantly finite in $\K^{[-d+1,0]}(\proj\La)$.
\end{proof}
\section{Silting objects, cotorsion pairs, and torsion pairs}
The main goal of this section is to prove the following theorem.
\begin{theorem}\label{main}
The map $\Phi$ defined in Theorem \ref{tors-cotors} and the maps $\psi$ and $\psi'$ given in Definition \ref{psi} and Definition \ref{psi2} restrict to give the following commutative triangle of poset isomorphisms.  
\begin{equation}\label{res-diag}
    \begin{tikzcd}
    d\mbox{-}\silt\La \arrow[rr, "\psi"] \arrow[rrd, "\psi'"'] &  & \chcotors \K^{[-d+1,0]}(\proj\La)\arrow[d, "\Phi"] \\ &  & \fptors \D^{[-d+2,0]}(\mmod\La)                     
    \end{tikzcd}
    \end{equation}
\end{theorem}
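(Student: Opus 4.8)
The plan is to realise the triangle as a composite of three bijections, each already equipped with structure: the \cite{KY} correspondence between silting objects and (co-)$t$-structures, a ``windowing'' bijection between suitably bounded (co-)$t$-structures and (co)torsion pairs on the truncated categories, and the isomorphism $\Phi$ of Theorem~\ref{bijections}. Note first that $\Phi$ already restricts, by Theorem~\ref{bijections}, to a poset isomorphism $\chcotors\,\K^{[-d+1,0]}(\proj\La)=\ccotors\cap\hcotors \xrightarrow{\sim} \covtors\cap\ptors\,\D^{[-d+2,0]}(\mmod\La)$. Hence, once $\psi$ and $\psi'$ are shown to restrict to poset isomorphisms $d\mbox{-}\silt\La\to\chcotors\,\K^{[-d+1,0]}(\proj\La)$ and $d\mbox{-}\silt\La\to\covtors\cap\ptors\,\D^{[-d+2,0]}(\mmod\La)$ with $\Phi\circ\psi=\psi'$, the identification $\covtors\cap\ptors=\fptors$ of the bottom vertex is automatic: the composite $\psi'=\Phi\circ\psi$ is the \cite{KY} map, which takes values in functorially finite torsion classes, so the two descriptions of its image agree. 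The first step toward this is the dictionary: for $M\in\silt\La$ with associated bounded co-$t$-structure $\psi(M)=(\U_M,\V_M)$ on $\K^b(\proj\La)$ and algebraic $t$-structure $\psi'(M)$ on $\D^b(\mmod\La)$ (Definitions~\ref{psi},~\ref{psi'}), one checks that \emph{$M$ is $d$-term iff its co-heart $\add M$ lies in $\K^{[-d+1,0]}(\proj\La)$}, equivalently iff $\psi(M)$ lies between the natural co-$t$-structure $(\K^{\geq0}(\proj\La),\K^{\leq0}(\proj\La))$ and its shift $(\K^{\geq-d+1}(\proj\La),\K^{\leq-d+1}(\proj\La))$ (and likewise for $\psi'(M)$); this is immediate from the description of the co-heart and of the (co-)$t$-structure orders via $\Hom(M,\Sigma^{\bullet}-)$-vanishing.

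The heart of the argument is the \textbf{windowing} step. Intersection with the window $\K^{[-d+1,0]}(\proj\La)$ sends a sandwiched bounded co-$t$-structure $(\U,\V)$ to the pair $(\X,\Y)$ with $\Y=\V\cap\K^{[-d+1,0]}(\proj\La)$ and $\X={}^{\perp_1}\Y$, which I claim is a \emph{complete hereditary} cotorsion pair; conversely every complete hereditary cotorsion pair on $\K^{[-d+1,0]}(\proj\La)$ arises uniquely this way, with $(\U,\V)$ recovered by generating the co-aisle from $\X$ and $\Sigma^{-d+1}\proj\La$. Dually, restriction to $\D^{[-d+2,0]}(\mmod\La)$ gives a bijection between sandwiched algebraic $t$-structures on $\D^b(\mmod\La)$ and functorially finite positive torsion pairs on $\D^{[-d+2,0]}(\mmod\La)$. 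The orthogonality axioms of a cotorsion/torsion pair are formal consequences of the (co-)$t$-structure axioms; hereditariness of $(\X,\Y)$ follows from Lemma~\ref{s-implies-h} since $\Y$ is closed under cones (a co-aisle is $\Sigma$-closed and extension-closed, hence cone-closed, and cones formed inside the window stay there), positivity of the torsion pair is automatic for one cut out by a $t$-structure (as $\Hom(\mathcal{D}^{\leq p},\Sigma^{-k}\mathcal{D}^{\geq p+1})=0$ for $k\geq0$), and functorial finiteness of the torsion pair corresponds to algebraicity (finite length of the heart), which by Lemma~\ref{approximations} matches covariant finiteness of $\Y$ on the other side. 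The one genuinely non-formal point is \emph{completeness}: one must promote the $(\U,\V)$-truncation triangle of an object $C\in\K^{[-d+1,0]}(\proj\La)$ to the conflations (Ca), (Cb) \emph{inside the window}. This is carried out exactly as in Step~4 of the proof of Theorem~\ref{bijections}: splice the $(\U,\V)$-truncation triangle with the hard-truncation triangles $t^{\geq j}C\to C\to t^{\leq j-1}C$ for $j=0$ and $j=-d+1$, and apply the octahedral axiom twice; the sandwiched hypothesis is precisely what forces the resulting middle terms into $\K^{[-d+1,0]}(\proj\La)$, and hereditariness is what supplies the relevant vanishing beyond pure degree reasons.

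Composing the dictionary with windowing produces the maps $\psi:d\mbox{-}\silt\La\xrightarrow{\sim}\chcotors\,\K^{[-d+1,0]}(\proj\La)$ and $\psi':d\mbox{-}\silt\La\xrightarrow{\sim}\ftors\cap\ptors\,\D^{[-d+2,0]}(\mmod\La)$; both are order-preserving with order-preserving inverses because each constituent bijection is, all the orders involved being governed by inclusion of co-aisles/aisles, equivalently by the condition $\Hom(N,\Sigma^{>0}M)=0$ on silting objects and $\mathcal T_1\subseteq\mathcal T_2$ on (co)torsion classes. Commutativity $\Phi\circ\psi=\psi'$ holds because the equivalence $\tau_{\geq-d+2}:\K^{[-d+1,0]}(\proj\La)/\add\La[d-1]\xrightarrow{\sim}\D^{[-d+2,0]}(\mmod\La)$ of Proposition~\ref{equiv} intertwines the windowed co-$t$-structure $\psi(M)$ with the windowed $t$-structure $\psi'(M)$: concretely, $\tau_{\geq-d+2}$ carries the cotorsion class $\Y_M$ onto the torsion class of $\psi'(M)$, so $\Phi(\psi(M))=(\tau_{\geq-d+2}\Y_M,\ (\tau_{\geq-d+2}\Y_M)^{\perp})=\psi'(M)$ by the construction of $\Phi$ in Theorem~\ref{tors-cotors}. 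Since $\Phi$ is itself a poset isomorphism on these subposets by Theorem~\ref{bijections}, the commutative triangle forces $\covtors\cap\ptors=\fptors$, completing the identification of the bottom vertex.

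The main obstacle I expect is the completeness claim in the windowing step, i.e.\ upgrading the co-$t$-structure truncations, which a priori only live in $\K^b(\proj\La)$, to conflations inside $\K^{[-d+1,0]}(\proj\La)$. This amounts to reproducing, with the roles of $\K^{[-d+1,0]}(\proj\La)$ and $\D^{[-d+2,0]}(\mmod\La)$ interchanged, the double-octahedron computation of Step~4 of Theorem~\ref{bijections}, and carefully tracking which of the two hard truncations $t^{\leq0}$ and $t^{\geq-d+1}$ intervenes in each degree. A secondary, purely bookkeeping hurdle is fixing the shift conventions so that the three poset structures literally coincide rather than agreeing up to an order reversal.
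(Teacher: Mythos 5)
Your proposal is correct and follows essentially the same route as the paper: the ``windowing'' of the \cite{KY} (co-)$t$-structures is exactly the content of Lemma \ref{partiallemma} and the construction of the inverse $\chi$ (for $\psi$) and of Lemma \ref{stors} (for $\psi'$, where the truncation triangle staying in the window yields an $s$-torsion pair and hence the contravariant finiteness you need for the bottom vertex), and the intertwining of the two windows by $\tau_{\geq -d+2}$ is Proposition \ref{commute}. The only point stated too quickly is that the identification $\covtors\cap\ptors=\fptors$ on the image is ``automatic'' --- it rests precisely on that non-formal $s$-torsion argument --- but you do supply it later in the windowing step.
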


\subsection{Definition of \texorpdfstring{$\psi$}{psi}}
\begin{definition}\label{psi}
    Let $M$ be a $d$-term silting object in $\K^b(\proj\La)$. Define
    \begin{align*}
    \X_M:=&\mbox{ the smallest full subcategory of }\K^b(\proj \La)\mbox{ containing } \{\Sigma^m M\mid m\leq 0\}\\&\mbox{ and closed under extensions } \mbox{ and summands,}\\
    \Y_M:=&\mbox{ the smallest full subcategory of }\K^b(\proj \La)\mbox{ containing } \{\Sigma^m M\mid m\geq 0\}\\&\mbox{ and closed under extensions } \mbox{ and summands.}
    \end{align*}
    Set $\psi(M):=(\X_M\cap \K^{[-d+1,0]}(\proj\La),\Y_M\cap \K^{[-d+1,0]}(\proj\La))$.
\end{definition}
Using \cite[Theorem~6.1]{KY}, we know that the map $M\mapsto (\X_M,\Y_M)=:\Theta(M)$ gives a poset isomorphism from $\silt\K^b(\proj \La)$ to the poset of bounded co-$t$-structures on $\K^b(\proj \La)$. Moreover, the inverse of this map is given by taking the additive generator of the co-heart of a bounded co-$t$-structure. Set $\X'_M:=\X_M\cap \K^{[-d+1,0]}(\proj\La)$ and $\Y'_M=\Y_M\cap \K^{[-d+1,0]}(\proj\La)$.

\begin{lemma}\label{partiallemma}
    The map $\psi$ given in Definition \ref{psi} induces an injective poset homomorphism $\psi: d\mbox{-}\silt\La\to \chcotors\K^{[-d+1,0]}(\proj\La)$. 
\end{lemma}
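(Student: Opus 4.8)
The plan is to show three things: (i) for a $d$-term silting object $M$, the pair $\psi(M) = (\X'_M, \Y'_M)$ is a cotorsion pair in $\K^{[-d+1,0]}(\proj\La)$; (ii) it is complete and hereditary; (iii) the assignment $M \mapsto \psi(M)$ is order-preserving and injective. The main input is the isomorphism $\Theta$ of \cite[Theorem~6.1]{KY} between $\silt\La$ and bounded co-$t$-structures on $\K^b(\proj\La)$, together with the structure theory of co-$t$-structures: $(\X_M, \Y_M)$ being a co-$t$-structure means $\Sigma^{-1}\X_M \subseteq \X_M$, $\Sigma\Y_M \subseteq \Y_M$, $\Hom(\X_M, \Sigma\Y_M) = 0$, and every object sits in a triangle $x \to c \to y$ with $x \in \X_M$, $y \in \Sigma\Y_M$. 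I would also use that the co-heart $\X_M \cap \Sigma\Y_M = \add M$ and that $M$ being $d$-term means $M \in \K^{[-d+1,0]}(\proj\La)$.

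First I would verify the orthogonality and closure. Since $\Eb(X,Y) = \Hom(X, \Sigma Y)$ in the extriangulated structure on $\K^{[-d+1,0]}(\proj\La)$, the vanishing $\Eb(\X'_M, \Y'_M) = 0$ follows directly from $\Hom(\X_M, \Sigma\Y_M) = 0$. For the "only if" directions of the cotorsion pair axioms, the key point is that an object $X \in \K^{[-d+1,0]}(\proj\La)$ with $\Eb(X, \Y'_M) = 0$ must already lie in $\X'_M$: here I would use the co-$t$-structure truncation triangle $x \to X \to y$ with $x \in \X_M$, $y \in \Sigma\Y_M$, argue (using that $X$ is $d$-term and the degrees of the co-$t$-structure) that $y \in \K^{[-d+1,0]}(\proj\La)$, hence $y \in \Y'_M$, and then the vanishing $\Eb(X, y) = 0$ forces the triangle to split, so $X$ is a summand of $x \in \X_M$, thus $X \in \X'_M$. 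The dual argument handles $\Y'_M$. This is where one must be careful that truncating to $\K^{[-d+1,0]}(\proj\La)$ interacts correctly with the co-$t$-structure; I expect this to be the main obstacle — one needs that for $d$-term objects the co-$t$-structure truncations can be chosen inside the truncated category, which should follow because $M$ itself is concentrated in degrees $[-d+1,0]$, so the full subcategory $\Y'_M$ is generated by $\Sigma^m M$ for $m \geq 0$ intersected with the window, and only finitely many shifts are relevant.

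Next, completeness: conditions (Ca) and (Cb) of the cotorsion pair in $\K^{[-d+1,0]}(\proj\La)$ follow from the co-$t$-structure truncation triangles, after intersecting with the window — given $c \in \K^{[-d+1,0]}(\proj\La)$, the triangle $x \to c \to y$ from the co-$t$-structure rotates to a conflation once we check the relevant terms stay $d$-term, which again uses the bound on $M$. For hereditariness, by Lemma~\ref{s-implies-h} it suffices to show $\Eb^2(\X'_M, \Y'_M) = 0$, i.e. $\Hom(\X_M, \Sigma^2 \Y_M) = 0$; but $\Sigma^2\Y_M \subseteq \Sigma\Y_M$ since co-$t$-structures are closed under positive shifts, so this is immediate. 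Finally, for injectivity and order-preservation: if $\psi(M) = \psi(N)$, then $\Y'_M = \Y'_N$, and since $M \in \Y'_M$ (as $M = \Sigma^0 M$ is $d$-term) and $M$ is, up to summands, recoverable as the co-heart generator — more precisely $\add M = \X_M \cap \Sigma\Y_M$ and one checks $\X'_M \cap \Sigma\Y'_M = \add M$ — we recover $M$ from $\psi(M)$, giving injectivity; order-preservation follows because $P \leq Q$ in $d\text{-}\silt\La$ translates to an inclusion of the associated $\Y_P \subseteq \Y_Q$ (equivalently $\X_P \supseteq \X_Q$) under $\Theta$, which restricts to the window. Throughout I would lean on $\K^{[-d+1,0]}(\proj\La)$ being Hom-finite Krull–Schmidt so that summands and minimal approximations behave well.
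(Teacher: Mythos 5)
Your overall strategy for well-definedness matches the paper's: establish $\Eb(\X'_M,\Y'_M)=0$ from the co-$t$-structure, use the degree bounds $\X_M\subseteq\K^{\geq -d+1}(\proj\La)$ and $\Y_M\subseteq\K^{\leq 0}(\proj\La)$ (which hold because $M$ is $d$-term) to show that the co-$t$-structure approximation triangles of $C$ and of $\Sigma C$ stay inside the window, and deduce hereditariness from shift-closure via Lemma \ref{s-implies-h}. The paper streamlines the cotorsion-pair axioms by invoking Remark \ref{complete} (orthogonality plus (Ca) and (Cb) already yield a complete cotorsion pair), whereas you re-derive the ``only if'' directions by hand with a splitting argument; that is the same mathematics unpacked, and the degree-chase you anticipate is exactly the one the paper performs for (Ca)/(Cb) --- with the small precision that it is $\Sigma^{-1}y$, not $y$, that lands in $\Y'_M$.

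The one step that would fail as written is injectivity. You propose to recover $M$ from $\psi(M)$ via the formula $\add M=\X_M\cap\Sigma\Y_M$, restricted to the window. With the paper's conventions this intersection is zero: $\Hom(\X_M,\Sigma\Y_M)=0$ forces the identity of any object of $\X_M\cap\Sigma\Y_M$ to vanish. The co-heart is $\X_M\cap\Y_M=\add M$, and since $\add M\subseteq\K^{[-d+1,0]}(\proj\La)$ one does get $\X'_M\cap\Y'_M=\add M$; this one-shift correction repairs your argument and in fact yields a shorter proof of injectivity than the paper's, which instead shows that $\Y'_M=\Y'_{M'}$ forces $\Y_M=\Y_{M'}$ by a hard-truncation argument (using that $\Sigma^{-1}t_{\leq -d}Y\in\X_M^{\perp_1}\cap\X_{M'}^{\perp_1}$ and that $\Y_{M'}$ is closed under cones) and then appeals to the bijection with co-$t$-structures. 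So: fix the shift in the co-heart formula and your plan goes through.
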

\begin{proof}
    We first show that $\psi$ is well-defined. Since $(\X_M,\Y_M)$ is a co-$t$-structure on $\K^b(\proj \La)$, $\Eb(\X'_M,\Y'_M)=0$. 

    Moreover, since $M$ is a $d$-term silting object, and $\K^{\leq 0}(\proj\La)$ and $\K^{\geq -d+1}(\proj\La)$ are full subcategories of $\K^b(\proj \La)$ closed under extensions and summands containing $\{\Sigma^m M\mid m\geq 0\}$ and $\{\Sigma^m M\mid m\leq 0\}$ respectively, we get that $\Y_M\subseteq \K^{\leq 0}(\proj\La)$ and $\X_M\subseteq \K^{\geq -d+1}(\proj\La)$.

    %Let $X\in \prescript{\perp_1}{}{(\Y')}$. Then there exists a triangle $T\xrightarrow{f} X\xrightarrow{g} F$ with $T\in \X$ and $F\in \Sigma \Y$. Thus, $F\in \K^{\leq -1}(\proj\La)$ and $T\in \K^{\geq -d+1}(\proj\La)$. Since $\K^{\geq -d}(\proj\La)$ is extension closed, using the triangle $X\to F\to\Sigma T$, we get that $F\in \K^{\geq -d}(\proj\La)$. Thus, $\Sigma^{-1}F\in \K^{[-d+1,0]}(\proj\La)$. Hence, $\Sigma^{-1}F\in \Y'$ and $\Sigma^{-1}(g)=0$. Thus, $g=0$ and $T\cong X\oplus \Sigma^{-1}(F)$. In particular, $X$ is a direct summand of $T$, which implies that $X\in\X'$. Thus, $\X'=\prescript{\perp_1}{}{(\Y')}$.

    %Now, let $Y\in (\X')^{\perp_1}$. Then there exists a triangle $T\xrightarrow{f} \Sigma Y\xrightarrow{g} F$ with $T\in \X$ and $F\in \Sigma \Y$. Thus, $F\in \K^{\leq -1}(\proj\La)$ and $T\in \K^{\geq -d+1}(\proj\La)$. Since $\K^{\leq 0}(\proj\La)$ is extension closed, using the triangle $\Sigma^{-1}F\to T\to \Sigma Y$, we get that $T\in \K^{\leq 0}(\proj\La)$. Thus, $T\in \K^{[-d+1,0]}(\proj\La)$. Hence, $T\in \X'$ and $f=0$. Thus, $\Sigma^{-1}F\cong  T\oplus Y$. In particular, $Y$ is a direct summand of $\Sigma^{-1}F$, which implies that $Y\in\Y'$. Thus, $\Y'=(\X')^{\perp_1}$.

    Let $C\in \K^{[-d+1,0]}(\proj\La)$. Then there exists a triangle $T\xrightarrow{f} \Sigma C\xrightarrow{g} F$ with $T\in \X_M$ and $F\in \Sigma \Y_M$. Thus, $F\in \K^{\leq -1}(\proj\La)$ and $T\in \K^{\geq -d+1}(\proj\La)$. Since $\K^{\leq 0}(\proj\La)$ is extension closed, using the triangle $\Sigma^{-1}F\to T\to \Sigma C$, we get that $T\in \K^{\leq 0}(\proj\La)$. Similarly, using the triangle $\Sigma C \to F \to \Sigma T$, we get that $F\in \K^{\geq -d}(\proj\La)$. Thus, we get a triangle $C\to \Sigma^{-1}F\to T$ with $\Sigma^{-1}F\in\Y'_M$ and $T\in \X'_M$.

    Let $C\in \K^{[-d+1,0]}(\proj\La)$. Then there exists a triangle $T\xrightarrow{f} C\xrightarrow{g} F$ with $T\in \X_M$ and $F\in \Sigma \Y_M$. Thus, $F\in \K^{\leq -1}(\proj\La)$ and $T\in \K^{\geq -d+1}(\proj\La)$. Since $\K^{\geq -d}(\proj\La)$ is extension closed, using the triangle $X\to F\to\Sigma T$, we get that $F\in \K^{\geq -d}(\proj\La)$. Thus, $\Sigma^{-1}F\in \K^{[-d+1,0]}(\proj\La)$. Similarly, using the triangle $\Sigma^{-1}F\to T\to C$, we get that $T\in \K^{[-d+1,0]}(\proj\La)$. Thus, we get a triangle $\Sigma^{-1}F\to T\to C$ with $\Sigma^{-1}F\in\Y'_M$ and $T\in \X'_M$.

    Thus, using Remark \ref{complete}$, (\X'_M,\Y'_M)$ is a complete cotorsion pair in $\K^{[-d+1,0]}(\proj\La)$. Since $\X_M$ and $\Y_M$ are closed under cocones and cones respectively, so are $\X'_M$ and $\Y'_M$, which implies that $(\X'_M,\Y'_M)$ is hereditary by Lemma \ref{s-implies-h}. 

    Now, suppose $M\leq N$ in $d$-$\silt\La$. Then $\X_M\supseteq \X_N$ which implies that $\X'_M\supseteq\X'_N$. Thus, $\psi$ is a poset homomorphism.

    We finally show that $\psi$ is injective. Let $M,M'\in d\mbox{-}\silt\La$ such that $\Y'_M=\Y'_{M'}$. Using the bijection between silting objects and co-$t$-structures \cite[Theorem~6.1]{KY}, it is enough to show that $\Y_M=\Y_{M'}$. Suppose not. Without loss of generality, let $Y\in \Y_M\setminus\Y_{M'}$. Since $\Y_M\subseteq \K^{\leq 0}(\proj\La)$, $Y\in \K^{\leq 0}(\proj\La)$. We have a triangle $\Sigma^{-1}t_{\leq -d}Y\to t_{\geq -d+1}Y\to Y$. Since $\X_M, \X_{M'}\subseteq \K^{\geq -d+1}(\proj\La)$, we get that $\Sigma^{-1}t_{\leq -d}Y\in \X_M^{\perp_1}\cap \X_{M'}^{\perp_1}=\Y_M\cap \Y_{M'}$. Thus, $t_{\geq -d+1}Y\in \Y_M$, since $\Y_M$ is closed under extensions. Since $t_{\geq -d+1}Y\in \K^{[-d+1,0]}(\proj\La)$, we get that $t_{\geq -d+1}Y\in \Y'_M=\Y'_{M'}$. Since $\Y_{M'}$ is closed under cones, we get that $Y\in \Y_{M'}$, a contradiction. Hence $\Y_M=\Y_{M'}$ which implies that $M\cong M'$.
    
\end{proof}
\begin{theorem}
        The map $\psi$ is an isomorphism of posets.
    \end{theorem}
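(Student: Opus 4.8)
By Lemma \ref{partiallemma} the map $\psi$ is an injective poset homomorphism $d\mbox{-}\silt\La\to\chcotors\K^{[-d+1,0]}(\proj\La)$, so it suffices to construct an order-preserving two-sided inverse $\chi$. Given a complete hereditary cotorsion pair $(\X,\Y)$ in $\K^{[-d+1,0]}(\proj\La)$, let $\U$ be the smallest full subcategory of $\K^b(\proj\La)$ that contains $\Sigma^m\X$ for all $m\leq 0$ and is closed under extensions and direct summands, and let $\V$ be the smallest full subcategory that contains $\Sigma^m\Y$ for all $m\geq 1$ and is closed under extensions and direct summands. The plan is to prove that $(\U,\V)$ underlies a bounded co-$t$-structure on $\K^b(\proj\La)$ (with the conventions of \cite{KY}, so that its decomposition triangles read $U\to C\to V$ with $U\in\U$, $V\in\V$); then by \cite[Theorem~6.1]{KY} its co-heart has an additive generator $M$ which is a silting object, and I put $\chi(\X,\Y):=M$.

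Several ingredients are routine. Since every object of $\Sigma\Y\subseteq\K^{[-d,-1]}(\proj\La)$ is a complex concentrated in degrees $\leq-1$, we have $\Eb(\La,\Y)=\Hom_{\K^b(\proj\La)}(\La,\Sigma\Y)=0$, hence $\add\La\subseteq\X$; dually $\add\Sigma^{d-1}\La\subseteq\Y$ (this containment is already used in the proof of Theorem \ref{bijections}). Filtering an arbitrary bounded complex by its stupid truncations, whose subquotients are shifts of objects of $\add\La$, then gives $\K^{\geq 0}(\proj\La)\subseteq\U$ and $\K^{\leq -d}(\proj\La)\subseteq\V$; in particular $\Sigma^{-N}C\in\U$ and $\Sigma^{N}C\in\V$ for $N\gg 0$, which is boundedness. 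The class $\U$ is stable under $\Sigma^{-1}$ and $\V$ under $\Sigma$ because their generators are, and the orthogonality $\Hom(\U,\Sigma\V)=0$ follows by dévissage along the displayed generators: it reduces to $\Hom_{\K^b(\proj\La)}(\Sigma^{a}X,\Sigma^{b}Y)=\Eb^{b-a}(X,Y)=0$ for $X\in\X$, $Y\in\Y$ and $a\leq 0<b$, which holds since $\Eb^{1}(\X,\Y)=0$ ($(\X,\Y)$ is a cotorsion pair) and $\Eb^{k}(\X,\Y)=0$ for $k\geq 2$ ($(\X,\Y)$ is hereditary).

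The crux is the decomposition: for every $C\in\K^b(\proj\La)$ there is a triangle $U\to C\to V$ with $U\in\U$, $V\in\V$. I would prove it by truncating a complex representative of $C$ down into the window $\K^{[-d+1,0]}(\proj\La)$ and gluing the pieces back. The hard-truncation triangle $t^{\geq -d+1}C\to C\to t^{\leq -d}C$ has $t^{\leq -d}C\in\K^{\leq -d}(\proj\La)\subseteq\V$, so it reduces the problem to $C_1:=t^{\geq -d+1}C$: if $U_1\to C_1\to V_1$ is a decomposition of $C_1$, then $U_1\to C\to V$ is one of $C$, where (by an octahedron) $V$ is an extension of $t^{\leq -d}C$ by $V_1$, hence lies in $\V$. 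Next, the triangle $t^{\geq 1}C_1\to C_1\to t^{\leq 0}C_1$ has $t^{\geq 1}C_1\in\K^{\geq 1}(\proj\La)\subseteq\U$ and $C_2:=t^{\leq 0}C_1\in\K^{[-d+1,0]}(\proj\La)$, and condition (Cb) of completeness for $(\X,\Y)$ yields a conflation $y'\to x'\to C_2$, i.e.\ a triangle $x'\to C_2\to\Sigma y'$ with $x'\in\X\subseteq\U$ and $\Sigma y'\in\Sigma\Y\subseteq\V$. Forming the homotopy pullback $U_1$ of $C_1\to C_2\leftarrow x'$, one obtains triangles $t^{\geq 1}C_1\to U_1\to x'$ and $U_1\to C_1\to\Sigma y'$: the first exhibits $U_1$ as an extension of $x'\in\U$ by $t^{\geq 1}C_1\in\U$, hence $U_1\in\U$, and the second is the sought decomposition of $C_1$ (with $V_1=\Sigma y'\in\V$). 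This is the main obstacle of the whole argument: it is the same kind of iterated octahedral and homotopy-pullback bookkeeping as in Step~4 of the proof of Theorem \ref{bijections}, and care is needed to keep each intermediate cone inside the correct one of $\U,\V$; one also has to record the analogous (dual) decomposition, which is handled the same way.

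It remains to check that $\chi$ is well defined and inverse to $\psi$, which is formal. Writing $\Theta$ for the Keller--Yang bijection, so that $\Theta(M)=(\X_M,\Y_M)$ with $\X_M=\U$ and $\Y_M=\Sigma^{-1}\V$, the co-heart of $(\U,\V)$ is contained in $\X_M\cap\Y_M\subseteq\K^{\geq -d+1}(\proj\La)\cap\K^{\leq 0}(\proj\La)=\K^{[-d+1,0]}(\proj\La)$, so $M$ is $d$-term and $\chi$ indeed lands in $d\mbox{-}\silt\La$. Then $\psi(M)=(\U\cap\K^{[-d+1,0]}(\proj\La),\ \Sigma^{-1}\V\cap\K^{[-d+1,0]}(\proj\La))$, and since $(\X,\Y)$ is already a cotorsion pair inside the window and $\Hom(\U,\Sigma\V)=0$, one verifies $\U\cap\K^{[-d+1,0]}(\proj\La)=\X$ and $\Sigma^{-1}\V\cap\K^{[-d+1,0]}(\proj\La)=\Y$, so $\psi\circ\chi=\mathrm{id}$. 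Conversely, for $M\in d\mbox{-}\silt\La$ with $\psi(M)=(\X'_M,\Y'_M)$, the subcategory generated from $(\X'_M,\Y'_M)$ as above is exactly $(\X_M,\Sigma\Y_M)$, because $M\in\X'_M\subseteq\X_M$ while $\X_M$ is closed under $\Sigma^{-1}$, extensions and summands, and dually for $\Y_M$; hence $\chi\circ\psi=\mathrm{id}$. Finally $\chi$ is a poset homomorphism: $(\X_1,\Y_1)\preceq(\X_2,\Y_2)$ forces $\U_1\supseteq\U_2$, i.e.\ $\X_{M_1}\supseteq\X_{M_2}$, whence $M_1\leq M_2$ by the order-compatibility of $\Theta$ already invoked in Lemma \ref{partiallemma}. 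Therefore $\psi$ and $\chi$ are mutually inverse isomorphisms of posets.
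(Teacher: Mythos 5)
Your proof is correct and follows essentially the same route as the paper: you generate a bounded co-$t$-structure on $\K^b(\proj\La)$ from the complete hereditary cotorsion pair (your $\U$, $\V$ coincide with the paper's $C(\X')$, $\Sigma C(\Y')$), verify the axioms using completeness for the decomposition triangle and hereditarity for the orthogonality/shift-stability, and invoke the Keller--Yang bijection $\Theta$ to extract the $d$-term silting object. The only differences are cosmetic (a homotopy pullback in place of the paper's two octahedra, and a direct check of $\chi\circ\psi=\mathrm{id}$ where the paper instead uses $\psi\circ\chi=\mathrm{id}$ together with the injectivity of $\psi$ from Lemma \ref{partiallemma}); note also that only one decomposition triangle is required for a co-$t$-structure, so the ``dual decomposition'' you mention is not needed.
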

    \begin{proof}
    We will show this by constructing an inverse poset homomorphism $\chi:\chcotors\K^{[-d+1,0]}(\proj\La)\to d\mbox{-}\silt\La $ of $\psi$.

    Let $\Y'$ be a complete hereditary cotorsion class in $\K^{[-d+1,0]}(\proj\La)$. Set $\X'=\prescript{\perp_1}{}{\Y'}$. Define $C(\X')$ to be the smallest full subcategory of $\K^b(\proj \La)$ closed under summands and extensions containing $\X'$ and $\K^{\geq 0}(\proj\La)$ and $C(\Y')$ to be the smallest full subcategory of $\K^b(\proj \La)$ closed under summands and extensions containing $\Y'$ and $\K^{\leq -d+1}(\proj\La)$. Note that $C(\X')\subseteq\K^{\geq -d+1}(\proj\La)$ and $C(\Y')\subseteq \K^{\leq 0}(\proj\La)$. We will show that $(C(\X'),C(\Y'))$ is a bounded co-$t$-structure on $\K^b(\proj\La)$ whose co-heart lies in $\K^{[-d+1,0]}(\proj\La)$, and set $\chi(\Y'):=\Theta^{-1}(C(\X'),C(\Y'))$.
    
    Using the fact that both $\Eb(\X',\Y')$ and $\Eb(\X',\K^{\leq -d+1}(\proj\La))$ vanish, and that $\X'^{\perp_1}$ in $\K^b(\proj\La)$ is closed under extensions and summands, we get that $$\Eb(\X',C(\Y'))=0.$$ Similarly, $\Eb(\K^{\geq 0}(\proj\La),C(\Y'))=0$. Thus, $\X',\K^{\geq 0}(\proj\La)\subseteq \prescript{\perp_1}{}{C(\Y')}$. Since $\prescript{\perp_1}{}{C(\Y')}$ is also closed under extensions and summands, we get that $$\Eb(C(\X'),C(\Y'))=0.$$

    We next show that $\Sigma^{-1}C(\X')\subseteq C(\X')$. Note that it is enough to show that $\Sigma^{-1}\X'\subseteq C(\X')$ and $\Sigma^{-1}\K^{\geq 0}(\proj\La)\subseteq C(\X')$. Clearly, $$\Sigma^{-1}\K^{\geq 0}(\proj\La)=\K^{\geq 1}(\proj\La)\subseteq\K^{\geq 0}(\proj\La)\subseteq C(\X').$$ Let $X\in \X'$. We have the following triangle in $\K^b(\proj\La)$. $$t_{\geq 1}\Sigma^{-1}X\to \Sigma^{-1}X\to t_{\leq 0}\Sigma^{-1}X\to t_{\geq 0}X\to X$$
    Since $X\in \K^{[-d+1,0]}(\proj\La)$, $t_{\geq 0}X\in \prescript{\perp_1}{}{\Y'}=\X'$. Since $\X'$ is closed under cocones, we get that $t_{\leq 0}\Sigma^{-1}X\in\X'$. Since $\Sigma^{-1}X$ is an extension of $t_{\geq 1}\Sigma^{-1}X\in \K^{\geq 0}(\proj\La)$ and $t_{\leq 0}\Sigma^{-1}X\in\X'$, we get that $\Sigma^{-1}X\in C(\X')$. Thus $\Sigma^{-1}C(\X')\subseteq C(\X')$. Dually, we can show that $\Sigma C(\Y')\subseteq C(\Y')$.

    Finally, let $C\in \K^b(\proj\La)$. Set $C':= t_{\geq -d+1}C$ and $C'':=t_{\leq 0}C'$. Since $t_{\leq 0}t_{\geq -d+1}C\in \K^{[-d+1,0]}(\proj\La)$, using the completeness of the cotorsion pair $(\X',\Y')$, we get that there exists a triangle $Y\to X\xrightarrow{f} t_{\leq 0}t_{\geq -d+1}C$ with $Y\in\Y'$ and $X\in\X'$. 

    Now, using the octahedral axiom for the triangles $X\xrightarrow{f} t_{\leq 0}t_{\geq -d+1}C\to\Sigma Y$, $t_{\leq 0}t_{\geq -d+1}C=t_{\geq -d+1}t_{\leq 0}C\xrightarrow{j} t_{\leq 0}C\to t_{\leq -d}t_{\leq 0}C=t_{\leq -d}C$, and $X\xrightarrow{j\circ f} t_{\leq 0}C\to C(j\circ f)$, we get a triangle $\Sigma Y\to C(j\circ f)\to t_{\leq -d}C$. Since $\Sigma Y, t_{\leq -d}C\in \Sigma C(\Y')$, we get that $C(j\circ f)\in\Sigma C(\Y')$. 

    \begin{center}
    \begin{tikzcd}
    X \arrow[r, "f"] \arrow[d, Rightarrow, no head] & t_{\leq 0}t_{\geq -d+1}C \arrow[r] \arrow[d, "j"] & \Sigma Y \arrow[d]    \\ X \arrow[r, "j\circ f"']& t_{\leq 0}C \arrow[d] \arrow[r, "l'"'] & C(j\circ f) \arrow[d] \\& t_{\leq -d}C \arrow[r, Rightarrow, no head]       & t_{\leq -d}C         
    \end{tikzcd}
        %\begin{tikzcd}
        %&  & C(j\circ f) \arrow[lldd, "{[1]}"] \arrow[rrd, dashed] &  & \\
        %\Sigma Y \arrow[d, "{[1]}"'] \arrow[rru, dashed] &  &  &  & t_{\leq -d}C \arrow[lldd, "{[1]}"] \arrow[llll, "{[1]}"] \\
        %X \arrow[rrd, "f"'] \arrow[rrrr, "j\circ f"]     &  &                       &  & t_{\leq 0}C \arrow[u] \arrow[lluu, "l'"'] \\
        %&  & t_{\leq 0}t_{\geq -d+1}C \arrow[lluu] \arrow[rru, "j"']                    &  &           
        %\end{tikzcd}
    \end{center}

    Finally, using the octahedral axiom for the triangles $C\xrightarrow{l} t_{\leq 0}C\to \Sigma t_{\geq 1}C$, $t_{\leq 0}C\xrightarrow{l'} C(j\circ f)\to\Sigma X$, and $C\xrightarrow{l'\circ l}C(j\circ f)\to C(l'\circ l)$, we get a triangle $\Sigma t_{\geq 1}C'\to C(l'\circ l)\to \Sigma X$. 

    \begin{center}
    \begin{tikzcd}
    C \arrow[r, "l"] \arrow[d, no head, Rightarrow] & t_{\leq 0}C \arrow[r] \arrow[d, "l'"]   & \Sigma t_{\geq 1}C \arrow[d] \\
    C \arrow[r, "l'\circ l"']                       & C(j\circ f) \arrow[d] \arrow[r]         & C(l'\circ l) \arrow[d]       \\
    & \Sigma X \arrow[r, no head, Rightarrow] & \Sigma X        \end{tikzcd}
        %\begin{tikzcd}
        %&  & C(l'\circ l) \arrow[lldd, "{[1]}"] \arrow[rrd, dashed] &  &\\
        %\Sigma t_{\geq 1}C \arrow[d, "{[1]}"'] \arrow[rru, dashed] &  &              &  & \Sigma X \arrow[lldd, "{[1]}"] \arrow[llll, "{[1]}"] \\
        %C \arrow[rrd, "l"'] \arrow[rrrr, "l'\circ l"]&  &    &  & C(j\circ f) \arrow[u] \arrow[lluu]\\
        %&  & t_{\leq 0}C \arrow[lluu] \arrow[rru, "l'"']           &  &         \end{tikzcd}
    \end{center}
    
    Since $\Sigma t_{\geq 1}C, \Sigma X\in\Sigma C(\X')$, we get that $C(l'\circ l)\in\Sigma C(\X')$. Thus, we have a triangle $\Sigma^{-1}C(l'\circ l)\to C\to C(j\circ f)$ with $\Sigma^{-1}C(l'\circ l)\in C(\X')$ and $C(j\circ f)\in\Sigma C(\Y')$. Therefore, $(C(\X'),C(\Y'))$ is a co-$t$-structure. Moreover, since $\K^{\geq 0}(\proj\La)\subseteq C(\X')$, we get that $\bigcup_{i\in\mathbb{Z}}\Sigma^iC(\X')\supseteq \bigcup_{i\in\mathbb{Z}}\Sigma^i\K^{\geq 0}(\proj\La)=\K^b(\proj\La)$. Similarly, $\bigcup_{i\in\mathbb{Z}}\Sigma^iC(\Y')=\K^b(\proj\La)$. Hence, $(C(\X'),C(\Y'))$ is a bounded co-$t$-structure on $\K^b(\proj\La)$. Moreover, the co-heart $C(\X')\cap C(\Y')$ is contained in $\K^{[-d+1,0]}(\proj\La)$, and hence its additive generator is a $d$-term silting object.

    Note that if $\Y\subseteq\Y'\in \chcotors\K^{[-d+1,0]}(\proj\La)$, then $C(\Y)\subseteq C(\Y')$, which implies that $\Theta^{-1}(C(\prescript{\perp_1}{}{\Y}), C(\Y))\leq \Theta^{-1}(C(\prescript{\perp_1}{}{\Y'}), C(\Y'))$. Thus, $\chi$ is a poset homomorphism.

    Since $\X'\subseteq C(\X')\cap \K^{[-d+1,0]}(\proj\La)$ and $\Y'\subseteq C(\Y')\cap \K^{[-d+1,0]}(\proj\La)$, we get that $\Y'=(\X')^{\perp_1}\supseteq C(\X')^{\perp_1}\cap \K^{[-d+1,0]}(\proj\La)=C(\Y')\cap \K^{[-d+1,0]}(\proj\La)$. The last equality follows from the fact that for a co-$t$-structure $(\mathcal A,\mathcal B)$, $\mathcal{A}^{\perp_1}=\mathcal{B}$ and $\mathcal{A}=\prescript{\perp_1}{}{\mathcal{B}}$ \cite[Proposition~1.6]{P}. Thus, $\Y'= C(\Y')\cap \K^{[-d+1,0]}(\proj\La)$. Similarly, $\X'= C(\X')\cap \K^{[-d+1,0]}(\proj\La)$. Thus, $\psi\circ \chi=\mbox{Id}$. This implies that $\psi\circ \chi\circ \psi=\psi$. Since $\psi$ is injective by Lemma \ref{partiallemma}, we get that $\chi\circ \psi=\mbox{Id}$. Thus, $\chi$ is the inverse of $\psi$.
    \end{proof}

\subsection{Definition of \texorpdfstring{$\psi'$}{psi'}}
\begin{definition}\label{psi2}
    Let $M$ be a $d$-term silting object in $\K^b(\proj\La)$. Define 
    \begin{align*}
    \U_M&=\{N\in \D^b(\mmod\La)\mid \Hom(M, \Sigma^m N)=0,\ \forall \ m>0\},\\
    \V_M&=\{N\in \D^b(\mmod\La)\mid \Hom(M, \Sigma^m N)=0,\ \forall \ m<0\}.
\end{align*}
Set $\psi'(M):=(\U_M\cap \D^{[-d+2,0]}(\mmod\La),\Sigma^{-1}\V_M\cap \D^{[-d+2,0]}(\mmod\La))$.
\end{definition}
Using \cite[Theorem~6.1]{KY} we know that the poset of equivalence classes of silting objects in $\K^b(\proj \La)$ is isomorphic to the poset of bounded $t$-structures with length heart on $\D^b(\mmod\La)$ under the map $M\mapsto (\U_M, \Sigma^{-1}\V_M)$. Set $\U'_M:=\U_M\cap \D^{[-d+2,0]}(\mmod\La)$ and $\V'_M=\Sigma^{-1}\V_M\cap \D^{[-d+2,0]}(\mmod\La)$.
\begin{lemma}\label{stors}
 The map $\psi':d\mbox{-}\silt\La\to \ptors\D^{[-d+2,0]}(\mmod\La)$ given by $M\mapsto \U'_M$ is a poset homomorphism, with the image contained in the set of $s$-torsion classes. In particular, the image is contained in the set of contravariantly finite positive torsion classes. 
\end{lemma}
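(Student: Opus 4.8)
The plan is to realise $\psi'(M)=\U'_M$ as the torsion class attached to the $t$-structure $(\U_M,\Sigma^{-1}\V_M)$ of \cite{KY} after \emph{restricting} it to the extension-closed subcategory $\D^{[-d+2,0]}(\mmod\La)$, and then to check directly that the resulting pair
\[
(\U'_M,\V'_M):=\bigl(\U_M\cap\D^{[-d+2,0]}(\mmod\La),\ \Sigma^{-1}\V_M\cap\D^{[-d+2,0]}(\mmod\La)\bigr)
\]
is an $s$-torsion pair in the sense of Definition \ref{torclass}(3); all remaining assertions follow formally from that.

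The main step is to locate the $t$-structure. I claim that for a $d$-term silting object $M$ one has $\D^{\leq -d+1}(\mmod\La)\subseteq\U_M\subseteq\D^{\leq 0}(\mmod\La)$, and dually (by passing to right orthogonals inside the $t$-structure) $\D^{\geq 1}(\mmod\La)\subseteq\Sigma^{-1}\V_M\subseteq\D^{\geq -d+2}(\mmod\La)$. The two extreme $d$-term silting objects here are $\La$ and $\Sigma^{d-1}\La$, and a direct computation from the definition of $\U_{(-)}$ gives $\U_\La=\D^{\leq 0}(\mmod\La)$ and $\U_{\Sigma^{d-1}\La}=\D^{\leq -d+1}(\mmod\La)$. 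Moreover every $d$-term silting $M$ satisfies $\Sigma^{d-1}\La\leq M\leq\La$ in $\silt\La$: the inequality $M\leq\La$ just reads $\Ho^{>0}(M)=0$, and $\Sigma^{d-1}\La\leq M$ holds because for $i>0$ the complex $\La[i+d-1]$ is concentrated in a degree $<-d+1$, so there is no nonzero chain map $M\to\La[i+d-1]$ and hence $\Hom_{\D^b(\mmod\La)}(M,\Sigma^{i}\Sigma^{d-1}\La)=\Hom_{\K^b(\proj\La)}(M,\La[i+d-1])=0$ (using that $\K^b(\proj\La)\hookrightarrow\D^b(\mmod\La)$ is fully faithful). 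Since $M\mapsto\U_M$ is a poset isomorphism onto the bounded $t$-structures with length heart \cite[Theorem~6.1]{KY} and $\D^{\leq -d+1}(\mmod\La)\subsetneq\D^{\leq 0}(\mmod\La)$, these two facts pin down the claimed inclusions and fix the direction of monotonicity of $M\mapsto\U_M$.

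Next I would show that $(\U_M,\Sigma^{-1}\V_M)$ restricts to $\D^{[-d+2,0]}(\mmod\La)$: for $C$ in this subcategory, its truncation triangle $T_C\to C\to F_C$ (with $T_C\in\U_M$ and $F_C\in\Sigma^{-1}\V_M$) has $T_C,F_C\in\D^{[-d+2,0]}(\mmod\La)$. This is a short chase in the long exact sequence of ordinary cohomology, using the bounds above: $T_C\in\U_M\subseteq\D^{\leq 0}$ forces $\Ho^{>0}$ to vanish on $T_C$ and $\Sigma T_C$, $F_C\in\Sigma^{-1}\V_M\subseteq\D^{\geq -d+2}$ forces $\Ho^{<-d+2}$ to vanish on $F_C$, and $\Ho^{>0}(C)=\Ho^{<-d+2}(C)=0$; reading off the sequence gives $T_C,F_C\in\D^{\geq -d+2}\cap\D^{\leq 0}=\D^{[-d+2,0]}(\mmod\La)$. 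As $\D^{[-d+2,0]}(\mmod\La)$ is extension closed in $\D^b(\mmod\La)$, this triangle is then a conflation there. The $s$-torsion axioms for $(\U'_M,\V'_M)$ now follow: (Sa) $\Hom(\U'_M,\V'_M)\subseteq\Hom(\U_M,\Sigma^{-1}\V_M)=0$ by the $t$-structure; (Sc) is precisely the restricted triangle $T_C\to C\to F_C$, noting $F_C\in\Sigma^{-1}\V_M\cap\D^{[-d+2,0]}=\V'_M$; and (Sb) $G^{-1}(\U'_M,\V'_M)=\Hom(\U'_M,\Sigma^{-1}\V'_M)\subseteq\Hom(\U_M,\Sigma^{-1}(\Sigma^{-1}\V_M))\subseteq\Hom(\U_M,\Sigma^{-1}\V_M)=0$, the middle inclusion because $\Sigma^{-1}\V_M$ is a co-aisle and hence stable under $\Sigma^{-1}$. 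Thus $(\U'_M,\V'_M)$ is an $s$-torsion pair, so in particular a torsion pair by the remark following Definition \ref{torclass}; hence $\psi'(M)=\U'_M$ is an $s$-torsion class, it is positive by Corollary \ref{pconts}, and it is contravariantly finite since (Sc) exhibits $T_C\to C$ as a right $\U'_M$-approximation of every $C$ (any morphism from $\U'_M$ into $C$, composed with $C\to F_C$, lies in $\Hom(\U'_M,\V'_M)=0$). Finally $\psi'$ is monotone: if $M\leq N$ in $d\mbox{-}\silt\La$ then $\U_M\subseteq\U_N$, whence $\U'_M\subseteq\U'_N$.

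The only genuinely non-formal ingredient is the second paragraph — correctly squeezing $(\U_M,\Sigma^{-1}\V_M)$ between the two shifted standard $t$-structures. Once those bounds are in place, the cohomological chase showing the $t$-structure restricts and the verification of (Sa)--(Sc) are routine. One could instead try to establish $\D^{\leq -d+1}(\mmod\La)\subseteq\U_M\subseteq\D^{\leq 0}(\mmod\La)$ directly from $\thick M=\K^b(\proj\La)$ together with the concentration of $M$ in degrees $[-d+1,0]$, but the argument via the interval $[\Sigma^{d-1}\La,\La]$ in $\silt\La$ and the monotonicity of the \cite{KY} bijection is shorter.
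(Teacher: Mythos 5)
Your proposal is correct and follows essentially the same route as the paper: restrict the Koenig--Yang $t$-structure $(\U_M,\Sigma^{-1}\V_M)$ to $\D^{[-d+2,0]}(\mmod\La)$ and verify the $s$-torsion axioms (Sa)--(Sc), the key input in both cases being the inclusions $\D^{\leq -d+1}(\mmod\La)\subseteq\U_M$ and $\D^{\geq 1}(\mmod\La)\subseteq\Sigma^{-1}\V_M$. The only differences are in execution: you obtain these bounds from the interval $\Sigma^{d-1}\La\leq M\leq\La$ together with monotonicity of the silting--$t$-structure bijection (the paper computes them directly from the fact that $M$ is concentrated in degrees $[-d+1,0]$), you show that the truncation triangle stays in $\D^{[-d+2,0]}(\mmod\La)$ by a long exact sequence in cohomology rather than the paper's splitting argument, and you prove contravariant finiteness directly from (Sc) where the paper cites \cite{AET}.
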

\begin{proof}
We start by showing that for $M\in d\mbox{-}\silt\La$, $(\U'_M,\V'_M)$ is an $s$-torsion pair in $\D^{[-d+2,0]}(\mmod\La)$.

Since $(\U_M, \Sigma^{-1}\V_M)$ is a $t$-structure in $\D^b(\mmod\La)$, $\Hom(\U'_M, \V'_M)=0$. Moreover, since $\U_M$ is closed under positive shifts, $\Eb^{-1}(\U'_M, \V'_M)=0$.

In order to prove condition (Sc), we claim that $\D^{\leq (-d+1)}(\mmod\La)\subseteq\U_M$. This is because if $N\in \D^{\leq (-d+1)}(\mmod\La)$, then $N\cong \tau_{\leq -d+1}N$, which implies that
\begin{equation*}
    \begin{split}
        \Hom_{\D^b(\mmod\La)}(\Sigma^{-m}M, N)&\cong \Hom_{\D^b(\mmod\La)}(\Sigma^{-m}M, \tau_{\leq -d+1}N)\\
        &\cong \Hom_{K(\mmod\La)}(\Sigma^{-m}M, \tau_{\leq -d+1}N)\\ &=0
    \end{split}
\end{equation*}
for all $m>0$, since $M$ is a complex of projectives concentrated in $[-d+1, 0]$. Dually, we have that $\D^{\geq 1}(\mmod\La)\subseteq \Sigma^{-1}\V_M$.
%Our next claim is that $\D^{\geq 1}(\mmod\La)\subseteq \Sigma^{-1}\V_M$. Note that this is equivalent to showing that $\D^{\geq 0}(\mmod\La)\subseteq\V_M$. By definition of $\V_M$, we need to prove that $\Hom(\Sigma^{-m}M, \D^{\geq 0}(\mmod\La))=0$ for all $m<0$. Let $N\in \D^{\geq 0}(\mmod\La)$. Then $N\cong \tau^{\geq 0}N$, which implies that $\Hom_{\D^b(\mmod\La)}(\Sigma^{-m}M, N)\cong \Hom_{\D^b(\mmod\La)}(\Sigma^{-m}M, \tau^{\geq 0}N)\cong \Hom_{K(\mmod\La)}(\Sigma^{-m}M, \tau^{\geq 0}N)=0$ for all $m<0$, since $M$ is a complex of projectives concentrated in $[-d+1, 0]$.

Finally, let $Z\in \D^{[-d+2,0]}(\mmod\La)$. Since $(\U_M, \Sigma^{-1}\V_M)$ is a $t$-structure in $\D^b(\mmod\La)$, there exists a triangle $$U\xrightarrow{u}Z\xrightarrow{v}V$$ with $U\in\U_M$ and $V\in\Sigma^{-1}\V_M$. Note that we have a triangle $$\tau_{\leq 0}U\to U\to \tau_{\geq 1}U.$$ Using the previous paragraph, we know that $\tau_{\geq 1}U\in \Sigma^{-1}\V_M$, which implies that the map from $U\to \tau_{\geq 1}U$ is~$0$. Thus, $\tau_{\leq 0}U\cong U\oplus \Sigma^{-1}\tau_{\geq 1}U$. This implies that $\tau_{\geq 1}U\cong 0$ and $\tau_{\leq 0}U\cong U$. Thus $U\in \D^{\leq 0}(\mmod\La)$. Using the triangle $Z\to V\to \Sigma U$ and the fact that $\D^{\leq 0}(\mmod\La)$ is closed under extensions, we get that $V\in \D^{\leq 0}(\mmod\La)$. Similarly, we have a triangle $$\tau_{\leq -d+1}V\to V\to \tau_{\geq -d+2}V.$$ Since $\tau_{\leq -d+1}V\in \U_M$, we get that the map from $\tau_{\leq -d+1}V\to V$ is~$0$, which implies that $\tau_{\geq -d+2}V\cong V\oplus \Sigma\tau_{\leq -d+1}V$. This gives that $\tau_{\leq -d+1}V=0$ and $\tau_{\geq -d+2}V\cong V$. Thus, $V\in \D^{[-d+2,0]}(\mmod\La)$. Using the triangle $\Sigma^{-1}V\to U\to Z$ and the fact that $\D^{\geq -d+2}(\mmod\La)$ is closed under extensions, we get that $U\in \D^{[-d+2,0]}(\mmod\La)$. Hence, $U\in \U'_M$ and $V\in \V'_M$. Thus, $(\U_M',\V_M')$ is an $s$-torsion pair. Using \cite{AET} and Corollary \ref{pconts} we know that $s$-torsion classes are positive and contravariantly finite. Thus, the image of $\psi'$ is contained in the set of contravariantly finite positive torsion classes.

Now, suppose $M\leq N$. Since $M\mapsto (\U_M, \Sigma^{-1}V_M)$ is a map of posets, $\U_M\subseteq \U_N$. And hence $\U'_M\subseteq \U'_N$. Thus, $\psi'$ is a poset homomorphism. 
\end{proof}
We now have the following triangle of maps.
\begin{equation}\label{silt-tors-cotors}
\begin{tikzcd}
d\mbox{-}\silt\La \arrow[rr, "\psi"] \arrow[rrd, "\psi'"'] &  & \chcotors \K^{[-d+1,0]}(\proj\La)\arrow[d, "\Phi"] \\ &  & \ptors \D^{[-d+2,0]}(\mmod\La)                     
\end{tikzcd}
\end{equation}
\begin{proposition}\label{commute}
    The above triangle commutes.
\end{proposition}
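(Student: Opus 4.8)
Fix $M\in d\mbox{-}\silt\La$. The statement to prove is that $\Phi(\psi(M))=\psi'(M)$, i.e.\ that $\tau_{\geq -d+2}\Y'_M=\U'_M$, where I write $\Y'_M:=\Y_M\cap\K^{[-d+1,0]}(\proj\La)$ and $\U'_M:=\U_M\cap\D^{[-d+2,0]}(\mmod\La)$. The plan is to produce a single $\Hom$-vanishing condition describing both $\Y'_M$ and the $\tau_{\geq -d+2}$-preimage of $\U'_M$, and then transport it across the truncation triangle.

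First I would rewrite $\Y_M$ in orthogonality terms. Since $(\X_M,\Y_M)$ is a bounded co-$t$-structure on $\K^b(\proj\La)$ by \cite[Theorem~6.1]{KY}, \cite[Proposition~1.6]{P} gives $\Y_M=\X_M^{\perp_1}$; because $\X_M$ is generated under extensions and summands by $\{\Sigma^mM\mid m\leq 0\}$ and $\Eb(-,Z)=\Hom(-,\Sigma Z)$ is cohomological, this says exactly that a $Z\in\K^b(\proj\La)$ lies in $\Y_M$ if and only if $\Hom(M,\Sigma^kZ)=0$ for all $k>0$. In particular $\Y'_M=\{Y\in\K^{[-d+1,0]}(\proj\La)\mid\Hom(M,\Sigma^kY)=0\ \forall\,k>0\}$, which is cut out by the same condition as $\U'_M$ but inside a different ambient category. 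Next I would record the degree estimate that makes this rigidity usable: since $M$ is isomorphic to a complex of projectives concentrated in degrees $\{-d+1,\dots,0\}$, any chain map from $M$ to a right-bounded complex of projectives concentrated in degrees $\leq -d$ vanishes componentwise, so $\Hom_{\D^b(\mmod\La)}(M,N)=0$ for every $N\in\D^{\leq -d}(\mmod\La)$.

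The central step is the comparison. For $Y\in\K^{[-d+1,0]}(\proj\La)$ I would apply $\Hom(M,\Sigma^m-)$ to the truncation triangle $\tau_{\leq -d+1}Y\to Y\to\tau_{\geq -d+2}Y\to\Sigma\tau_{\leq -d+1}Y$; since $\tau_{\leq -d+1}Y\cong\Ho^{-d+1}(Y)[d-1]$ has cohomology concentrated in degree $-d+1$, both $\Sigma^m\tau_{\leq -d+1}Y$ and $\Sigma^{m+1}\tau_{\leq -d+1}Y$ lie in $\D^{\leq -d}(\mmod\La)$ once $m\geq 1$, so the degree estimate kills the two outer terms of the resulting long exact sequence and yields $\Hom(M,\Sigma^mY)\cong\Hom(M,\Sigma^m\tau_{\geq -d+2}Y)$ for all $m>0$. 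Together with the orthogonality description and the definition of $\U_M$, this gives, for $Y\in\K^{[-d+1,0]}(\proj\La)$, the equivalence $Y\in\Y'_M\iff\tau_{\geq -d+2}Y\in\U'_M$ (the ambient-category clause is automatic because $\tau_{\geq -d+2}Y$ always lands in $\D^{[-d+2,0]}(\mmod\La)$). The inclusion $\tau_{\geq -d+2}\Y'_M\subseteq\U'_M$ is then immediate, and for the reverse one I would, given $N\in\U'_M$, use essential surjectivity of $\tau_{\geq -d+2}$ (Proposition~\ref{equiv}) to pick $Y\in\K^{[-d+1,0]}(\proj\La)$ with $\tau_{\geq -d+2}Y\cong N$, whence $Y\in\Y'_M$ and $N\in\tau_{\geq -d+2}\Y'_M$. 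This proves $\tau_{\geq -d+2}\Y'_M=\U'_M$, i.e.\ $\Phi\circ\psi=\psi'$. The one spot needing genuine care is the degree bookkeeping in this last step — checking that every shift $\Sigma^m\tau_{\leq -d+1}Y$ with $m\geq 1$ truly sits in $\D^{\leq -d}(\mmod\La)$, which is precisely where the hypothesis that $M$ is $d$-term is used — together with correctly importing $\Y_M=\X_M^{\perp_1}$ from co-$t$-structure theory; both are routine but are the load-bearing points.
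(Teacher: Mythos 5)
Your argument is correct, and it is organized rather differently from the paper's. Your central move is to turn both sides of the desired equality into a single $\Hom$-vanishing condition: $\Y_M=\X_M^{\perp_1}=\{Z\mid \Hom(M,\Sigma^kZ)=0\ \forall k>0\}$ (using that $\prescript{\perp_1}{}{Z}$ is closed under extensions and summands, so it suffices to test against the generators $\Sigma^mM$, $m\leq 0$), and then to transport this condition across the truncation triangle via the isomorphism $\Hom(M,\Sigma^mY)\cong\Hom(M,\Sigma^m\tau_{\geq -d+2}Y)$ for $m>0$, which follows from $\Hom(M,\D^{\leq -d}(\mmod\La))=0$ since $\tau_{\leq -d+1}Y$ is concentrated in cohomological degree $-d+1$. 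This yields the pointwise equivalence $Y\in\Y'_M\iff\tau_{\geq -d+2}Y\in\U'_M$, from which both inclusions follow at once (the reverse one via essential surjectivity of $\tau_{\geq -d+2}$). The paper proves the two inclusions separately and less symmetrically: for $\U'_M\subseteq\tau_{\geq -d+2}\Y'_M$ it lifts $N$ to a complex of projectives, hard-truncates, and shows $t_{\geq -d+1}N\in\X_M^{\perp_1}$ by the same generator-testing argument you use; for the converse it first establishes $\Y'_M=t_{\geq -d+1}\Y_M$, checks that the objects $\tau_{\geq -d+1}\Sigma^jM$ ($j\geq 0$) lie in $\U'_M$, and then verifies that the subcategory $S=\{X\mid\Hom(M,\Sigma^m\tau_{\geq -d+2}X)=0\ \forall m>0\}$ is closed under extensions and summands so that it contains all of $\Y_M$. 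The underlying ingredients (co-$t$-structure orthogonality, the degree estimate coming from $M$ being $d$-term, truncation triangles) are identical, but your formulation is more economical because the orthogonal characterization of $\Y_M$ is extracted once and for all rather than re-derived on each side; the paper's version, in exchange, stays closer to the generation-by-shifts description of $\X_M$ and $\Y_M$ that is used elsewhere in the section.
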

\begin{proof}
    We need to show that for all $M\in d\mbox{-}\silt\La$, $\U_M'=\tau_{\geq -d+2}\Y_M'$. 

    Let $N\in \U_M'$ and $\cdots\to N^{-2}\to N^{-1}\to N^0\to 0\in\K^b(\proj\La)$ quasi-isomorphic to $N$. Then $\tau_{\geq -d+2}(N^{-d+1}\to\cdots\to N^0)\cong N$. We claim that $N^{-d+1}\to\cdots\to N^0\in \Y_M$. 

    Consider the triangle $t_{\geq -d+1}N\to N\to t_{\leq -d}N$. Let $j\leq 0$. Then $\Sigma^jM\in \X_M$. We have the following exact sequence.
    $$\Hom(\Sigma^jM, t_{\leq -d}N)\to \Eb(\Sigma^jM,t_{\geq -d+1}N)\to \Eb(\Sigma^jM, N)$$
    Since $N\in \U_M'$, $\Eb(\Sigma^jM, N)\cong \Hom(M, \Sigma^{-j+1}N)=0$. Moreover, since $M$ is concentrated in degrees $-d+1,\cdots, 0$, $\Hom(\Sigma^jM, t_{\leq -d}N)=0$. Therefore, $$\Eb(\Sigma^jM,t_{\geq -d+1}N)=0.$$ This implies that $\Sigma^jM\in \prescript{\perp_1}{}{(t_{\geq -d+1}N)}$ for all $j\leq 0$. Since, $\prescript{\perp_1}{}{(t_{\geq -d+1}N)}$ is closed under extensions and summands, we get that $\X_M\subseteq \prescript{\perp_1}{}{(t_{\geq -d+1}N)}$. Thus, $t_{\geq -d+1}N\in \X_M^{\perp_1}=\Y_M$. Thus, $\U_M'\subseteq \tau_{\geq -d+2}\Y_M'$.

    For the other inclusion, we first claim that $\Y_M'=t_{\geq -d+1}\Y_M$. It is enough to show that $\Y_M$ is closed under $t_{\geq -d+1}$. Let $Y\in\Y_M$. Consider the triangle $t_{\geq -d+1}Y\to Y\to t_{\leq -d}Y$. Since $\X_M$ is contained in $\K^{\geq -d+1}(\proj\La)$, we get that for all $X\in\X_M$, $\Eb(X, t_{\geq -d+1}Y)=0$, which implies that $t_{\geq -d+1}Y\in \X_M^{\perp_1}=\Y_M$.

    Thus, $\tau_{\geq -d+2}\Y_M'=\tau_{\geq -d+2}t_{\geq -d+1}\Y_M=\tau_{\geq -d+2}\Y_M$.

    Let $m>0$ and $j\geq 0$. Using the triangle $\tau_{\leq -d+1}\Sigma^j M\to \Sigma^j M\to \tau_{\geq -d+1}\Sigma^j M$, we get the exact sequence $$\Hom(\Sigma^{-m}M,\Sigma^jM)\to \Hom(\Sigma^{-m}M, \tau_{\geq -d+1}\Sigma^j M)\to \Eb(\Sigma^{-m}M, \tau_{\leq -d+1}\Sigma^j M).$$ Since $M$ is a silting object, $\Hom(\Sigma^{-m}M,\Sigma^jM)=0$. Moreover, since $M$ is concentrated in degrees $-d+1, \cdots, 0$ as a complex of projectives, $\Eb(\Sigma^{-m}M, \tau_{\leq -d+1}\Sigma^j M)=0$. Thus, $\Hom(M, \Sigma^{m}\tau_{\geq -d+1}\Sigma^j M)=0$ for all $m>0$, and $\tau_{\geq -d+1}\Sigma^j M\in\U_M'$ for all $j\geq 0$. 

    Finally, to show that $\tau_{\geq -d+2}\Y_M\subseteq\U_M'$, it is enough to show that the category $$S:=\{X\in \K^b(\proj\La)\mid \Hom(M, \Sigma^m\tau_{\geq -d+2}X)=0 \ \forall\ m>0\}$$ is closed under extensions and summands.

    Clearly, it is closed under summands. Let $X\xrightarrow{f} Z\to Y$ be a triangle in $\K^b(\proj\La)$ with $X,Y\in S$. Set $C=C(\tau_{\geq -d+2}f)$. Then $\tau_{\geq -d+2}C\cong \tau_{\geq -d+2}Y$. Let $m>0$. Using the triangle $\tau_{\leq -d+1}C\to C\to \tau_{\geq -d+2}C$, we get the exact sequence $$\Hom(M, \Sigma^m\tau_{\leq -d+1}C)\to \Hom(M, \Sigma^mC)\to \Hom(M, \Sigma^m\tau_{\geq -d+2}C),$$ such that $\Hom(M, \Sigma^m\tau_{\leq -d+1}C)=\Hom(M, \Sigma^m\tau_{\geq -d+2}C)=0$. Thus, $$\Hom(\Sigma^{-m}M, C)\cong\Hom(M, \Sigma^mC)=0.$$ Using the triangle $\tau_{\geq -d+2}X\to \tau_{\geq -d+2}Z\to C$ and the fact that $(\Sigma^{-m}M)^\perp$ is closed under extensions, we get that $\Hom(M, \Sigma^m\tau_{\geq -d+2}Z)=0$ for all $m>0$. Thus, $Z\in S$.
    \end{proof}

    \subsection{Proof of Theorem \ref{main}}
    \begin{proof}
        Using Lemma \ref{stors}, we know that the image of $\psi'$ is contained in the class of contravariantly finite torsion pairs. Using Proposition \ref{commute}, this image is also contained in the image of $\Phi$, which by Theorem \ref{tors-cotors} is contained in the class of covariantly finite torsion pairs. Thus, we get that the following commutative triangle is well-defined. 
        \begin{equation*}
    \begin{tikzcd}
    d\mbox{-}\silt\La \arrow[rr, "\psi"] \arrow[rrd, "\psi'"'] &  & \chcotors \K^{[-d+1,0]}(\proj\La)\arrow[d, "\Phi"] \\ &  & \fptors \D^{[-d+2,0]}(\mmod\La)                     
    \end{tikzcd}
    \end{equation*}
    Using Theorem \ref{bijections}, we know that the inverse $\Psi:\tors \D^{[-d+2,0]}(\mmod\La)\to \cotors\K^{[-d+1,0]}(\proj\La)$ of $\Phi$ sends functorially finite torsion pairs to complete hereditary cotorsion pairs. Hence, $$\Phi:\chcotors \K^{[-d+1,0]}(\proj\La)\to \fptors \D^{[-d+2,0]}(\mmod\La)$$ is a poset isomorphism. 

    Thus, we conclude that all the maps in triangle (\ref{res-diag}) are isomorphisms of posets.
    \end{proof}
    
    \begin{corollary}
        Let $(\T,\F)$ be a positive torsion pair in $\D^{[-d+2,0]}(\mmod\La)$. Then $\T$ is functorially finite if and only if $\F$ is.
    \end{corollary}
    \begin{proof}
    Using Theorem \ref{main} and its dual, we know that $\T$ is functorially finite if and only if it $\T=\psi'(P)$ for $P$ a $d$-term silting complex, which is true if and only if $\F=(\psi'(P))^\perp$ is functorially finite.  
    \end{proof}

    \begin{proposition}
       The class of functorially finite positive torsion classes in \linebreak $\D^{[-d+2,0]}(\mmod\La)$ coincides with the class of functorially finite $s$-torsion classes in $\D^{[-d+2,0]}(\mmod\La)$.
    \end{proposition}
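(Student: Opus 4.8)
\end{proposition}
\begin{proof}
The plan is to deduce this directly from the correspondences already in place, with no essentially new argument. One inclusion is immediate: if $\T$ is a functorially finite $s$-torsion class in $\D^{[-d+2,0]}(\mmod\La)$, then by Corollary \ref{pconts} it is a positive torsion class, and it is functorially finite by hypothesis, so it is a functorially finite positive torsion class.

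For the reverse inclusion I would argue through the silting picture. Let $\T\in\fptors\D^{[-d+2,0]}(\mmod\La)$. By Theorem \ref{main} the composite $\psi'=\Phi\circ\psi$ (see Proposition \ref{commute}) is a poset isomorphism from $d\mbox{-}\silt\La$ onto $\fptors\D^{[-d+2,0]}(\mmod\La)$; in particular it is surjective, so $\T=\psi'(M)$ for some $d$-term silting object $M$. By Lemma \ref{stors} the torsion class $\psi'(M)$ is an $s$-torsion class, and since $\T$ is functorially finite by assumption it is a functorially finite $s$-torsion class. Combining the two inclusions gives the stated equality.

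An alternative route stays on the cotorsion side: by the proof of Theorem \ref{main} the inverse $\Psi$ of $\Phi$ carries the functorially finite torsion class $\T$ to a complete and hereditary cotorsion class $\Y=\Psi(\T)$, and by Lemma \ref{approximations} $\Y$ is also contravariantly finite because $\T=\tau_{\geq -d+2}\Y$ is; part (3) of Theorem \ref{bijections} then identifies $\T=\Phi(\Y)$ with an $s$-torsion class. In either approach the only point that needs care is that functorial finiteness is transported correctly along these equivalences, which is exactly what Lemma \ref{approximations} and the completeness clauses in Theorems \ref{bijections} and \ref{main} ensure; I do not anticipate any real obstacle.
\end{proof}
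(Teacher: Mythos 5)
Your proof is correct. The easy inclusion via Corollary \ref{pconts} is exactly the paper's, and your ``alternative route'' on the cotorsion side is essentially verbatim the paper's own argument: the paper takes a functorially finite positive $\T$, observes via Theorem \ref{bijections} (together with Lemma \ref{approximations}) that $\Phi^{-1}(\T)$ is hereditary and contravariantly finite, and then invokes part (3) of that theorem to conclude $\T$ is an $s$-torsion class. Your primary route --- using the surjectivity of $\psi'$ onto $\fptors\D^{[-d+2,0]}(\mmod\La)$ from Theorem \ref{main} together with Lemma \ref{stors} --- is a legitimate variant and not circular, since the Proposition is not used in the proof of Theorem \ref{main}; it is arguably slicker, but it buys nothing genuinely new, because Theorem \ref{main} itself is proved by exactly the cotorsion-side transport of finiteness conditions that the paper's direct argument uses. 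Either way, the only point requiring care is the one you flag, namely that contravariant finiteness passes through $\Phi^{-1}$, which is Lemma \ref{approximations}.
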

    \begin{proof}
        Suppose $\T$ is a functorially finite positive torsion class. Then, using Theorem \ref{bijections}, $\Phi^{-1}(\T)$ is a hereditary cotorsion class. Since $\T=\tau_{\geq -d+2}(\Phi^{-1}(\T))$ is contravariantly finite, Lemma \ref{approximations} implies that $\Phi^{-1}(\T)$ is contravariantly finite. Then Theorem \ref{bijections} gives that $\T$ is a functorially finite $s$-torsion class. On the other hand, the class of functorially finite $s$-torsion classes is contained in the class of functorially finite positive torsion classes by Corollary \ref{pconts}.
    \end{proof}

    \subsection{Lattices}
    Let $(\C,\Eb,\s)$ be an extriangulated category.
    \begin{theorem}\label{torslattice}
        The poset of torsion pairs in $\C$ is a lattice.
    \end{theorem}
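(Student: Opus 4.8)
The plan is to exhibit $\tors\C$ as the set of fixed points of a closure operator on the poset of full subcategories of $\C$, ordered by inclusion. For a full subcategory $\mathcal S\subseteq\C$ write $\mathcal S^{\perp}:=\{C\in\C\mid\Hom(\mathcal S,C)=0\}$ and ${}^{\perp}\mathcal S:=\{C\in\C\mid\Hom(C,\mathcal S)=0\}$, using only the degree-zero $\Hom$. These two assignments form an antitone Galois connection: $\mathcal S\subseteq\mathcal S'$ implies $\mathcal S'^{\perp}\subseteq\mathcal S^{\perp}$ and ${}^{\perp}\mathcal S'\subseteq{}^{\perp}\mathcal S$, while $\mathcal S\subseteq{}^{\perp}(\mathcal S^{\perp})$ always holds. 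Hence $c:=\big(\mathcal S\mapsto{}^{\perp}(\mathcal S^{\perp})\big)$ is a closure operator. Unwinding Definition \ref{torclass}(1), conditions (a) and (b) say precisely that $\T={}^{\perp}\F$ and $\F=\T^{\perp}$; eliminating $\F$, a full subcategory $\T$ underlies a torsion pair if and only if $\T=c(\T)$, and in that case the second component is forced to be $\F=\T^{\perp}$. So $\tors\C$ is order-isomorphic to the poset of $c$-closed subcategories under inclusion, and it suffices to show the latter is a lattice.

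The key step is that $\tors\C$ is closed under (arbitrary) intersections. Given torsion classes $(\T_i)_{i\in I}$, the inclusion $\bigcap_i\T_i\subseteq c(\bigcap_i\T_i)$ is automatic. For the reverse, fix $j\in I$: from $\bigcap_i\T_i\subseteq\T_j$ and the antitone property of the Galois connection we get $\T_j^{\perp}\subseteq(\bigcap_i\T_i)^{\perp}$ and then $c(\bigcap_i\T_i)={}^{\perp}\big((\bigcap_i\T_i)^{\perp}\big)\subseteq{}^{\perp}(\T_j^{\perp})=\T_j$. Since this holds for every $j$, we obtain $c(\bigcap_i\T_i)\subseteq\bigcap_i\T_i$, so $\bigcap_i\T_i$ is again a torsion class, and it is clearly the greatest lower bound of the family in $\tors\C$. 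Moreover $\C$ itself is a torsion class, since $\C^{\perp}=\{0\}$ (an object orthogonal to all identities is zero) and ${}^{\perp}\{0\}=\C$; thus $\tors\C$ has a top element. It follows that $\tors\C$ is a complete lattice: the meet of a family is its intersection, and its join is $\bigcap\{\U\in\tors\C\mid\T_i\subseteq\U\ \forall i\}$, a non-empty intersection (it contains $\C$) which is a torsion class by the previous step and, by construction, the least torsion class above all the $\T_i$. In particular binary meets and joins exist, so $\tors\C$ is a lattice.

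No step here is genuinely difficult: this is the familiar ``fixed points of a closure operator form a complete lattice'' argument. The only points requiring care are the translation of Definition \ref{torclass}(1) into the single equation $\T=c(\T)$ (checking that the two orthogonality conditions are jointly equivalent to it and that $\F$ is then determined) and keeping straight the direction in which the antitone Galois connection reverses inclusions — it is precisely this reversal, applied to $\bigcap_i\T_i\subseteq\T_j$, that makes the intersection of torsion classes closed. Note also that only the $\Hom$-part of the data is used, so the statement is really a formal consequence of $\big({}^{\perp}(-),\,(-)^{\perp}\big)$ being a Galois connection; in particular it applies verbatim to $\D^{[-d+2,0]}(\mmod\La)$ and to $\tors$ of any extriangulated category.
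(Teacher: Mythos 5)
Your proof is correct. The core computation — showing $\bigcap_i\T_i={}^{\perp}\bigl((\bigcap_i\T_i)^{\perp}\bigr)$ via the antitone property of the orthogonality maps — is exactly the paper's argument for the meet. The only divergence is in the join: the paper obtains it explicitly by dualizing, setting the join of $(\T_1,\F_1)$ and $(\T_2,\F_2)$ to be $\bigl({}^{\perp}(\F_1\cap\F_2),\,\F_1\cap\F_2\bigr)$, which identifies the join's torsion-free class directly; you instead derive the join abstractly as the intersection of all torsion classes above the given ones, which requires your (correct) extra observation that $\C$ itself is a torsion class. Your packaging via the closure operator $c={}^{\perp}((-)^{\perp})$ buys a slightly stronger conclusion — $\tors\C$ is a \emph{complete} lattice, closed under arbitrary meets — at the cost of a less explicit description of the join, while the paper's dual argument is shorter and symmetric in $\T$ and $\F$. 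Both routes are purely formal consequences of the Galois connection, as you note.
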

    \begin{proof}
        Let $(\T_1,\F_1)$, $(\T_2,\F_2)$ be two torsion pairs. To show that their meet exists, it is enough to show that $(\T_1\cap \T_2, (\T_1\cap \T_2)^\perp)$ is a torsion pair, i.e.,  $\T_1\cap\T_2=\prescript{\perp}{}{((\T_1\cap\T_2)^\perp)}$. By definition, $\T_1\cap\T_2\subseteq\prescript{\perp}{}{((\T_1\cap\T_2)^\perp)}$. On the other hand, for $i=1,2$, since $\T_1\cap \T_2\subseteq \T_i$, we get that $\prescript{\perp}{}{((\T_1\cap\T_2)^\perp)}\subseteq \prescript{\perp}{}{(\T_i^\perp)}=\T_i$. Thus, $\prescript{\perp}{}{((\T_1\cap\T_2)^\perp)}\subseteq \T_1\cap \T_2$. Hence $\T_1\cap\T_2=\prescript{\perp}{}{((\T_1\cap\T_2)^\perp)}$.

        Dually, the join of $(\T_1,\F_1)$, $(\T_2,\F_2)$ is the torsion pair $(\prescript{\perp}{}{(\F_1\cap\F_2)},\F_1\cap\F_2)$.
    \end{proof}
    A similar proof can be applied to obtain the corresponding result for cotorsion pairs in $\C$.
    \begin{theorem}\label{cotorslattice}
        The poset of cotorsion pairs in $\C$ is a lattice.
    \end{theorem}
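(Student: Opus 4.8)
The plan is to dualize the proof of Theorem \ref{torslattice} word for word, replacing the Galois connection $(\prescript{\perp}{}{-},(-)^{\perp})$ between full subcategories of $\C$ by the connection $(\prescript{\perp_1}{}{-},(-)^{\perp_1})$ coming from the bifunctor $\Eb$. Recall that a cotorsion pair $(\X,\Y)$ satisfies $\X=\prescript{\perp_1}{}{\Y}$ and $\Y=\X^{\perp_1}$, and that $\cotors\C$ carries the order $(\X_1,\Y_1)\preceq(\X_2,\Y_2)\iff\Y_1\subseteq\Y_2\iff\X_1\supseteq\X_2$. So, given two cotorsion pairs $(\X_1,\Y_1)$ and $(\X_2,\Y_2)$, I will produce their meet as the pair whose cotorsion class is $\Y_1\cap\Y_2$ and their join as the pair whose cotorsion-free class is $\X_1\cap\X_2$.

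First I would check that $(\prescript{\perp_1}{}{(\Y_1\cap\Y_2)},\,\Y_1\cap\Y_2)$ is a cotorsion pair. As in the torsion case, one implication in each of the two defining conditions holds automatically, so it suffices to verify the single identity $(\prescript{\perp_1}{}{(\Y_1\cap\Y_2)})^{\perp_1}=\Y_1\cap\Y_2$. The inclusion $\Y_1\cap\Y_2\subseteq(\prescript{\perp_1}{}{(\Y_1\cap\Y_2)})^{\perp_1}$ is the unit of the Galois connection, and the reverse inclusion follows from the observation that $\Y_1\cap\Y_2\subseteq\Y_i$ forces $\prescript{\perp_1}{}{(\Y_1\cap\Y_2)}\supseteq\prescript{\perp_1}{}{\Y_i}=\X_i$, whence $(\prescript{\perp_1}{}{(\Y_1\cap\Y_2)})^{\perp_1}\subseteq\X_i^{\perp_1}=\Y_i$ for $i=1,2$. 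Since $\Y_1\cap\Y_2$ is the largest full subcategory contained in both $\Y_1$ and $\Y_2$, the order description then makes this pair the greatest lower bound of $(\X_1,\Y_1)$ and $(\X_2,\Y_2)$. The join is handled by the mirror-image computation: $(\X_1\cap\X_2,\,(\X_1\cap\X_2)^{\perp_1})$ is a cotorsion pair because $\prescript{\perp_1}{}{((\X_1\cap\X_2)^{\perp_1})}=\X_1\cap\X_2$, where $\supseteq$ is again the Galois unit and $\subseteq$ follows from $\X_1\cap\X_2\subseteq\X_i$, which gives $(\X_1\cap\X_2)^{\perp_1}\supseteq\Y_i$ and hence $\prescript{\perp_1}{}{((\X_1\cap\X_2)^{\perp_1})}\subseteq\X_i$; and it is the least upper bound since the order on $\cotors\C$ is reverse inclusion of cotorsion-free classes.

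The argument is purely formal and uses nothing about the extriangulated structure beyond the antitone Galois connection $(\prescript{\perp_1}{}{-},(-)^{\perp_1})$, so I do not anticipate a genuine obstacle. The only real subtlety --- and the one place one could slip --- is the bookkeeping of variances: since $\cotors\C$ is ordered by inclusion of the \emph{cotorsion} classes and hence by \emph{reverse} inclusion of the cotorsion-free classes, it is the $\Y$-components that get intersected to form the meet and the $\X$-components that get intersected to form the join, the opposite of the naive guess if one focuses on the $\X$-side. Accordingly I would double-check in each case that the candidate pair really is a lower (respectively upper) bound, which becomes immediate once it is known to be a cotorsion pair.
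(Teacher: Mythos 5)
Your proposal is correct and is essentially the paper's own proof: the join is realised as $(\X_1\cap\X_2,(\X_1\cap\X_2)^{\perp_1})$ and the meet as $(\prescript{\perp_1}{}{(\Y_1\cap\Y_2)},\Y_1\cap\Y_2)$, with exactly the same Galois-connection computation to verify that these are cotorsion pairs. The variance bookkeeping you flag is also handled the same way in the paper, which presents the join explicitly and obtains the meet by duality.
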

    \begin{proof}
        Let $(\X_1,\Y_1)$, $(\X_2,\Y_2)$ be two cotorsion pairs in $\C$. To show that their join exists, it is enough to show that $(\X_1\cap \X_2, (\X_1\cap \X_2)^{\perp_1})$ is a cotorsion pair, i.e.,  $\X_1\cap\X_2=\prescript{\perp_1}{}{((\X_1\cap\X_2)^{\perp_1})}$. By definition, $\X_1\cap\X_2\subseteq\prescript{\perp_1}{}{((\X_1\cap\X_2)^{\perp_1})}$. On the other hand, for $i=1,2$, since $\X_1\cap \X_2\subseteq \X_i$, we get that $\prescript{\perp_1}{}{((\X_1\cap\X_2)^{\perp_1})}\subseteq \prescript{\perp_1}{}{(\X_i^{\perp_1})}=\X_i$. Thus, $\prescript{\perp_1}{}{((\X_1\cap\X_2)^{\perp_1})}\subseteq \X_1\cap \X_2$. Hence $\X_1\cap\X_2=\prescript{\perp_1}{}{((\X_1\cap\X_2)^{\perp_1})}$.

        Dually, the meet of $(\X_1,\Y_1)$, $(\X_2,\Y_2)$ is the cotorsion pair $(\prescript{\perp_1}{}{(\Y_1\cap\Y_2)},\Y_1\cap\Y_2)$.
    \end{proof}
    Moreover, in the case of $\C=\D^{[-d+2,0]}(\mmod\La)$, we get that the subposet of positive torsion pairs in $\C$ is also a lattice.
    \begin{theorem}\label{ptorslattice}
        The subposet $\ptors \D^{[-d+2,0]}(\mmod\La)$ of $\tors\D^{[-d+2,0]}(\mmod\La)$ is closed under meets and joins, and hence, a lattice. 
    \end{theorem}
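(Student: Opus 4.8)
The plan is to show that the meet and join of two positive torsion pairs, when computed in $\tors\D^{[-d+2,0]}(\mmod\La)$ via the explicit formulas of Theorem \ref{torslattice}, are again positive. It then follows formally that $\ptors\D^{[-d+2,0]}(\mmod\La)$ is closed under the meet and join of $\tors\D^{[-d+2,0]}(\mmod\La)$, hence is a sublattice, and in particular a lattice with these induced operations.

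So let $(\T_1,\F_1)$ and $(\T_2,\F_2)$ be positive torsion pairs. By Theorem \ref{torslattice}, their meet is the torsion pair $(\T_1\cap\T_2,(\T_1\cap\T_2)^\perp)$. To check that it is positive I would use the equivalence of conditions (1) and (3) in Lemma \ref{p-implies-s}: it suffices to verify that $\tau_{\geq -d+2}C(f)\in\T_1\cap\T_2$ for every morphism $f\colon X\to Y$ with $X,Y\in\T_1\cap\T_2$. Such an $f$ is in particular a morphism between objects of $\T_1$, so positivity of $(\T_1,\F_1)$ gives $\tau_{\geq -d+2}C(f)\in\T_1$; symmetrically $\tau_{\geq -d+2}C(f)\in\T_2$; hence $\tau_{\geq -d+2}C(f)\in\T_1\cap\T_2$, and Lemma \ref{p-implies-s} yields that the meet is positive.

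Dually, the join of $(\T_1,\F_1)$ and $(\T_2,\F_2)$ is $(\prescript{\perp}{}{(\F_1\cap\F_2)},\F_1\cap\F_2)$, and I would check positivity using the equivalence of conditions (1) and (4) in Lemma \ref{p-implies-s}: for every morphism $g\colon X\to Y$ with $X,Y\in\F_1\cap\F_2$, viewing $g$ as a morphism between objects of $\F_i$ and applying positivity of $(\T_i,\F_i)$ gives $\tau_{\leq 0}\Sigma^{-1}C(g)\in\F_i$ for $i=1,2$, hence $\tau_{\leq 0}\Sigma^{-1}C(g)\in\F_1\cap\F_2$. By Lemma \ref{p-implies-s} the join is therefore positive as well, which completes the argument.

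I do not anticipate a genuine obstacle here: the essential observation is that the cone criteria in Lemma \ref{p-implies-s} are ``internal'' to the torsion class (respectively the torsion-free class), depending only on morphisms with both endpoints in that class, and hence are stable under intersection; and that the torsion pairs produced by Theorem \ref{torslattice} genuinely lie in $\tors\D^{[-d+2,0]}(\mmod\La)$, which is what permits invoking Lemma \ref{p-implies-s} for them. The only minor point to be careful about is to read ``morphism $f\in\T$'' in Lemma \ref{p-implies-s} as ``morphism between two objects of $\T$'', so that the reductions above are literally valid.
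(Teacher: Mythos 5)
Your proposal is correct and is essentially the paper's argument: the paper's (very terse) proof likewise observes that the cone criteria (3) and (4) of Lemma \ref{p-implies-s} are internal to the torsion class and torsion-free class respectively, hence stable under intersection, and combines this with the explicit meet and join formulas of Theorem \ref{torslattice}. Your reading of ``morphism $f\in\T$'' as a morphism between two objects of $\T$ matches the intended meaning in the proof of that lemma.
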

    \begin{proof}
        Using the characterization of positive torsion pairs in Lemma \ref{p-implies-s}, we get that positive torsion classes and positive torsion-free classes are stable under intersections.
    \end{proof}
    \begin{corollary}\label{hcotorslattice}
        The subposet $\hcotors\K^{[-d+1,0]}(\proj\La)$ is a lattice.
    \end{corollary}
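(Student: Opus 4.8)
The plan is to show that $\hcotors\K^{[-d+1,0]}(\proj\La)$ is in fact a \emph{sublattice} of the lattice $\cotors\K^{[-d+1,0]}(\proj\La)$ provided by Theorem \ref{cotorslattice}. Equivalently, one could just transport Theorem \ref{ptorslattice} along the isomorphism $\Phi$ of Theorem \ref{bijections}, but I will give the self-contained argument first and record the formal one as an alternative.

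Write $\C := \K^{[-d+1,0]}(\proj\La)$ and fix two hereditary cotorsion pairs $(\X_1,\Y_1)$ and $(\X_2,\Y_2)$ in $\C$. By Theorem \ref{cotorslattice}, their meet in $\cotors\C$ is the cotorsion pair $(\prescript{\perp_1}{}{(\Y_1\cap\Y_2)},\, \Y_1\cap\Y_2)$ and their join is $(\X_1\cap\X_2,\, (\X_1\cap\X_2)^{\perp_1})$, so it is enough to verify that both of these pairs are again hereditary. For the meet, recall from Lemma \ref{s-implies-h} that each $\Y_i$ is closed under cones; since an intersection of subcategories that are each closed under cones is again closed under cones, $\Y_1\cap\Y_2$ is closed under cones, and hence the meet is hereditary by Lemma \ref{s-implies-h}. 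For the join, each $\X_i$ is closed under cocones by Lemma \ref{s-implies-h}, hence so is $\X_1\cap\X_2$, and therefore the join is hereditary as well.

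This shows that $\hcotors\C$ is closed under the meet and join of $\cotors\C$, so it is a sublattice, and in particular a lattice. There is essentially no obstacle: the only point requiring attention is that the explicit meet and join formulas of Theorem \ref{cotorslattice} are compatible with the characterization of heredity in Lemma \ref{s-implies-h}, and closure under (co)cones is obviously inherited by intersections. The purely formal route yields the same conclusion: by Theorem \ref{bijections}, $\Phi$ restricts to a poset isomorphism $\hcotors\C \xrightarrow{\sim} \ptors\D^{[-d+2,0]}(\mmod\La)$, the target is a lattice by Theorem \ref{ptorslattice}, and a poset isomorphism preserves all existing meets and joins, so $\hcotors\C$ is a lattice.
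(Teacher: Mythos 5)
Your proposal is correct. The paper treats this statement as an immediate corollary of Theorem \ref{ptorslattice}: the isomorphism $\Phi$ of Theorem \ref{bijections} identifies $\hcotors\K^{[-d+1,0]}(\proj\La)$ with $\ptors\D^{[-d+2,0]}(\mmod\La)$, which is a sublattice of $\tors\D^{[-d+2,0]}(\mmod\La)$; this is exactly your second, ``formal'' route. Your primary argument is genuinely different and entirely intrinsic to the homotopy-category side: you take the explicit meet and join formulas from Theorem \ref{cotorslattice}, note that the meet has torsion part $\Y_1\cap\Y_2$ and the join has cotorsion-free part $\X_1\cap\X_2$, and observe that closure under cones (resp.\ cocones) passes to intersections, so heredity is preserved by Lemma \ref{s-implies-h}. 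This is valid --- the lemma applies because Theorem \ref{cotorslattice} already guarantees that the meet and join are cotorsion pairs in $\K^{[-d+1,0]}(\proj\La)$ --- and it is in effect the mirror image of the paper's proof of Theorem \ref{ptorslattice}, carried out for cotorsion pairs instead of being transported from the torsion side. What the direct argument buys is independence from the truncation machinery (and it would generalize to any extriangulated category in which the analogue of Lemma \ref{s-implies-h} holds); what the paper's route buys is brevity, since everything needed is already proved.
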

    \section{Examples}
    \begin{example}\label{nsemid}
        Let $Q=1\to 2$ and $\La=KQ$. Then the Hasse diagram of the poset $3$-$\silt\La$ is shown below.
        \begin{center}
        \begin{tikzcd}
        &  & P_1\oplus P_2  &  \\ & S_1\oplus P_1 \arrow[ru]   & &  \\ \Sigma S_1\oplus P_1 \arrow[ru]  & S_1 \oplus \Sigma P_2\arrow[u]& & P_2\oplus \Sigma P_1 \arrow[luu] \\ & \Sigma P_2 \oplus \Sigma P_1\arrow[rru] \arrow[u] &  &  \\ \Sigma^2 P_2\oplus S_1 \arrow[ruu] & \Sigma S_1\oplus \Sigma P_1\arrow[u] \arrow[luu] &  & P_2\oplus \Sigma^2 P_1 \arrow[uu]   \\   & \Sigma^2 P_2\oplus\Sigma S_1 \arrow[u] \arrow[lu]  &  & \Sigma P_2\oplus \Sigma^2 P_1 \arrow[u] \arrow[lluu] \\  &   & \Sigma^2 P_2\oplus \Sigma^2 P_1 \arrow[lu] \arrow[ru] & 
        \end{tikzcd}
        \end{center}
        Since $\D^{[-1,0]}(\mmod\La)$ has finitely many indecomposables, this is also the lattice of positive torsion classes in $\D^{[-1,0]}(\mmod\La)$. This example also shows that, unlike the case of (positive) torsion classes in a module category (\cite[Theorem~3.1]{DIRRT}), the lattice of positive torsion classes in $\D^{[-d,0]}(\mmod\La)$ is not necessarily semidistributive. 
    \end{example}
    \begin{example}
        Let $Q$ be the quiver \begin{tikzcd}
        1 \arrow[r, "\alpha", bend left] & 2 \arrow[l, "\beta", bend left]
        \end{tikzcd} and $I=\langle \alpha\beta,\beta\alpha\rangle$. Then the Hasse diagram of the poset $3\mbox{-}\silt kQ/I$ is as follows.
        \begin{center}
        \begin{tikzcd}
        & &\substack{0\to 0\to P_1\\ \oplus\\ 0\to 0\to P_2}&  & \\
        &\substack{0\to 0\to P_1\\ \oplus\\ 0\to P_2\to P_1}\arrow[ru] & & \substack{0\to P_1\to P_2\\ \oplus\\ 0\to 0\to P_2}\arrow[lu] & \\
        \substack{P_2\to P_1\to 0\\ \oplus\\ 0\to 0\to P_1} \arrow[ru] & \substack{0\to P_2\to 0\\ \oplus\\ 0\to P_2\to P_1} \arrow[u] & & \substack{0\to P_1\to 0\\ \oplus\\ 0\to P_1\to P_2} \arrow[u]  & \substack{P_1\to P_2\to 0\\ \oplus\\ 0\to 0\to P_2} \arrow[lu] \\
        &  & \substack{0\to P_1\to 0\\ \oplus\\ 0\to P_2\to 0}\arrow[llu, dashed] \arrow[rru, dashed] && \\
        \substack{0\to P_1\to 0\\ \oplus\\ P_2\to P_1\to 0} \arrow[uu] & \substack{P_2\to 0\to 0\\ \oplus\\ 0\to P_2\to P_1}\arrow[ru] \arrow[uu] & & \substack{P_1\to 0\to 0\\ \oplus\\ 0\to P_1\to P_2} \arrow[lu] \arrow[uu] & \substack{0\to P_2\to 0\\ \oplus\\ P_1\to P_2\to 0}\arrow[uu]\\
        & \substack{P_2\to 0\to 0\\ \oplus\\ P_2\to P_1\to 0} \arrow[u] \arrow[lu]  && \substack{P_1\to 0\to 0\\ \oplus \\ P_1\to P_2\to 0} \arrow[u] \arrow[ru]  & \\
        & & \substack{P_1\to 0\to 0\\ \oplus \\ P_2\to 0\to 0 }\arrow[lu] \arrow[ru]&  &                          \end{tikzcd}
        \end{center}
    \end{example}
    We also show that the class of $s$-torsion pairs can be strictly contained in the class of positive torsion pairs.
    \begin{example}
        Let $Q=\begin{tikzcd}
1 \arrow[r, shift left] \arrow[r, shift right] & 2
\end{tikzcd}$ be the Kronecker quiver. Let $\mathcal{P}$ and $\mathcal{I}$ denote the preprojective and the postinjective component of the AR-quiver of $KQ$. The positive torsion classes in $\C=\D^{[-1,0]}(\mmod\La)$ are given by the additive subcategories generated by the following sets. 
\begin{enumerate}
    \item Any final part of $\Sigma\mathcal{I}$.
    \item Shift of a subset of the tubes and $\Sigma\mathcal I$.
    \item Any final part of $\mathcal I\cup\Sigma\mathcal P$, the shift of all tubes, and $\Sigma\mathcal I$.
    \item $\Sigma S_2$.
    \item $M$ and the final part of $\Sigma\mathcal{I}$ starting at $\Sigma M$, $M\in\mathcal I$.
    \item Any subset $S$ of the tubes, the shift of any subset $S'$ containing $S$, and $\mathcal{I}$.
    \item Any subset of the tubes, shifts of all tubes, and $\mathcal I\cup\Sigma\mathcal P\cup\Sigma\mathcal{I}$.
    \item Any final subset of $\mathcal{P}$, all tubes and their shifts, $\Sigma\mathcal{P}\cup\mathcal{I}\cup\Sigma\mathcal{I}$.
    \item $S_2,\Sigma S_2$.
    \item $M$ and the final part of $\Sigma\mmod\La$ starting at $\Sigma M$, $M\in\mathcal{P}$. 
\end{enumerate}
The Hasse diagram of the lattice $\ptors\C$ is depicted in the following picture.
\begin{center}
    \begin{tikzcd}
    &  & &   & \C   &               &                                               \\   &     &                              &                                                                 & & {\langle \mathrm{ind}\ \C\setminus[01]\rangle} \arrow[lu, no head]           &   \\ & &                              &  &                                                                        & \vdots \arrow[u, no head]                                         &  \\ & &                              & && \fbox{$\mathbb{P}(K\cup\infty)$} \arrow[u, no head]                       &   \\{\langle[01],\overline{\Sigma[01]}\rangle} \arrow[rrrruuuu, no head]   & {\color{red}\bullet\ \bullet\ \bullet} \arrow[rrrruu, no head] \arrow[rrrruuu, no head] &                              & && \vdots \arrow[u, no head]&\\&  & {\color{blue}\circ\ \circ\ \circ} \arrow[rrru, no head] & {\langle[10],\overline{\Sigma[10]}\rangle} \arrow[rru, no head] & & \langle\Sigma\mmod\La\rangle \arrow[u, no head]                   & \fbox{$\mathbb{P}(K\cup\infty)$}\arrow[luu, no head] \\ {\langle[01],\Sigma[01]\rangle} \arrow[uu, no head] &&  & & & \vdots \arrow[u, no head]  & \\{\langle\Sigma[01]\rangle} \arrow[rrrrruu, no head] \arrow[u, no head] &            & &   &  & \fbox{$\mathbb{P}(K\cup\infty)$} \arrow[u, no head] \arrow[ruu, no head] &\\
    &&& & & \vdots \arrow[u, no head]&         \\ &  &   &  &    & {\langle\Sigma[21],\Sigma[10]\rangle} \arrow[u, no head]          &\\&                                &                              & && {\langle\Sigma[10]\rangle} \arrow[u, no head]                     &\\ &                                &                              &  & 0 \arrow[lllluuuu, no head] \arrow[ru, no head] \arrow[luuuuuu, no head] \arrow[lluuuuuu, no head] \arrow[llluuuuuuu, no head] \arrow[lllluuuuuuu, no head] & &                                             
\end{tikzcd}
\end{center}
Here the $\color{red}\bullet$ points are indexed by modules in the preprojective component, and the $\color{blue}\circ$ points are indexed by modules in the postinjective component. For an object $M\in \mathcal{P}\cup\mathcal{I}$, the notation $\overline{\Sigma M}$ is used to denote the set of objects in the final part of the AR quiver starting at $\Sigma M$. Finally, $\mathbb{P}(K\cup\infty)$ represents the boolean lattice of subsets of $K\cup\infty$, and $L$ denotes the lattice of pairs of subsets $(S,S')$ of $K\cup\infty$ with $S\subseteq S'$.

Note that the positive torsion class $\T=\mathrm{add}\{\mathcal{S}\cup \Sigma \mathcal{S}\cup \Sigma\mathcal{I}\}$, where $\mathcal{S}$ is a proper subset of the tubes, is not an $s$-torsion class as the objects in $\mathcal{I}$ do not admit a right $\T$-approximation.  
\end{example}

\section{\texorpdfstring{$d$}{d}-torsion classes in \texorpdfstring{$d$}{d}-cluster tilting subcategories}\label{cluster-tilting}
In this section, we will recall the definitions of a $d$-cluster tilting subcategory $\mathcal{M}$ in $\mmod\La$ and $d$-torsion classes in $\M$. These have been studied in \cite{AHJKPT2} where the authors show the existence of an injective map from functorially finite $(d-1)$-torsion classes to $d$-term silting complexes. 

\begin{definition}
    A functorially finite subcategory $\M\subseteq\mmod\La$ is called $d$-cluster tilting if 
    \begin{enumerate}
        \item $\M=\{X\in\mmod\La\mid \Ext^i(X,\M)=0\ \forall\  i=1,\cdots,d-1\}$
        \item $\M=\{X\in\mmod\La\mid \Ext^i(\M,X)=0\ \forall\  i=1,\cdots,d-1\}$
    \end{enumerate}
\end{definition}
\begin{definition}
    Let $\M\subseteq\mmod\La$ be a $d$-cluster tilting subcategory. A subcategory $\U\subseteq\M$ is called a \emph{$d$-torsion class} if for every $M\in\M$, there exists an exact sequence $$0\to U_\M\to M\to V_1\to \cdots\to V_d\to 0$$ with $V_i\in\M$ for all $i=1,\cdots,d$ such that 
    \begin{enumerate}
        \item $U_M\in \U$;
        \item For all $U\in U$, the sequence $$0\to \Hom(U,V_1)\to \cdots\to \Hom(U,V_d)\to 0$$ is exact. 
    \end{enumerate}
\end{definition}
\begin{theorem}\cite[Theorem~8.1, Proposition~8.16]{AHJKPT2}
There exists an injective map 
\begin{equation*}
    \left\{ \parbox{11em}{\centering functorially finite $(d-1)$-torsion classes in $\M$} \right\} \xhookrightarrow{} \left\{\parbox{11em}{\centering $d$-term silting complexes in $\K^b(\proj\La)$}\right\}
\end{equation*}
that sends $\U\mapsto P^{M_\U}\oplus P_{\U}[d-1]$, where $M_\U$ is the basic $\Ext^{d-1}$-projective generator of $\U$, $P^{M_\U}$ is its minimal projective $(d-1)$-presentation, and $P_\U$ is the maximal basic projective $\La$-module such that $\Hom(P_\U,\U)=0$. 

Moreover, the silting complex $P^\bullet_\U:=P^{M_\U}\oplus P_{\U}[d-1]$ is \emph{inner acyclic}, i.e., $\Ho^i(P^\bullet_\U)=0$ for all $i\neq 0,-d+1$.
\end{theorem}

Using the dual of Theorem \ref{main}, we know that $d$-term silting objects are in bijection with functorially finite positive torsion-free classes in $\D^{[-d+2,0]}(\mmod\La)$. We want to study the composition of the above inclusion with this bijection. We need the following lemma. 

\begin{lemma}
Let $(\T,\F)$ be a torsion pair in $\D^{[-d+2,0]}(\mmod\La)$. Then $\F\,\cap\,\mmod\La$ is a torsion-free class in $\mmod\La$, and $\T\,\cap\,(\mmod\La[d-2])$ is a torsion class in $\mmod\La[d-2]$.
\end{lemma}
\begin{proof}
We only prove the torsion-free case. The torsion case is dual. 

We claim that $\F\cap\mmod\La=\{X\in \mmod\La\mid \Hom(\tau_{\geq 0}\T,X)=0\}$. Indeed, for $X\in \F\cap\mmod\La$ and $Y\in \T$, applying $\Hom(-,X)$ to the triangle $$Y\to \tau^{\geq 0}Y\to \Sigma\tau_{\leq -1}Y,$$ we get the exact sequence $0=\Hom(\Sigma\tau_{\leq -1}Y, X)\to \Hom(\tau^{\geq 0}Y,X)\to \Hom(X,Y)=0$. Thus, $\Hom(\tau^{\geq 0}Y,X)=0$. On the other hand, if there exists $X\in \mmod\La$ such that $\Hom(\tau_{\geq 0}\T,X)=0$, then $\Hom(\T,X)=0$. This is because for $Y\in \T$, we have the triangle $\tau_{\leq -1}Y\to Y\to \tau^{\geq 0}Y$. Since $\Hom(\tau_{\leq -1}Y,X)=0=\Hom(\tau^{\geq 0}Y,X)$, we get that $\Hom(Y,X)=0$. Thus, $X\in \T^\perp=\F$.   

Thus, we showed that $\F\cap\mmod\La$ is the right orthogonal of some subcategory of $\mmod\La$. Hence, it is a torsion-free class. 
\end{proof}

\begin{proposition}
   The following diagram commutes.
   \begin{center}
       \begin{tikzcd}[column sep=large]
       \left\{ \parbox{12em}{\centering functorially finite $(d-1)$-torsion classes in $\M$} \right\}\arrow[r, hook, "\U\mapsto P^\bullet_\U"]\arrow[d, "\U\mapsto \U^\perp"]& \left\{\parbox{11em}{\centering $d$-term silting complexes in $\K^b(\proj\La)$}\right\}\arrow[d, "\phi"', "\sim" sloped]\\ 
       \left\{ \parbox{11em}{\centering torsion-free classes in $\mmod\La$} \right\} &\left\{\parbox{11em}{\centering functorially finite positive torsion-free classes in $\D^{[-d+2,0]}(\mmod\La)$}\right\}\arrow[l,"\F\cap\mmod\La\,\mapsfrom\,\F"]
       \end{tikzcd}
   \end{center}
\end{proposition}
\begin{proof}
Let $\U$ be a functorially finite $(d-1)$-torsion class in $\M$. Then $P^\bullet_\U=P^{M_\U}\oplus P_{\U}[d-1]$. By definition, $\phi(P^\bullet_\U)=\{X\in \D^{[-d+2,0]}(\mmod\La)\mid \Hom(P^\bullet_\U,\Sigma^iX)=0 \ \forall \ i\leq 0\}$. We want to show that $\phi(P^\bullet_\U)\cap \mmod\La=\U^\perp$. 

Let $X\in \U^\perp$. Since $X\in \mmod\La$, $\Hom(P_\U[d-1],X)=0=\Hom(P^{M_\U},\Sigma^iX)$ for all $i<0$. Note that $P^{M_\U}=t_{\geq -d+1}M^\bullet$, where $M^\bullet$ is a minimal projective resolution of $M_\U$. Using the triangle $t_{\geq -d+1}M^\bullet\to M^\bullet\to t_{\leq -d}M^\bullet$, since $\Hom(\Sigma^{-1}t_{\leq -d}M^\bullet, X)=0=\Hom(M^\bullet,X)$, we get that $\Hom(t_{\geq -d+1}M,X)=0$. Thus $\Hom(P^\bullet_\U,X)=0$, and $X\in\phi(P^\bullet_\U)\cap \mmod\La$. 

Now suppose that $X\in \phi(P^\bullet_\U)\cap \mmod\La$ and $A\in \U$. Since $A\in \mmod\La$, there exists an epimorphism $g:\Lambda^n\to A$ for some $n\geq 1$. Using \cite[Theorem~4.6]{AHJKPT2}, there exists a minimal $\U$-coresolution of $\Lambda$ $$0\to \Lambda\xrightarrow{f}U^0\to U^1\to\cdots\to U^{d-1}\to 0$$ with $U^i$ being $\Ext^{d-1}$-projective in $\U$ for $i=0,\cdots,d-1$. Since $f$ is a left $\U$-approximation of $\La$, there exists an epimorphism $h: (U^0)^n\to A$ such that the following triangle commutes. 
\begin{center}
\begin{tikzcd}
\Lambda^n \arrow[rd, "g"', two heads] \arrow[r, "f\oplus\cdots\oplus f"] & (U^0)^n \arrow[d, "h", two heads, dashed] \\  & A                        \end{tikzcd}    
\end{center}
Since $X\in \phi(P^\bullet_\U)\cap \,\mmod\La$, $\Hom(P^{M_\U}, X)=0$. Again using the triangle $t_{\geq -d+1}M^\bullet\to M^\bullet\to t_{\leq -d}M^\bullet$, we get that $\Hom(M^\bullet,X)=0$ since $\Hom(t_{\geq -d+1}M^\bullet,X)=0=\Hom(t_{\leq -d}M^\bullet,X)$. Since $M^\bullet\cong M_\U$ is the $\Ext^{d-1}$-projective generator of $\U$, $U^0\in \add M_\U$. Thus, $\Hom(U^0, X)=0$. Since $A$ is a quotient of $(U^0)^n$, we get that $\Hom(A,X)=0$. Hence, $X\in \U^\perp$. 
\end{proof}
Note that the torsion class $\T$ corresponding to $\U^\perp$ is the smallest torsion class in $\mmod\La$ containing $\U$, and hence satisfies $\T\cap \M=\U$ (\cite[Theorem~1.1]{AJST}). 
\printbibliography
\Address

\end{document}